\documentclass[10pt]{amsart}
\usepackage{amsmath}
\usepackage{amssymb}
\usepackage{amscd}
\usepackage{tikz-cd}
\usepackage[all]{xy}
\usepackage{xcolor}
\newcounter{TmpEnumi}
\numberwithin{equation}{section}

\def\today{\number\day\space\ifcase\month\or   January\or February\or
   March\or April\or May\or June\or   July\or August\or September\or
   October\or November\or December\fi\   \number\year}

\theoremstyle{definition}
\newtheorem{thm}{Theorem}[section]
\newtheorem{lem}[thm]{Lemma}
\newtheorem{prp}[thm]{Proposition}
\newtheorem{dfn}[thm]{Definition}
\newtheorem{cor}[thm]{Corollary}

\newtheorem{rmk}[thm]{Remark}
\newtheorem{ntn}[thm]{Notation}
\newtheorem{exa}[thm]{Example}

\newcommand{\beq}{\begin{equation}}
\newcommand{\eeq}{\end{equation}}
\newcommand{\beqr}{\begin{eqnarray*}}
\newcommand{\eeqr}{\end{eqnarray*}}
\newcommand{\bal}{\begin{align*}}
\newcommand{\eal}{\end{align*}}
\newcommand{\bei}{\begin{itemize}}
\newcommand{\eei}{\end{itemize}}

\newcommand{\af}{\alpha}

\newcommand{\gm}{\gamma}
\newcommand{\dt}{\delta}
\newcommand{\ep}{\varepsilon}
\newcommand{\zt}{\zeta}
\newcommand{\et}{\eta}

\newcommand{\io}{\iota}
\newcommand{\te}{\theta}
\newcommand{\ld}{\lambda}
\newcommand{\sm}{\sigma}
\newcommand{\kp}{\kappa}
\newcommand{\ph}{\varphi}
\newcommand{\ps}{\psi}
\newcommand{\rh}{\rho}
\newcommand{\om}{\omega}

\newcommand{\Q}{{\mathbb{Q}}}
\newcommand{\Z}{{\mathbb{Z}}}
\newcommand{\R}{{\mathbb{R}}}
\newcommand{\C}{{\mathbb{C}}}
\newcommand{\N}{{\mathbb{Z}}_{> 0}}
\newcommand{\Nz}{{\mathbb{Z}}_{\geq 0}}

\newcommand{\id}{{\operatorname{id}}}

\newcommand{\sint}{{\operatorname{int}}}

\newcommand{\spec}{{\operatorname{sp}}}

\newcommand{\supp}{{\operatorname{supp}}}

\newcommand{\Aut}{{\operatorname{Aut}}}
\newcommand{\Ad}{{\operatorname{Ad}}}
\newcommand{\Aff}{{\operatorname{Aff}}}

\newcommand{\mdim}{{\operatorname{mdim}}}

\newcommand{\Cu}{{\operatorname{Cu}}}
\newcommand{\W}{{\operatorname{W}}}

\newcommand{\cZ}{{\mathcal{Z}}}

\newcommand{\Mi}{M_{\infty}}

\newcommand{\andeqn}{\qquad {\mbox{and}} \qquad}

\newcommand{\ds}[1]{{\displaystyle{#1}}}

\newcommand{\Wolog}{Without loss of generality}

\newcommand{\ifo}{if and only if}

\newcommand{\ca}{C*-algebra}
\newcommand{\uca}{unital C*-algebra}

\newcommand{\hm}{homomorphism}

\newcommand{\tst}{tracial state}
\newcommand{\hsa}{hereditary subalgebra}

\newcommand{\pj}{projection}

\newcommand{\nzp}{nonzero projection}

\newcommand{\ct}{continuous}
\newcommand{\cfn}{continuous function}

\newcommand{\cms}{compact metric space}

\newcommand{\chs}{compact Hausdorff space}
\newcommand{\hme}{homeomorphism}
\newcommand{\mh}{minimal homeomorphism}

\renewcommand{\S}{\subseteq}
\newcommand{\ov}{\overline}
\newcommand{\SM}{\setminus}
\newcommand{\I}{\infty}
\newcommand{\E}{\varnothing}
\newcommand{\Kt}{K \otimes}

\newcommand{\set}[1]{\left\{ #1 \right\}}
\newcommand{\cc}[1]{\langle #1 \rangle}
\newcommand{\inn}[1]{\left\langle #1 \right\rangle}
\newcommand{\ccA}[1]{\langle #1 \rangle_A}
\newcommand{\ccB}[1]{\langle #1 \rangle_B}
\newcommand{\ten}{\otimes}
\newcommand{\cuntz}{\precsim}
\newcommand{\JS}{\mathcal{Z}}
\newcommand{\les}{<_{\operatorname{s}}}
\newcommand{\ccAf}[1]{\langle #1 \rangle_{A^{\alpha}}}
\newcommand{\RD}{{\operatorname{dim}}_{\operatorname{Rok}}^{\operatorname{c}}}

\title[Pureness of Crossed Products]{Pureness of Certain Crossed Product C*-Algebras}

\author[D.\ Archey, J.\ Buck, J.\ Mohammadkarimi, N.C.\ Phillips
and A.\ Seth]{Dawn Archey, Julian Buck, Javad Mohammadkarimi,
N.~Christopher Phillips, and Apurva Seth}

\address{Dawn~Archey, University of Detroit Mercy, Department of Mathematics, 4001 W. McNichols Rd., Detroit MI 48221, USA.}
\email{archeyde@udmercy.edu}

\address{Julian~Buck, Department of Mathematics, Okanagan College, 1000 KLO Road, Kelowna BC, V1Y 4X8 Canada.}
\email{jbuck@okanagan.bc.ca}

\address{Javad~Mohammadkarimi, Research Center for Operator Algebras, East China Normal University,  Shanghai
200062, China.}   
\email{javad.mohammadkarimi1369@gmail.com}

\address{N.~Christopher Phillips, Department of Mathematics, University  of Oregon,
	Eugene OR 97403-1222, USA.}

\address{Apurva~Seth, Mathematical Institute, University of Oxford, Radcliffe Observatory, Andrew Wiles Building, Woodstock Rd, Oxford OX2 6GG, United Kingdom.}
\email{apurvaseth14@gmail.com}  

\keywords{Pureness, divisibility, comparison, Large subalgebras, automorphism lies over a homeomorphism, the tracial Rokhlin property, finite Rokhlin dimension}

\date{10~April 2026}

\subjclass[2000]{Primary 46L55;
 Secondary 19K14, 46L05, 46L40, 46L80, 46L85.}
\thanks{ The third author was partially supported by the Research Center for Operator Algebras and by the Science and Technology Commission of Shanghai Municipality
(No. 22DZ2229014) at East China Normal University and the fifth author was supported by the Engineering and Physical Sciences Research Council (EP/X026647/1). For the purpose of Open Access, the authors have applied a CC BY public copyright license to any Author Accepted Manuscript (AAM) version arising from this submission.}

\begin{document}

\maketitle

\begin{abstract}
We establish comparison and divisibility properties for
crossed product C*-algebras arising from automorphisms of algebras
$C (X, D)$ which lie over minimal homeomorphisms,
from actions of compact groups which have
finite Rokhlin dimension with commuting towers,
and from actions of compact groups which have the
restricted tracial Rokhlin property with comparison.
We deduce that these crossed products we
consider are pure, and conclude they have stable rank one,
and in certain cases have real rank zero.
We give examples in which these properties
do not follow from previous results,
in the case of $C (X, D)$
due to the lack of $\JS$-stability of~$D$, the
underlying topological spaces not being finite dimensional, or both.
\end{abstract}

\section{Introduction}

Significant progress has been made on the classification of
crossed product \ca{s} arising from minimal dynamical systems.
The unpublished long preprint \cite{QLinPhDiff} of Q.~Lin
and N.~C.\  Phillips (see also the survey articles
\cite{QLinPh1} and \cite{QLinPh2}) provides a
description of the transformation group
\ca{s} arising from minimal diffeomorphisms of
finite dimensional smooth compact manifolds in
terms of a direct limit decomposition.
In \cite{HLinPh}
and \cite{TomsWinter}, it is shown that crossed
products arising from minimal homeomorphisms
of infinite compact metrizable spaces with finite covering
dimension are classified by their ordered K-theory in the
presence of sufficiently many projections (for instance,
when projections separate traces).
In \cite{TomsWinter}
it is further proved that crossed products by such minimal
homeomorphisms have finite nuclear dimension, and
hence absorb the Jiang-Su algebra $\cZ$ tensorially
(that is, are $\cZ$-stable).
Finally, G.~A.\  Elliott and
Z.~Niu (\cite{EllNiu}) have shown that crossed products
by minimal homeomorphisms of compact metric spaces
with mean dimension zero (including all minimal
homeomorphisms of finite dimensional compact
metric spaces) are ${\mathcal{Z}}$-stable, from which
it follows that they are classifiable in the sense of the
Elliott program by Corollary D of Theorem A of \cite{CETWW}.

There is also substantial work on crossed products
$C^* (\Z, A, \af)$ when $A$ is simple.

Much less is known when the \ca~$A$ is
neither commutative nor simple.
The easiest more general
case to consider is presumably $A = C (X, D)$, in which $X$
is a \cms{} and $D$ is a simple unital \ca.
In this case, every
automorphism of~$A$ ``lies over'' a \hme~$h$ of~$X$.
(See \cite{ArBcPh2}.)
We want the crossed product to be simple,
and this happens \ifo{} $h$ is minimal.
Hua (\cite{Hua}) has
shown that such crossed products have tracial rank zero
when moreover $X$ is the Cantor set, $D$ has tracial rank
zero, and some additional K-theoretic assumptions are made.
This case was more systematically studied in \cite{ArBcPh2},
in which several results were given showing that if $D$ has
good properties (for example, has stable rank one, has strict
comparison of positive elements, or is tracially $\cZ$-stable),
and other conditions are satisfied, then $C^* (\Z, C (X, D), h)$
has good properties.
In particular, by \cite[Theorem 4.5]{ArBcPh2},
if $D$ is tracially $\cZ$-stable, then so is $C^* (\Z, C (X, D), h)$,
even if $\mdim (h) > 0$.
If $D$ is nuclear, it then follows from
\cite[Theorem~4.1]{HirOr} that $C^* (\Z, C (X, D), h)$ is $\cZ$-stable.
Therefore $C^* (\Z, C (X, D), h)$ is classifiable
in the sense of the Elliott program.
However, there are many
interesting examples of crossed products of this sort in which
$D$ is not nuclear and not even tracially $\cZ$-stable.
Many examples are given in \cite[Section~6]{ArBcPh2}.
Many of the results there are limited to the case
$\dim (X) \leq 1$ or even $\dim (X) = 0$, and are
therefore somewhat artificial.
The reason is lack of
sufficient information about algebras of the form $C (Y, D)$,
or recursive subhomogenous algebras over~$D$, and their
direct limits, when the dimensions of the spaces are greater
than~$1$.

This is where pure C*-algebras come into play.
Following Winter \cite[Definition 3.6]{Wint2012}, a C*-algebra is said to be pure if it has strict comparison and is
almost divisible.
More generally, weaker notions of comparison and divisibility, namely $m$-comparison and $n$-almost divisibility, suffice: if $A$ has $m$-comparison and is $n$-almost divisible for some $m,n \in \mathbb{N}$, then $A$ is pure by~\cite[Theorem~5.7]{AnPrTlVt}.
See Section~\ref{S_Prelims} for the relevant definitions and further details.
We warn that $m$-comparison as used here is a much weaker condition
than the property
with the same name used to define the radius of comparison,
as for example in \cite[Definition 6.1]{Toms06}.
However, $0$-comparison is the same for both definitions.

Pureness is closely connected to $\cZ$-stability.
By \cite{Rord2004}, $\cZ$-stability implies pureness, and for separable C*-algebras, the two notions coincide at the level of central sequence algebras.
(See \cite{PerThiVil2025}.)
In the nonnuclear setting, $\cZ$-stability is often unavailable, whereas pureness remains accessible and continues to play a useful role in establishing strict comparison.
It is therefore natural to view pureness as one plausible extension of $\cZ$-stability to the case of nonnuclear simple \ca{s}.
Two other possible extensions are $\cZ$-stability itself and (in the unital case)
tracial $\cZ$-stability as in \cite[Definition~2.1]{HirOr}.
Certainly $\cZ$-stability implies tracial $\cZ$-stability.
It seems likely that tracial $\cZ$-stability implies
pureness, although, as far as we know, nobody has
actually proved this.
For simple separable nuclear
unital \ca{s}, tracial $\cZ$-stability implies
$\cZ$-stability (\cite[Theorem~4.1]{HirOr}).
The Elliott classification conjecture would imply that
pureness implies $\cZ$-stability.
Without nuclearity,
both reverse implications are false.
It is shown in
\cite[Theorem 2.2]{NiuWng} that tracial $\cZ$-stability, even
tracial rank zero, does not imply $\cZ$-stability.
The
reduced \ca{s} of nonabelian free groups are pure but
certainly not $\cZ$-stable.
(See \cite[Theorem A]{AGKEP} and
\cite[Proposition 5.13]{AnPrTlVt}.)

Coming back to algebras of the form $C(Y, D)$, it is well known that $C(Y, D)$ need not have strict comparison, even when $D$ does.
Positive results are available under additional assumptions, for instance, when $\dim(Y)=1$.
(See \cite[Theorem 2.6]{AnBoPe}.)
However, recent work in~\cite{SethVil} shows that, under suitable divisibility assumptions on $D$, one can obtain strict comparison for $C(Y, D)$, and in fact establish pureness for arbitrary compact metric spaces $Y$.
(See  \cite[Corollary 5.9 and Theorem 5.11]{SethVil}.)

In this paper, we use these results on pureness of algebras
of the form $C (Y, D)$ to prove that,
if $D$  is pure and $h \colon X \to X$ is a \mh{} (even with
$\mdim (h) > 0$), then $C^* (\Z, C (X, D), h)$
is pure.
Since many reduced free products are pure, in
particular including $C^*_{\mathrm{r}} (F_n)$ when
$n \geq 2$ (see \cite[Theorem A]{AGKEP} and
\cite[Proposition 5.13]{AnPrTlVt}), and simple stably finite
pure \ca{s} have stable rank one (\cite[Corallary 1.3]{HLin}), we are
able to deduce pureness, in particular, strict comparison of
positive elements, as well as stable rank one, for much
more natural versions of some of the examples in
\cite[Section~6]{ArBcPh2}.
In some cases we are also
able to deduce real rank zero.

Related methods also give results for two other kinds of crossed products.
The first arises from actions of compact groups with finite Rokhlin dimension with commuting towers, and the second from actions with the restricted tracial Rokhlin property with comparison.
Actions of the first type were introduced and studied by Gardella  (\cite{Gard17}, \cite{gardella17}), generalizing the notion introduced by Hirshberg, Winter, and Zacharias for finite group actions.
Crossed products by such actions enjoy strong permanence properties:
several important classes of C*-algebras, including $\mathcal{D}$-absorbing C*-algebras (where $\mathcal{D}$ is strongly self-absorbing), stable C*-algebras, C*-algebras with finite nuclear dimension (or decomposition rank), and C*-algebras with finite stable rank (or real rank), are preserved.
(See~\cite[Theorem~4.17]{GdlHrbStg}.)
The second class was introduced in~\cite{MohPhil2021TRP} by the third and fourth named authors, where similar preservation results are obtained.
It is proved there that fixed point algebras under such actions,
and, in appropriate cases, the corresponding crossed products,
preserve properties such as simplicity, Property~(SP), tracial rank zero, tracial rank at most one, the Popa property, tracial $\mathcal{Z}$-stability, $\mathcal{Z}$-stability in the nuclear case, infiniteness, and pure infiniteness.
In~\cite{MohPhil2021TRP2} the authors construct three types of examples that have the restricted tracial Rokhlin property with comparison but do not have finite Rokhlin dimension with commuting towers.

We show that pureness is likewise preserved under crossed products in these settings.
Let $A$ be a simple separable stably finite unital \ca,
let $G$ be a second countable compact group,
and let $\af \colon G \to \Aut (A)$ be an action of $G$ on~$A$.
Suppose that $\af$ has
the restricted tracial Rokhlin property with comparison,
in the sense of \cite[Definition~2.1]{MohPhil2021TRP},
or that $G$ is a Lie group
and $\af$ has finite Rokhlin dimension with commuting towers,
in the sense of \cite[Definition~3.2]{gardella17}.
We prove that if $A$ is pure, then so is $C^* (G, A, \af)$.

The main step
for pureness of $C^* (\Z, C (X, D), h)$ is showing
that if $B$ is a large subalgebra of~$A$, in the
sense of \cite[Definition~4.1]{Phl40}, and if $B$
is pure, then $A$ is pure.
The reverse implication
is also true.
The proof depends on the fact that the
nonprojection parts of the Cuntz semigroups $\Cu (B)$ and $\Cu (A)$ are
isomorphic.
Assuming instead the restricted tracial
Rokhlin property with comparison, or finite Rokhlin
dimension with commuting towers, we compare the
fixed point algebra $A^{\af}$ with~$A$.
The
isomorphism statement above is no longer true,
but enough of it is that the same underlying ideas
can be made to work.
In all three cases, under
suitable additional assumptions, we can say
something about $m$-comparison and $n$-almost
divisibility separately, at the cost of allowing $m$
and~$n$ to increase.
Thus, the fact that
$m$-comparison and $n$-almost divisibility
imply pureness plays a key role.

The paper is organized as follows.
In Section~\ref{S_Prelims},
we establish the relevant notation, definitions, and results
regarding Cuntz subequivalence, the Cuntz semigroup, and
$n$-comparison and $m$-almost divisibility of both
semigroups and associated C*-algebras.
In particular, we
define the key property of pureness for a C*-algebra.
In
Section~\ref{Sec_6314_PLarge}, we establish relationships
between $n$-comparison and $m$-almost divisibility for
a large subalgebra and its containing algebra, and use these
to prove that if a C*-algebra contains a pure large subalgebra,
then the containing algebra is itself pure.
In Section~\ref{Sec_PureCrossProd}, we show that
$C^* \bigl( \Z, \, C (X, D), \, \af \bigr)$ is pure
when $D$ is pure and the action of $\af$ lies over a
minimal homeomorphism.
It follows that such
crossed products have stable rank one and, in certain cases,
real rank zero.
In Section~\ref{Sec_6326_FinRD}, we
establish pureness for
$C^* \bigl( G, \, A, \, \af \bigr)$ when $G$ is a compact
Lie group and $\af$ has finite Rokhlin dimension with
commuting towers.
In Section~\ref{Sec_6326_RTRP}, we
establish pureness for
$C^* \bigl( G, \, A, \, \af \bigr)$ where $G$ is a
second countable compact
group and $\af$ has the tracial Rokhlin property with
comparison.
Finally, in Section~\ref{Sec_Examples} we give
examples of crossed product C*-algebras of the type described
in Section~\ref{Sec_PureCrossProd}.
These examples are not
covered by the results and examples in \cite{ArBcPh2} due
to either the algebra $D$ in $C(X,D)$ not being $\JS$-stable,
the space $X$ not being finite dimensional, or both.
In some of our examples the underlying dynamical system need
not even have mean dimension zero.

\textbf{Acknowledgments}.
The fourth author is grateful to Hannes Thiel
for a stimulating discussion of pure \ca{s}, on the train on the
way back from the conference in Jaca (Spain)
in the summer of 2025.

\section{Preliminaries}\label{S_Prelims}

\subsection{Cuntz Subequivalence, Comparison, and
 Divisibility}\label{SubSec_CuA}

We recall some basic definitions and terminology related to
Cuntz subequivalence and the Cuntz semigroups $\W (A)$ and
$\Cu (A)$ for a C*-algebra $A$.
We include the algebra $A$ in the notation $a \precsim_A b$
and $\ccA{a}$,
because we frequently work simultaneously with both $A$
and a subalgebra $B \subseteq A$,
usually a large subalgebra in the sense of~\cite{Phl40}
or the fixed point algebra of an action of a compact group on~$A$.

\begin{dfn}\label{D_Cuntz_Subeq}
Let $A$ be a C*-algebra.
\begin{enumerate}
\item
For $a,b \in (A \otimes K)_+$, we say that $a$ is
\emph{Cuntz subequivalent} to $b$ over $A$, written
$a \cuntz_A b$, if there is a sequence $(v_{n})_{n=1}^{\infty}$
in $A \otimes K$ such that $\lim_{n \to \infty} v_n b v_n^* = a$.
\item
We say that $a$ and $b$ are \emph{Cuntz equivalent} in $A$,
written $a \sim_{A} b$, if $a \cuntz_{A} b$ and $b \cuntz_{A} a$.
This relation is an equivalence relation, and we write $\cc{a}_A$
for the equivalence class of $a$ in $A$.
When there is no
possibility of confusion, we sometimes just write
$a \cuntz b$, $a \sim b$, and $\cc{a}$.
\item
The \emph{Cuntz semigroup} of $A$ is
\[
\Cu (A) = (A \otimes K)_{+} / \sim_{A},
\]
together with the semigroup operation $\cc{a}_A + \cc{b}_A =
\cc{a \oplus b}_A$ and the partial order given by
$\cc{a}_A \leq \cc{b}_A$ if and only if $a \cuntz_{A} b$.
\item
We define a subsemigroup $\W (A)$ of $\Cu (A)$ by
\[
\W (A) = M_{\infty}(A)_{+} / \sim_{A},
\]
which is the ``classical'' Cuntz semigroup of $A$.
We also denote the class of $a$ in $\W (A)$ by~$\ccA{a}$,
except in Remark~\ref{1903_rem}
and Section~\ref{SubSec_WA}, where we use~$[a]_A$.
\end{enumerate}
\end{dfn}

\begin{rmk}\label{1903_rem}
The operations and order in $\W (A)$ are the same as in $\Cu (A)$.
By Remark~1.2 of \cite{Phl40},
if $a,b \in M_{\infty}(A)_+$, then $\cc{a}_A \leq \cc{b}_A$ in $\Cu (A)$
if and only if $[a]_A \leq [b]_A$ in $\W (A)$.
The notational distinction between $\cc{a}_A$ and $[a]_A$ will only be
used in Section~\ref{SubSec_WA}.
\end{rmk}

The semigroup $\Cu (A)$ tends to have better properties than
$\W (A)$, such as the existence of certain suprema
(Theorem~4.19 of~\cite{APT}), which will be important for
Definition~\ref{D_Cu_Semigp}, and behavior with respect to
direct limits (Theorem~4.35 of~\cite{APT}).
We work in $\Cu (A)$
when possible, but use $\W (A)$ when certain results in the
literature are formulated in those terms.

We next recall the definitions of divisibility and comparison definitions for
abstract semigroups, with the cases of interest being
$\Cu (A)$ and $\W (A)$.
Much of the notation and terminology
follows \cite{AnPrTlVt}, although many of the definitions used
are not original there.

\begin{dfn}[Before Proposition 2.3 of \cite{AnPrTlVt}]\label{LeqS}
For an ordered semigroup $S$ and $\xi,\eta \in S$, write
$\xi \les \eta$ if $(k + 1)\xi \leq k \eta$ for some $k \in \N$.
\end{dfn}

\begin{dfn}\label{nCompDefn}
An ordered semigroup $S$ has \emph{$n$-comparison} if
given $\xi, \eta_{0}, \ldots, \eta_{n} \in S$,
the relation $\xi \les \eta_{j}$
for $j = 0, 1, \ldots, n$ implies $\xi \leq \sum_{j=0}^{n} \eta_{j}$.
\end{dfn}

\begin{dfn}\label{nCompA}
Let $A$ be a C*-algebra.
We say $A$ has \emph{$n$-comparison}
if $\Cu (A)$ has $n$-comparison in the sense of Definition
\ref{nCompDefn}.
If $A$ has 0-comparison, we say that $A$ has
{\emph{strict comparison}}.
\end{dfn}

Strict comparison as in Definition~\ref{nCompA} agrees
with the usual notion of strict comparison of positive elements
by quasitraces for a C*-algebra $A$, and this property holds
if and only if $\Cu (A)$ is almost unperforated.
For details of
these relationships (and relationships with other version of
comparison), see Section~3.4 of~\cite{AnPrTlVt}.

We caution that $n$-comparison in the sense of
Definition~\ref{nCompA} is
different from $r$-comparison (for $r \in [0,\infty)$)
as used in the definition of the radius of comparison, in the sense
of Definition 6.1 of~\cite{Toms06}.

\begin{dfn}\label{D_Cu_WayBelow}
Let $S$ be an ordered semigroup, and let $\xi, \xi' \in S$.
We write $\xi' \ll \xi$ if for every increasing sequence
$(\xi_n)$ in $S$ satisfying $\xi \leq \sup_n \xi_n$, there
is $N \in \N$ such that $\xi' \leq \xi_N$.
We call $\ll$
the \emph{way below} relation (or \emph{compact containment}
relation) and say \emph{$\xi'$ is way below $\xi$}
(or \emph{$\xi'$ is compactly contained in $\xi$}).
\end{dfn}

For this definition to be useful, we need to know that
suprema of increasing sequences in the ordered semigroup
$S$ actually exist.
For the main semigroup of interest here,
this is in fact the case.

\begin{dfn}\label{D_Cu_Semigp}
An ordered semigroup $S$ is said to be a \emph{$\Cu$-semigroup}
if the following properties hold:
\begin{enumerate}
\item
If $(\xi_n)$ is an increasing sequence in $S$,
then $\sup_n \xi_n$ exists.
\item
For any $\xi \in S$ there exists a sequence $(\xi_n)$
such that $\xi_n \ll \xi_{n+1}$ for all $n$ and
$\xi = \sup_n \xi_n$.
\item
If $\xi_1 \ll \xi_2$ and $\eta_1 \ll \eta_2$ in $S$,
then $\xi_1 + \eta_1 \ll \xi_2 + \eta_2$.
\item
If $(\xi_n)$ and $(\eta_n)$ are increasing sequences
in $S$, then $\sup_n (\xi_n + \eta_n) = \sup_n \xi_n + \sup_n \eta_n$.
\end{enumerate}
\end{dfn}

The Cuntz semigroup $\Cu (A)$ is a $\Cu$-semigroup
for a C*-algebra $A$, by the theorem at the start of Section~2
of~\cite{CoElIv2008}.

\begin{dfn}\label{def_almost_divisibility}
Let $S$ be a $\Cu$-semigroup, and let $m \in \N$.
We say that $S$ is \emph{$m$-almost divisible} if
for any $\xi, \xi' \in S$ such that $\xi' \ll \xi$
and any $N \in \N$, there exists $\eta \in S$ such that
\[
N \eta \leq \xi \quad{\mbox{and}}\quad \xi' \leq
(N + 1) (m + 1) \eta.
\]
We say that $S$ is \emph{almost divisible} if $S$ is
$0$-almost divisible.
\end{dfn}

Other closely related notions of almost divisibility exist.
See the discussion after Definition~4.1 of~\cite{AnPrTlVt}
for details.

Applying Definition~\ref{def_almost_divisibility}
to $S = \Cu (A)$ for a C*-algebra $A$ generalizes
Definition~2.5(i) of~\cite{Wint2012}, as in Section~2.3 of ~\cite{RbTk}.

\begin{dfn}\label{D_AlmostDiv_Alg}
Let $A$ be a C*-algebra, and let $m \in \N$.
We say that
$A$ is \emph{$m$-almost divisible} if $\Cu(A)$ is $m$-almost
divisible in the sense of
Definition~\ref{def_almost_divisibility}.
We say that $A$ is
\emph{almost divisible} if $A$ is 0-almost divisible.
\end{dfn}

The following definition first appeared in
Definition~2.6 of~\cite{Wint2012}, with the additional
assumptions the C*-algebra involved is separable, simple, and
unital.
In Section~2.3 of ~\cite{RbTk} it is generalized to
arbitrary C*-algebras, and in
Definition~5.1 of~\cite{AnPrTlVt} to $\Cu$-semigroups.

\begin{dfn}\label{D_PureAlg}
Let $A$ be a C*-algebra and let $m,n \in \N$.
We say that
$A$ is \emph{$(n,m)$-pure} if has $n$-comparison in the
sense of Definition~\ref{nCompA} and is $m$-almost divisible
in the sense of Definition~\ref{def_almost_divisibility}.
We say that $A$ is \emph{pure} if it is $(0,0)$-pure.
\end{dfn}

Due to its importance in many of our results, we state explicitly
the following theorem of \cite{AnPrTlVt}.

\begin{thm}[Theorem 5.7 of \cite{AnPrTlVt}]\label{T_PureEquiv}
Given a C*-algebra $A$, the following are equivalent:
\begin{enumerate}
\item
$A$ is pure; that is, $A$ has strict comparison and is almost divisible.
\item
$A$ is $(n,m)$-pure for some $m,n \in \N$.
\end{enumerate}
\end{thm}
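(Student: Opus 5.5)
The implication (1)$\Rightarrow$(2) is immediate from Definition~\ref{D_PureAlg}: pure means $(0,0)$-pure. All the work is in (2)$\Rightarrow$(1). Since both properties are defined through $\Cu(A)$, I would prove the statement purely at the level of an abstract $\Cu$-semigroup $S$ with $n$-comparison and $m$-almost divisibility, using only the axioms of Definition~\ref{D_Cu_Semigp}. The plan has two stages: first upgrade $n$-comparison to strict comparison ($0$-comparison) using divisibility, then upgrade $m$-almost divisibility to almost divisibility using the comparison just obtained.

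\emph{Stage 1: strict comparison.} Let $\xi \les \eta$, say $(k+1)\xi \le k\eta$. It suffices to prove $\xi' \le \eta$ for every $\xi' \ll \xi$, and then take the supremum.
\begin{enumerate}
\item From $n$-comparison with all $\eta_j = \eta$, we get that $\xi \les \eta$ implies $\xi \le (n+1)\eta$. So $S$ has bounded perforation in this weak sense.
\item Pick $\eta' \ll \eta$ close enough that $\xi \les \eta'$ still holds. This should be possible by passing $(k+1)\xi' \le k\eta$ through a way-below approximation.
\item Apply $m$-almost divisibility to $\eta' \ll \eta$ with a large $N$. This gives $\zeta$ with $N\zeta \le \eta$ and $\eta' \le (N+1)(m+1)\zeta$.
\item Show that $\xi' \les L\zeta$ for a block size $L \approx N/(n+1)$. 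This is a counting argument: $(k+1)\xi' \le k\eta' \le k(N+1)(m+1)\zeta$, compared against $L\zeta$.
\item Apply $n$-comparison with $\eta_0 = \dots = \eta_n = L\zeta$. This gives $\xi' \le (n+1)L\zeta \le N\zeta \le \eta$.
\end{enumerate}
The delicate point is choosing $N$ and $L$ with $(n+1)L \le N$ while keeping $\xi' \les L\zeta$. The factor $(m+1)$ in the divisibility estimate threatens this. I expect to handle it by iterating: apply Stage 1 first with $k$ replaced by a multiple large enough to absorb $(m+1)(n+1)$. The point is that $\xi \les \eta$ implies $r\xi \les r\eta$ for every $r$, and it also gives $(k'+1)\xi \le k'\eta$ for all $k'$ large relative to~$k$. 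Once this works, Stage 1 yields $0$-comparison, that is, almost unperforation of $S$.

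\emph{Stage 2: almost divisibility.} Given $\xi' \ll \xi$ and $N$, choose $\xi' \ll \xi'' \ll \xi$. Apply $m$-almost divisibility to $\xi'' \ll \xi$ with a larger integer $M$ to obtain $\zeta$ with $M\zeta \le \xi$ and $\xi'' \le (M+1)(m+1)\zeta$. Set $\eta = p\zeta$ with $p = \lfloor M/N \rfloor$, so that $N\eta \le \xi$. We then need $\xi' \le (N+1)\eta$. We have $\xi' \les (N+1)p\zeta$ up to the factor $(m+1)$, so this is exactly where the almost unperforation from Stage 1 is used to remove the $(m+1)$. Here the "$\les$" input must be produced from the relation $\xi'' \le (M+1)(m+1)\zeta$, which only becomes strict after choosing $M \gg N(m+1)$.

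\emph{Main obstacle.} I expect this last inequality to be the main obstacle. Almost unperforation compares elements, but the available estimate carries a multiplicative constant $(m+1)$, not a $(1+\varepsilon)$. If direct comparison fails, I would first try to iterate $m$-almost divisibility to improve the constant: divide each piece again, and use strict comparison at each step to trade a factor $(m+1)$ for $(1+\varepsilon)$. Then I would conclude with the rational-multiple argument above.
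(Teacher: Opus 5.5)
The paper does not prove this statement; it imports it from \cite{AnPrTlVt}. Judged on its own, your implication (1)$\Rightarrow$(2) is fine, since both properties are monotone in the parameters. The plan for (2)$\Rightarrow$(1), however, has a genuine gap.

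\textbf{Stage~1 fails at steps 4--5.} From $(k+1)\xi' \le k\eta' \le k(N+1)(m+1)\zeta$ you can only deduce $\xi' \les L\zeta$ when $L > \frac{k}{k+1}(N+1)(m+1)$. Step~5, on the other hand, needs $(n+1)L \le N$. For $k \ge 1$ these two requirements are compatible only if $n = m = 0$.

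This is not a bookkeeping issue. $m$-almost divisibility only pins $\zeta$ between roughly $\eta/((N+1)(m+1))$ and $\eta/N$. Along a functional on $\Cu(A)$ where $\zeta$ sits at the lower end and $\xi$ is close to $\frac{k}{k+1}\eta$, the relation $\xi' \les L\zeta$ is simply false for every $L \le N/(n+1)$.

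Your proposed repair goes the wrong way. Replacing $k$ by a larger $k'$ pushes $k'/(k'+1)$ toward $1$, which shrinks the slack. Passing from $\xi \les \eta$ to $r\xi \les r\eta$ changes no ratios.

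The underlying obstruction is this. Every use of $n$-comparison costs a factor $n+1$, and every use of divisibility costs a factor $m+1$, while the hypothesis only offers slack $1/(k+1)$. Moreover, when there are many functionals, $\xi$ and $\eta$ have different ``shapes'', so you cannot cut both into copies of one small element and count. Steps 1--5 can therefore at best reproduce conclusions like $\xi \le (n+1)\eta$, which $n$-comparison already gives.

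\textbf{Stage~2 has the same defect.} The ratio $(M+1)(m+1)/\bigl((N+1)\lfloor M/N \rfloor\bigr)$ stays above $N(m+1)/(N+1) \ge 1$ for every $M$ when $m \ge 1$. So the relation you need never becomes strict, contrary to your claim for $M \gg N(m+1)$. Almost unperforation cannot remove a multiplicative constant $m+1 > 1$.

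\textbf{What is missing.} You need some way to apply $n$-comparison only to a part of $\xi$ that is small compared with the room still free in $\eta$, after the bulk of $\xi$ has been placed piece by piece. Only then can the factor $n+1$ be absorbed by the slack.

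Any such argument has to talk about ``what is left'' of $\xi$ and of $\eta$ once a piece has been placed or split off. It also has to certify $\les$ for these leftovers, for instance via functionals, whose additivity substitutes for the missing cancellation. Your fallback in Stage~2 (``divide each piece again'') presupposes the same thing, namely the part of $\xi$ not yet covered by $N\zeta$.

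But you deliberately restrict yourself to the four axioms of Definition~\ref{D_Cu_Semigp}, and they provide no complements at all. For $\Cu(A)$, complements come from the almost algebraic order axiom (usually called (O5)): if $x' \ll x \le y$, there is $c$ with $x' + c \le y \le x + c$. This axiom never enters your argument. As it stands, neither stage reaches its conclusion, so the proposal does not prove the theorem.
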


The following lemma will be used repeatedly.
It is Lemma~2.1 of~\cite{Phl40}, but the original source is~\cite{AkmnShlz}.
We include the argument from the proof in~\cite{Phl40}.

\begin{lem}\label{L_3618_Sp01}
Let $A$ be a simple \ca{} which is not of Type~I.
Then there exists $a \in A_{+}$ such that
$\spec (a) = [0, 1]$.
\end{lem}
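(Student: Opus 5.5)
The plan is to show first that $A$ contains a positive element with infinite spectrum, and then to rescale it so that its spectrum becomes connected, which forces the spectrum to be an interval. The reduction to the unital case is free: $\spec (a) = [0, 1]$ makes the same sense in $A$ and in its unitization, since the only possible discrepancy is the point $0$, and we will arrange that $0$ lies in the spectrum anyway.

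\textbf{Step 1: some positive element has infinite spectrum.} Suppose instead that every element of $A_{+}$ has finite spectrum. Then every hereditary subalgebra of $A$ has real rank zero, and every positive element is a finite linear combination of mutually orthogonal projections. A simple \ca{} in which every self-adjoint element has finite spectrum should be finite dimensional. One way to see this is as follows. If $A$ is infinite dimensional, we can find an infinite sequence of mutually orthogonal nonzero projections, by repeatedly splitting projections in corners $pAp$, which are also infinite dimensional by simplicity. Then $\sum_{n} 2^{-n} p_n$ converges in $A$ and has infinite spectrum $\{ 2^{-n} \} \cup \{ 0 \}$, a contradiction. A simple finite dimensional \ca{} is a matrix algebra and hence of Type~I, contrary to hypothesis.

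The only subtle point in Step~1 is splitting: a corner $pAp$ with all spectra finite must be either $\C p$ or contain a nontrivial projection. This holds because if $pAp \neq \C p$, some self-adjoint element of $pAp$ has at least two spectral points, and its spectral projections split $p$.

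\textbf{Step 2: from infinite spectrum to $[0,1]$.} Take $b \in A_{+}$ with $\| b \| = 1$ and $\spec (b)$ infinite. An infinite compact subset of $[0,1]$ has an accumulation point $t_0$. Choose a nonconstant continuous function $f \colon [0,1] \to [0,1]$ whose restriction to $\spec (b)$ still has infinite image. Then, instead of aiming directly for connectedness, use the standard trick. The compact set $K = \spec (b)$ admits a continuous surjection onto $[0,1]$ only if it is perfect-ish, so rather than surjecting, we pass to a hereditary subalgebra.

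Concretely, let $g \in C_0 \bigl( \spec (b) \SM \{ 0 \} \bigr)$, and use the fact from Akemann–Shultz that a non-Type~I simple algebra has a hereditary subalgebra which, after using the Glimm-type argument (nonelementary simple implies that hereditary subalgebras $\overline{cAc}$ are again non-Type~I simple), yields a positive element whose spectrum is uncountable. An uncountable compact subset of $\R$ contains a Cantor set $C$. There is a continuous surjection $C \to [0,1]$, and by Tietze it extends to $f \colon \spec (b) \to [0,1]$ with $f (0) = 0$. Then $a = f (b)$ satisfies $\spec (a) = f \bigl( \spec (b) \bigr) \supseteq [0,1]$, and since the values of $f$ lie in $[0,1]$, we get $\spec (a) = [0,1]$.

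\textbf{Main obstacle.} The hard step is therefore upgrading ``infinite spectrum'' to ``uncountable spectrum''. I would do this by iterating Step~1 inside hereditary subalgebras to build a Cantor scheme. Given $b$ with infinitely many spectral points, pick two disjoint closed intervals, each meeting $\spec (b)$ in infinitely many points. Functional calculus gives orthogonal nonzero positive elements $c_0, c_1$ in $\overline{bAb}$, and each hereditary subalgebra $\overline{c_i A c_i}$ is simple and non-Type~I, so Step~1 applies there. Continuing inductively with norm-summable weights produces a positive element whose spectrum contains a Cantor set, as in Akemann–Shultz.
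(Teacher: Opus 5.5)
Your final reduction is sound. Once some $b \in A_{+}$ has uncountable spectrum, you take a Cantor set inside $\spec(b) \setminus \{0\}$, map it continuously onto $[0,1]$, and extend by Tietze with value $0$ at $0$; this gives $\spec(f(b)) = [0,1]$. That is essentially the second half of what the paper obtains by citing Akemann--Shultz: non-Type~I simple implies not scattered, and then their argument (4) on p.~61 applies.

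The gap is the step you yourself flag, producing uncountable spectrum. There you either appeal to Akemann--Shultz, in which case the proposal is just the paper's citation, or you sketch a Cantor scheme that does not work as described. If $c_{s0}, c_{s1}$ are arbitrary orthogonal positive elements of $\overline{c_s A c_s}$, they need not commute with $c_s$. So a norm-summable combination of the $c_s$ has no computable spectrum, and nothing ensures that each infinite branch contributes a spectral point.

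There are also two smaller errors. First, two disjoint closed intervals each meeting $\spec(b)$ in infinitely many points need not exist; take $\spec(b) = \{0\} \cup \{1/n \colon n \geq 1\}$. Second, in Step~1, corners $pAp$ of a simple infinite dimensional algebra need not be infinite dimensional; take $A = K(\ell^2)$. What you actually need in both places is that $A$ has no minimal projection. This holds because a simple C*-algebra with a minimal projection is elementary, hence of Type~I.

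The missing idea is to nest the pieces so that each parent acts as a unit on its descendants. Index by finite $0$--$1$ strings $s$ and normalize $\|c_s\| = 1$. Take continuous $f, g \colon [0,1] \to [0,1]$ with $f(0) = 0$, $f = 1$ on $[1/2, 1]$, $g(1) = 1$ and $\supp(g) \subseteq (1/2, 1]$. Set $e_s = f(c_s)$, and choose orthogonal norm-one $c_{s0}, c_{s1} \in \overline{g(c_s) A g(c_s)}$. This is possible because a nonzero hereditary subalgebra with no two orthogonal nonzero positive elements is $\C p$ for a minimal projection $p$. That is the only splitting fact needed, so Step~1 is superfluous.

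Since $fg = g$, you get $e_s e_t = e_t$ whenever $t$ extends $s$, and $e_s e_t = 0$ whenever $s$ and $t$ are incomparable. Hence the $e_s$ generate a commutative algebra $C_0(\Omega)$. For each $\beta \in \{0,1\}^{\N}$, the sets $\{\omega \in \Omega \colon \widehat{e_{\beta|_{n}}}(\omega) = 1\}$ are nonempty, compact and decreasing. Their intersection contains a character $\chi_\beta$ with $\chi_\beta(e_s) = 1$ if $s$ is an initial segment of $\beta$ and $\chi_\beta(e_s) = 0$ otherwise.

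Now set $a = \sum_{n \geq 1} 3^{-n} \sum_{|s| = n, \, s_n = 1} e_s$. It converges because the inner sums are orthogonal sums of contractions, and $\chi_\beta(a) = \sum_n \beta_n 3^{-n}$. So $\spec(a)$ contains a Cantor set, and your Tietze argument finishes. Repaired this way, your route is a self-contained proof that uses only the absence of minimal projections, in place of the theory of scattered C*-algebras behind the paper's citation.
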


\begin{proof}
The discussion before~(1) on page~61 of~\cite{AkmnShlz}
shows that $A$ is not scattered in the sense of~\cite{AkmnShlz}.
The conclusion therefore follows
from the argument in~(4) on page~61 of \cite{AkmnShlz}.
\end{proof}

We will frequently need the next lemma,
and a refined version proved afterwards.

\begin{lem}[Lemma 3.6 of \cite{Phl40}]\label{L_3618_Lg36}
Let $A$ be a simple stably finite unital C*-algebra which
is not of Type I.
Let $p \in A \ten K$ be a nonzero
projection, let $N \in \N$, and let $\xi \in \Cu (A) \setminus \set{0}$.
Then there exist $\mu, \kp \in \W (A)$ which are not the classes of \pj{s}
and such that $\mu \leq \cc{p} \leq \mu + \kp$ and $N \kp \leq \xi$.
\end{lem}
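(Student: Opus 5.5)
The plan is to build $\mu$ and $\kp$ from a single positive element in a corner of $p$, using a function supported on a short interval for $\kp$ and its complementary cut-down for $\mu$. Replace $p$ by a Cuntz equivalent projection in $M_k(A)$ for some $k$. Then $pM_k(A)p$ is a simple unital \ca{} which is not of Type~I, since it is Morita equivalent to $A$. Also set $\xi_0 = \xi$.

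\textbf{Step 1: dividing $\xi$.} First I use simplicity and the non-Type~I hypothesis to find a nonzero $c \in (A\ten K)_+$ with $N\cc{c} \leq \xi$. Choose a nonzero $b \in (A\ten K)_+$ with $\cc{b}\le\xi$; one can cut down a representative of $\xi$ so that $\cc{b} \ll \xi$. Apply Lemma~\ref{L_3618_Sp01} in $\ov{b(A\ten K)b}$, which is simple and not of Type~I, to get $x$ with $\spec(x)=[0,1]$. Take continuous functions $f_1,\dots,f_N$ with disjoint supports in $(0,1]$. The elements $f_j(x)$ are then nonzero and pairwise orthogonal, so $\sum_j\cc{f_j(x)}\le\cc{x}\le\xi$. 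Since $A$ is simple, each $\cc{f_j(x)}$ is full. By the standard lemma in simple non-Type~I algebras (Lemma~2.4 of \cite{Phl40} type), I can find a single nonzero $c$ with $c\cuntz f_j(x)$ for all $j$. This gives $N\cc{c}\le\xi$.

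\textbf{Step 2: the pieces of $p$.} Apply Lemma~\ref{L_3618_Sp01} again, now in $\ov{c(A\ten K)c}$, and then in $pM_k(A)p$. This produces $y\in (pM_k(A)p)_+$ with $\spec(y)=[0,1]$; I may also take $y$ with $\cc{y}$ small in the appropriate sense, as explained in Step~3. Fix a small $\ep>0$ and define
\[
\mu=\cc{(y-\ep)_+}, \andeqn \kp=\cc{g(y)},
\]
where $g$ is continuous, positive on $[0,2\ep)$ and zero on $[2\ep,1]$. Then $\mu\le\cc p$, and $p$ dominates $(y-\ep)_+ + g(y)$ up to Cuntz equivalence, which gives $\cc p\le\mu+\kp$. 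Neither class is the class of a projection. The reason is that $0$ is a non-isolated point of the spectrum of $(y-\ep)_+$ and of $g(y)$, and in a stably finite algebra such elements are not Cuntz equivalent to projections (Lemma~2.x of \cite{Phl40}). Stable finiteness is exactly where that hypothesis enters.

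\textbf{Step 3: the main obstacle.} The hard part is making $N\kp\le\xi$, that is, $g(y)\cuntz c$. A naive choice of $y$ gives no control over how large $\cc{g(y)}$ is. My first attempt is to build $y$ so that its spectral piece near $0$ is controlled by $c$. Choose a nonzero $d\in (pM_k(A)p)_+$ with $d\cuntz c$, which exists by simplicity. Take $y_0$ with spectrum $[0,1]$ in $\ov{d M_k(A) d}$. Set $y=\bigl(p - h(y_0)\bigr)$-type combinations, where $h$ is a function with $h = 1$ near $0$ and $h = 0$ near $1$. The aim is that $1-y$, restricted to small spectral values, lives inside $\ov{dAd}$, giving $g(y)\cuntz d\cuntz c$.

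A cleaner alternative is available if that fails: take $y=p-z$ with $z\in\ov{dAd}$ a positive contraction of spectrum $[0,1]$. Then $\mu=\cc{(p-z-\ep)_+}\le\cc p$. The complementary piece is $\kp=\cc{f(z)}$ for some $f$ vanishing near $0$, and $f(z)\in\ov{dAd}$ gives $\kp\le\cc d\le\cc c$, hence $N\kp\le\xi$. Here the inequality $\cc p\le\mu+\kp$ comes from $p=(p-z)+z$ after a small perturbation; I expect verifying it with the $\ep$-cutoffs to be the only technical point.
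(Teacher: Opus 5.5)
Your final route is correct and is essentially the argument the paper gives for its refined version, Lemma~\ref{L_5Z26_InterpPj}; the paper itself only cites \cite{Phl40} for this statement. That route is the ``cleaner alternative'' $y = p - z$ with $z \in \ov{d M_k (A) d}$ of spectrum $[0, 1]$ and $d \precsim c$, combined with your Step~2 observation that $(t - \ep)_{+} + g (t) > 0$ on $[0, 1]$, so that $(y - \ep)_{+} + g (y)$ is invertible in the corner and $p \precsim (y - \ep)_{+} \oplus g (y)$. In the paper one has $x = p - y$, $\ep = \tfrac{1}{3}$, and $\xi$ is divided using Lemma~2.4 of \cite{Phl40} rather than your disjointly supported functions plus a common Cuntz minorant; also discard the ``first attempt'' in Step~3, which read literally makes $y = (1 - h)(y_0)$ lie in $\ov{d M_k (A) d}$ and so gives no control of $g (y)$, and note that the function $f (t) = g (1 - t)$ in the alternative must be positive on $[1 - \ep, 1]$, not merely vanish near $0$, which is exactly what makes the cutoff verification you postponed automatic.
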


\begin{lem}\label{L_5Z26_InterpPj}
Let $A$ be a simple stably finite unital \ca{} which is not of
Type~I.
Let $p \in A \otimes K$ be a \nzp,
let $N \in \N$,
and let $\xi \in \Cu (A) \setminus \{ 0 \}$.
Then there exist $x, y \in (p (A \otimes K) p)_{+}$ and
$\ep \in (0, 1)$ such that:
\begin{enumerate}
\item\label{I_L_5Z26_InnPj_Spec}
$\spec (x) = \spec (y) = [0, 1]$.
\item\label{I_L_5Z26_InnPj_xlp}
$x \precsim_A p$.
\item\label{I_L_5Z26_InnPj_plxy}
$p \precsim_A (x - \ep)_{+} \oplus (y - \ep)_{+}$.
\item\label{I_L_5Z26_InnPj_nylxi}
$N \cc{y}_A \leq \xi$.
\item\label{I_L_5Z26_InnPj_NyLeBe}
$N \cc{y}_A \leq \cc{(x - \ep)_{+}}_A$.
\end{enumerate}
\end{lem}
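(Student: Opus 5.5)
The plan is to refine the proof of Lemma~\ref{L_3618_Lg36} by working inside the corner $p (A \otimes K) p$ and building $x$ and $y$ from functional calculus on a single element with spectrum $[0, 1]$.

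\textbf{Reduction to a corner.} Set $E = p (A \otimes K) p$. Then $E$ is simple, unital with unit $p$, and stably finite. It is not of Type~I, since it is stably isomorphic to $A$. All Cuntz comparisons in $E$ and in $A \otimes K$ agree for elements of $E$, because $E$ is a hereditary subalgebra.

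\textbf{Producing a small element.} Since $E$ is simple and not of Type~I, the element $\xi \neq 0$ dominates Cuntz classes of arbitrarily many orthogonal copies. Concretely:
\begin{enumerate}
\item Choose $z \in (A \otimes K)_+$ with $\cc{z} \leq \xi$ and $z \neq 0$.
\item By simplicity, find a nonzero $c \in E_+$ with $c \precsim_A z$.
\item Apply Lemma~\ref{L_3618_Sp01} in $\overline{c E c}$ and the standard argument (choose $N$ orthogonal nonzero pieces via disjoint functions on $[0,1]$, together with Glimm-type halving in a non-Type-I simple algebra) to obtain $y_0 \in E_+$, $y_0 \neq 0$, with $N \cc{y_0} \leq \cc{c} \leq \xi$.
\end{enumerate}
Similarly, I would make $y_0$ also satisfy $2N\cc{y_0}\le \cc{c}$ if extra room is needed. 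Replace $y_0$ by an element with spectrum $[0,1]$ in its hereditary subalgebra, again using Lemma~\ref{L_3618_Sp01}; this gives $y$ with $\spec(y) = [0,1]$ and $N\cc{y} \leq \xi$, which is~(\ref{I_L_5Z26_InnPj_nylxi}).

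\textbf{Choosing $x$ to complement $y$.} We need $x \le$ "almost all of $p$" in the sense that $p \precsim (x-\ep)_+ \oplus (y-\ep)_+$, while $(x-\ep)_+$ still dominates $N\cc{y}$. The approach is to arrange $y$ inside a hereditary subalgebra orthogonal to something. Specifically:
\begin{enumerate}
\item Pick $b \in E_+$ with $\spec(b) = [0,1]$ and $\|b\| = 1$.
\item Let $f$ be supported near $0$ and let $x = g(b)$ with $g$ equal to $1$ on $[\dt,1]$, so $1 - x$ is supported in $b$'s spectrum near $0$.
\item Choose the element $c$ above inside $\overline{f(b) E f(b)}$ for $f$ supported on $[0,\dt]$, with $f(0)=1$.
\item Arrange $y$ so that $(y-\ep)_+$ Cuntz dominates $(p - x)$ restricted appropriately; in practice, choose $y$ and $\ep$ so that $p \precsim (x-\ep)_+ \oplus h(b)$, where $h$ covers $[0, 2\dt]$, and $h(b) \precsim (y-\ep)_+$.
\end{enumerate}

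\textbf{Main obstacle.} The hardest step is reconciling two requirements on $y$: it must be small, so that $N\cc{y}\le\xi$, yet $(y-\ep)_+$ must absorb the "gap" $p - x$, which has nonzero class. To handle this, I would reverse the construction: start from Lemma~\ref{L_3618_Lg36}, applied with $2N$ or $N+1$ and with $\xi$ replaced by $\min$-type smaller classes, to get $\mu \le \cc{p} \le \mu + \kp$ with $\mu, \kp$ nonprojection classes and $N\kp \le \xi$. Then:
\begin{enumerate}
\item Since $\kp$ is not a projection class, represent it by $y$ with spectrum $[0,1]$ (via Lemma~\ref{L_3618_Sp01} in the hereditary subalgebra it generates, adjusted by functional calculus so that the class is unchanged).
\item Similarly represent $\mu$ by an element with spectrum $[0,1]$, and compress into $E$ using $\mu \le \cc{p}$ to get $x \in E$ with $x \precsim p$.
\item Because $p$ is compact, $\cc{p} \ll \cc{x} + \cc{y}$ gives $\ep > 0$ with $p \precsim (x-\ep)_+ \oplus (y-\ep)_+$. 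Since $\cc{x} + \cc{y}$ is a supremum of the classes of these cut-downs, this yields~(\ref{I_L_5Z26_InnPj_plxy}).
\end{enumerate}
For~(\ref{I_L_5Z26_InnPj_NyLeBe}), I would apply Lemma~\ref{L_3618_Lg36} with $\xi$ replaced by a nonzero class $\le \xi$ that is also $\le \cc{(x_0-\ep_0)_+}$ for a preliminary $x_0$. Such a class exists by simplicity: any two nonzero classes dominate a common nonzero class in a non-Type-I simple algebra. Shrinking $\ep$ then preserves the bound.
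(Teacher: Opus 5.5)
\textbf{There is a genuine gap: your final route through Lemma~\ref{L_3618_Lg36} does not give condition (5).} That lemma only provides $\mu \leq \cc{p} \leq \mu + \kappa$ and $N \kappa \leq \xi$. It says nothing relating $\kappa$ to $\mu$. Since $A$ is not assumed to have any comparison, knowing that $\mu$ is ``almost all of $\cc{p}$'' does not let you conclude $N \cc{y}_A \leq \cc{(x - \ep)_{+}}_A$.

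Your patch with a preliminary $x_0$ is circular. Reapplying the lemma with a class $\xi'$ below both $\xi$ and $\cc{(x_0 - \ep_0)_{+}}_A$ produces new classes $\mu', \kappa'$. To get (3) from $\cc{p} \leq \mu' + \kappa'$, the final $x$ must represent $\mu'$. But the bound you actually hold is $N \kappa' \leq \cc{(x_0 - \ep_0)_{+}}_A$, which concerns the old $x_0$. If you instead keep $x_0$ as the final $x$, you lose (3), because $\cc{p} \leq \mu_0 + \kappa'$ is not known.

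Several other steps are asserted without justification:
\begin{itemize}
\item Representing a nonprojection class by an element with spectrum exactly $[0, 1]$ is not justified. Functional calculus preserves the Cuntz class but cannot fill gaps in the spectrum. An element of the hereditary subalgebra with spectrum $[0, 1]$, as given by Lemma~\ref{L_3618_Sp01}, generally has a strictly smaller class.
\item Moving the representatives of $\mu$ and $\kappa$ into $p (A \otimes K) p$, as the statement requires, is not justified. For $y$ you do not even know that $\kappa \leq \cc{p}$.
\end{itemize}

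\textbf{The missing idea is to take $x = p - y$ exactly, with $y$ placed inside one of $N + 1$ orthogonal equivalent pieces of $p$.} This also dissolves the ``main obstacle'' you identified. Condition (3) then follows from a norm estimate, not from any Cuntz comparison. The construction goes as follows.
\begin{itemize}
\item Lemma~2.4 of~\cite{Phl40} gives orthogonal nonzero $b_0 \sim_A b_1 \sim_A \cdots \sim_A b_N$ in $p (A \otimes K) p$. It also gives orthogonal $c_1 \sim_A \cdots \sim_A c_N$ in the hereditary subalgebra of a representative of $\xi$.
\item Lemma~2.6 of~\cite{Phl40} gives a nonzero $d \in \overline{b_0 (A \otimes K) b_0}$ with $d \precsim_A c_1$.
\item Lemma~\ref{L_3618_Sp01} gives $y \in \overline{d (A \otimes K) d}$ with $\spec (y) = [0, 1]$. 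Set $x = p - y$ and $\ep = \frac{1}{3}$.
\end{itemize}
The five conditions then follow directly.
\begin{itemize}
\item Computing in the corner, $\spec (x) = [0, 1]$, and $x \leq p$.
\item $\| (x - \ep)_{+} + (y - \ep)_{+} - p \| \leq \frac{2}{3}$, so this sum is invertible in $p (A \otimes K) p$. This gives (3).
\item $N \cc{y}_A \leq N \cc{c_1}_A \leq \xi$, which is (4).
\item Since $y b_j = 0$ for $j \geq 1$, we get $x b_j = b_j$, hence $f (x) b_j = b_j$ for the function $f$ with $f (x) \sim_A (x - \ep)_{+}$. Therefore $N \cc{y}_A \leq N \cc{b_0}_A \leq \cc{(x - \ep)_{+}}_A$, which is (5).
\end{itemize}
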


\begin{proof}
Choose $a \in (A \otimes K)_{+} \setminus \{ 0 \}$
such that $\cc{a}_A = \xi$.
Use Lemma 2.4 of~\cite{Phl40} to choose
orthogonal nonzero positive elements
$b_0, b_1, b_2, \ldots, b_N \in p (A \otimes K) p$
such that $b_0 \sim_A b_1 \sim_A b_2 \sim_A \cdots \sim_A b_N$
and orthogonal nonzero positive elements
$c_1, c_2, c_3 \ldots, c_N \in {\overline{a (A \otimes K) a}}$
such that $c_1 \sim_A c_2 \sim_A \cdots \sim_A c_N$.
By Lemma 2.6 of~\cite{Phl40}, there is
$d \in
  \bigl( {\overline{b_0 (A \otimes K) b_0}} \bigr)_{+}
  \setminus \set{0}$
such that $d \precsim_A c_1$.
Use Lemma~\ref{L_3618_Sp01} to choose
$y \in \bigl( {\overline{d (A \otimes K) d}} \bigr)_{+}$
such that $\spec (y) = [0, 1]$.
Define $x = p - y$ and $\ep = \frac{1}{3}$.
We show that these choices satisfy the conclusion of the lemma.

Part~(\ref{I_L_5Z26_InnPj_Spec}) is clear.
For~(\ref{I_L_5Z26_InnPj_xlp}), we in fact have $x \leq p$.
For~(\ref{I_L_5Z26_InnPj_nylxi}), we have
\[
y \precsim_A d \precsim_A c_1
\andeqn
N \cc{c_1}_A = \cc{c_1}_A + \cc{c_2}_A + \cdots +
\cc{c_N}_A \leq \cc{a}_A = \xi.
\]
We prove~(\ref{I_L_5Z26_InnPj_plxy}).
Set $r = (x - \ep)_{+} + (y - \ep)_{+} \in p (A \otimes K) p$.
Then
\[
\| r - p \|
\leq \bigl\| (x - \ep)_{+} - x \bigr\|
  + \bigl\| (y - \ep)_{+} - y \bigr\|
\leq \frac{1}{3} + \frac{1}{3}
= \frac{2}{3}.
\]
Therefore $r$ is invertible in $p (A \otimes K) p$.
So
\[
p \sim_A r = (x - \ep)_{+} + (y - \ep)_{+}
\precsim_A (x - \ep)_{+} \oplus (y - \ep)_{+},
\]
as desired.

Finally, we prove~(\ref{I_L_5Z26_InnPj_NyLeBe}).
Define a \cfn{} $f \colon [0, \I) \to [0, 1]$ by
\[
f ( \ld) = \begin{cases}
0 & \hspace*{1em} 0 \leq \ld \leq \frac{1}{3} \\
3 \ld - 1 & \hspace*{1em} \frac{1}{3} < \ld \leq \frac{2}{3}\\
1 & \hspace*{1em} \frac{2}{3} < \ld.
\end{cases}
\]
Then $(x - \ep)_{+} \sim f (x)$.
Since
\[
p - x = y \in {\overline{d (A \otimes K) d}}
\S {\overline{b_0 (A \otimes K) b_0}},
\]
we have $x b_j = b_j$ for $j = 1, 2, \ldots, N$.
Therefore also $f (x) b_j = b_j$ for $j = 1, 2, \ldots, N$.
Now
\[
N \cc{b_0}_A = \cc{b_1}_A + \cc{b_2}_A + \cdots +
\cc{b_N}_A \leq \cc{f (x)}_A = \cc{(x - \ep)_{+}}_A.
\]
Since
$y \in {\overline{b_0 (A \otimes K) b_0}}$,
we have $y \precsim_A b_0$,
and (\ref{I_L_5Z26_InnPj_NyLeBe}) follows.
\end{proof}

\subsection{Comparison and Divisibility in $\W (A)$}\label{SubSec_WA}

In this section, we show that $n$-comparison and
$m$-almost divisibility of $\Cu (A)$ are equivalent to
comparison and divisibility properties in  $\W (A)$.
For the clarity of the reader, we distinguish between the class
of an element $a \in A$ when viewed as in $\W (A)$ or $\Cu (A)$,
as noted in Remark~\ref{1903_rem}.

\begin{prp}\label{CP_6325_WA_Cmp}
Let $A$ be a \ca{} and let $n \in \N$.
Then $\Cu (A)$ has $n$-comparison
if and only if $\W (A)$ has $n$-comparison.
\end{prp}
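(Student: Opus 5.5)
We prove the two implications separately, using that $\W (A)$ sits inside $\Cu (A)$ with the same order and addition (Remark~\ref{1903_rem}), and that every element of $\Cu (A)$ is a supremum of an increasing sequence of elements of $\W (A)$. The latter holds because for $a \in (A \otimes K)_+$ the elements $(a - 1/k)_+$ are Cuntz equivalent to elements of $M_\infty (A)_+$: cut down by a large corner projection and perturb.

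\emph{$\Cu (A)$ implies $\W (A)$.} Suppose $\Cu (A)$ has $n$-comparison, and let $\xi, \eta_0, \ldots, \eta_n \in \W (A)$ with $\xi \les \eta_j$ in $\W (A)$ for all $j$. The inequalities $(k_j + 1)\xi \leq k_j \eta_j$ hold in $\Cu (A)$, since the order is the same. So $n$-comparison of $\Cu (A)$ gives $\xi \leq \sum_j \eta_j$ in $\Cu (A)$. Since all elements involved lie in $\W (A)$, the same inequality holds in $\W (A)$ by Remark~\ref{1903_rem}.

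\emph{$\W (A)$ implies $\Cu (A)$.} Now let $\xi, \eta_j \in \Cu (A)$ with $(k_j+1)\xi \leq k_j \eta_j$. Write $\xi = \cc{a}$ and $\eta_j = \cc{b_j}$. It suffices to show $\cc{(a - \ep)_+} \leq \sum_j \eta_j$ for every $\ep > 0$, and then take the supremum.

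Fix $\ep > 0$. We have $(a-\ep)_+^{\oplus (k_j+1)} \precsim (a-\ep/2)_+^{\oplus(k_j+1)}$, and $\cc{(a-\ep/2)_+} \ll \xi$. Therefore
\[
(k_j+1)\cc{(a-\ep/2)_+} \ll (k_j+1)\xi \leq k_j \eta_j = \sup_\delta k_j \cc{(b_j-\delta)_+}.
\]
By the definition of $\ll$ there is $\delta > 0$ with
\[
(k_j+1)\cc{(a-\ep/2)_+} \leq k_j \cc{(b_j - \delta)_+}
\]
for all $j$ simultaneously, since there are finitely many $j$. This is the main technical step: using compact containment to pass from a $\les$-relation with a supremum on the right to one with truncated elements, while keeping the same $k_j$.

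Next replace the truncated elements by elements of $M_\infty (A)_+$. Choose $a' \in M_\infty(A)_+$ with
\[
\cc{(a-\ep)_+} \leq \cc{a'} \leq \cc{(a-\ep/2)_+},
\]
and $b_j' \in M_\infty(A)_+$ with $\cc{(b_j-\delta)_+} \leq \cc{b_j'} \leq \eta_j$. For instance, compress by $1_{M_N} \otimes$ (approximate unit) and use the standard fact that $\|x - y\| < \ep$ implies $(x-\ep)_+ \precsim y$, with slightly adjusted thresholds.

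Then $(k_j+1)\cc{a'} \leq k_j \cc{b_j'}$ in $\W (A)$, so $n$-comparison of $\W (A)$ gives
\[
\cc{(a-\ep)_+} \leq \cc{a'} \leq \sum_j \cc{b_j'} \leq \sum_j \eta_j.
\]
Letting $\ep \to 0$ gives $\xi \leq \sum_j \eta_j$. The only care needed is to choose the thresholds so that the approximations by elements of $M_\infty(A)$ are sandwiched correctly in both directions.
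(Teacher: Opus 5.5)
Your proposal is correct and follows essentially the same route as the paper. The forward direction is identical. For the converse, the paper likewise truncates and picks $\delta_j > 0$ with $(k_j + 1) \cc{(a - \ep)_+}_A \leq k_j \cc{(a_j - \delta_j)_+}_A$, leaving implicit the compact-containment justification that you spell out. It then replaces the truncated elements by elements of $M_{\infty}(A)_+$ before applying $n$-comparison in $\W (A)$. The only difference is that the paper cites Lemma~1.9 of~\cite{Phl40} to obtain elements exactly Cuntz equivalent to $(a - \ep)_+$ and $(a_j - \delta_j)_+$, whereas you build sandwiched approximants by compressing with an approximate unit, which works equally well.
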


\begin{proof}
Let us first assume that $\Cu (A)$ has $n$-comparison.
Let $a, a_0, a_1, \hdots, a_n \in M_{\infty}(A)_+$ satisfy
$[a]_A \les [a_j]$ in $\W (A)$ for $j = 0, 1, \hdots, n$.
Thus, there are $k_0, k_1, \hdots, k_n \in \Nz$
such that in $\W (A)$ we have
\begin{equation*}
(k_j + 1) [a]_A \leq k_j [a_j]_A.
\end{equation*}
For $j = 0, 1, \hdots, n$, we have
$(k_j + 1) \cc{a}_A \leq k_j \cc{a_j}_A$ in $\Cu (A)$.
By $n$-comparison for $\Cu (A)$,
\begin{equation*}
\cc{a}_A \leq \sum_{j = 0}^n \cc{a_j}_A.
\end{equation*}
By Remark~\ref{1903_rem}, it follows that in $\W (A)$ we have
\[
[a]_A \leq \sum_{j = 0}^n [a_j]_A,
\]
showing $n$-comparison for $\W (A)$.

Let us now assume that $\W (A)$ has $n$-comparison.
Let $a, a_0, a_1, \hdots, a_n \in (A \otimes K)_+$,
and suppose that there are $k_0, k_1, \hdots, k_n \in \Nz$ such that
\begin{equation*}
(k_j + 1) \cc{a}_A \leq k_j \cc{a_j}_A
\end{equation*}
for $j = 0, 1, \hdots, n$.
Let $\ep > 0$.
Choose $\delta_j > 0$ such that in $\Cu (A)$ we have
\[
(k_j + 1) \cc{(a - \ep)_+}_A  \leq k_j \cc{(a_j- \delta_j)_+}_A.
\]
By Lemma~1.9 of~\cite{Phl40}, there exist
$c, c_0, c_1, \hdots, c_n \in M_{\infty}(A)_+$ such that
\begin{equation}\label{2003_comp_eqn_Prv}
\cc{(a - \ep)_+}_A = \cc{c}_A
\end{equation}
and for $j = 0, 1, \hdots, n$ we have
\begin{equation}\label{2003_comp_eqn}
\cc{(a_j- \delta_j)}_A = \cc{c_j}_A.
\end{equation}
Thus, in $\Cu (A)$ we have
\[
(k_j + 1) \cc{c}_A \leq k_j \cc{c_j}_A
\]
and hence, by Remark~\ref{1903_rem}, in $\W (A)$ we have
\[
(k_j + 1) [c]_A \leq k_j [c_j]_A.
\]
By $n$-comparison for $\W (A)$, we see that in $\W (A)$ we have
\begin{equation}\label{2003_comp_eqn_w}
[c]_A \leq \sum_{j = 0}^n [c_j]_A.
\end{equation}
Using (\ref{2003_comp_eqn_Prv}) at the first step,
(\ref{2003_comp_eqn_w}) and Remark~\ref{1903_rem} at the second step,
and (\ref{2003_comp_eqn}) at the third step,
we get the following inequalities in $\Cu (A)$:
\[
\cc{(a - \ep)_+}_A = \cc{c}_A \leq \sum_{j = 0}^n \cc{c_j}_A
 = \sum_{j = 0}^n \cc{(a_j- \delta_j)}_A \leq \sum_{j = 0}^n \cc{a_j}_A.
\]
As $\ep > 0$ is arbitrary, we have
$\cc{a}_A \leq \sum_{j = 0}^n \cc{a_j}_A$,
proving $n$-comparison for $\Cu (A)$.
\end{proof}

Next, we show that $m$-almost divisibility in $\Cu (A)$
is equivalent to a divisibility condition in $\W (A)$.
We recall the following standard result.

\begin{lem}[Proposition~4.3 of~\cite{gardella}]\label{L_5Z28_CptCont}
Let $A$ be a \ca{} and let $a, b \in (A \otimes K)_{+}$.
Then $\cc{a}_A \ll \cc{b}_A$ \ifo{} there is an $\ep > 0$ such
that $a \precsim_A (b - \ep)_{+}$.
\end{lem}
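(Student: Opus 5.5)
The plan is to prove the two implications separately. The forward direction uses only the fact that $\cc{b}_A$ is the supremum of the classes of its cut-downs $(b - 1/n)_{+}$. The reverse direction uses Rørdam's lemma together with the structure of suprema in $\Cu (A)$ from \cite{CoElIv2008}.

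For ``$\Rightarrow$'': the sequence $\bigl( \cc{(b - 1/n)_{+}}_A \bigr)_{n}$ is increasing, since $(b - 1/(n+1))_{+}$ dominates a function of $(b - 1/n)_{+}$. I claim its supremum is $\cc{b}_A$. It is bounded above by $\cc{b}_A$. Conversely, suppose $\eta \in \Cu (A)$ satisfies $\cc{(b - 1/n)_{+}}_A \leq \eta$ for all $n$. The Kirchberg--Rørdam criterion says $b \precsim_A c$ \ifo{} $(b - \dt)_{+} \precsim_A c$ for every $\dt > 0$. Applying it to a representative $c$ of~$\eta$ gives $\cc{b}_A \leq \eta$. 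Given $\cc{a}_A \ll \cc{b}_A$, Definition~\ref{D_Cu_WayBelow} applied to this sequence yields $N$ with $a \precsim_A (b - 1/N)_{+}$, so $\ep = 1/N$ works.

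For ``$\Leftarrow$'': assume $a \precsim_A (b - \ep)_{+}$. It suffices to show $\cc{(b - \ep)_{+}}_A \ll \cc{b}_A$. Let $(\xi_n)$ be increasing with $\cc{b}_A \leq \sup_n \xi_n$. I will use the Coward--Elliott--Ivanescu construction of suprema \cite{CoElIv2008}. It represents $\sup_n \xi_n$ by an element $c \in (A \otimes K)_{+}$ with the following property: for every $\dt > 0$ there is $n$ with $(c - \dt)_{+} \precsim_A c_n$, where $\xi_n = \cc{c_n}_A$. (There $\xi_n$ is replaced by Cuntz equivalent representatives arranged along a chain of approximate inclusions, and $c$ is built from them.) Since $b \precsim_A c$, Rørdam's lemma applies: for the given $\ep$ there are $\dt > 0$ and $r$ with $(b - \ep)_{+} = r (c - \dt)_{+} r^{*}$. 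Hence
\[
(b - \ep)_{+} \precsim_A (c - \dt)_{+} \precsim_A c_n
\]
for large~$n$. So $\cc{a}_A \leq \cc{(b - \ep)_{+}}_A \leq \xi_n$, which proves $\cc{a}_A \ll \cc{b}_A$.

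The main obstacle is the property of suprema used in the reverse direction: that each cut-down $(c - \dt)_{+}$ of a representative of the supremum lies below some term of the sequence. This is the technical heart of \cite{CoElIv2008}. I would quote it from there, via the fact that $\Cu (A)$ is a $\Cu$-semigroup and their explicit description of suprema, rather than reprove it. Everything else is routine functional calculus plus Rørdam's lemma.
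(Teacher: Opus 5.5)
Your proof is correct. The paper gives no argument of its own for this lemma; it simply quotes Proposition~4.3 of \cite{gardella}. What you wrote is essentially the standard proof behind that citation, with two ingredients:
\begin{enumerate}
\item The forward direction comes from $\cc{b}_A = \sup_n \cc{(b - 1/n)_{+}}_A$. Your upper-bound argument, using that $b \precsim_A c$ \ifo{} $(b - \dt)_{+} \precsim_A c$ for all $\dt > 0$, is right.
\item The reverse direction comes from R{\o}rdam's lemma together with the concrete form of suprema from \cite{CoElIv2008}.
\end{enumerate}

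Two small remarks.

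\textbf{Monotonicity.} The sequence $\bigl( \cc{(b - 1/n)_{+}}_A \bigr)_n$ is increasing simply because $(b - 1/n)_{+} \leq (b - 1/(n + 1))_{+}$. Your phrase about ``dominating a function of'' $(b - 1/n)_{+}$ is an awkward way to say this.

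\textbf{The imported supremum property.} As you recognize, this is exactly the special case $\cc{(c - \dt)_{+}}_A \ll \cc{c}_A$ of the lemma, for a representative $c$ of the supremum. So it must be taken from the construction of suprema itself, not from any statement derived using this lemma. Also, \cite{CoElIv2008} phrases it for Hilbert modules, so a translation to positive elements is needed.

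A clean way to see it in the positive-element picture is as follows. Let $c$ be strictly positive in $\overline{\bigcup_k B_k}$, where $B_1 \subseteq B_2 \subseteq \cdots$ are hereditary subalgebras whose strictly positive elements $e_k$ satisfy $\cc{e_k}_A \leq \xi_{n_k}$. Given $\dt > 0$, choose $c' \in (B_k)_{+}$ with $\| c - c' \| < \dt$. Then $(c - \dt)_{+} \precsim_A c' \precsim_A e_k$, which is the property you need.
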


\begin{prp}\label{P_6324_WDiv}
Let $A$ be a \ca{} and let $m \in \N$.
Then $\Cu (A)$ is $m$-almost divisible if and only if
for every $a \in M_{\infty} (A)_+$, every $k \in \Nz$, and every $\ep > 0$,
there exists $b \in M_{\infty} (A)_+$ such that in $\W (A)$ we have
\[
k [b]_A \leq [a]_A
\andeqn
[(a - \ep)_+]_A \leq (k + 1) (m + 1) [b]_A.
\]
\end{prp}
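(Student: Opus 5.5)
We prove the two implications separately, in the same spirit as Proposition~\ref{CP_6325_WA_Cmp}. Lemma~\ref{L_5Z28_CptCont} translates between the way below relation in $\Cu (A)$ and cut-downs $(a - \ep)_+$. Lemma~1.9 of~\cite{Phl40} replaces elements of $A \otimes K$ (or their cut-downs) by elements of $\Mi (A)_+$ with the same Cuntz class. Remark~\ref{1903_rem} moves inequalities between $\Cu (A)$ and $\W (A)$. Throughout, the integer $k$ plays the role of $N$ in Definition~\ref{def_almost_divisibility}; the case $k = 0$ is trivial in both directions, taking $b = a$ or $\eta = \xi$.

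\emph{($\Rightarrow$)} Assume $\Cu (A)$ is $m$-almost divisible. Let $a \in \Mi (A)_+$, $k \in \N$, and $\ep > 0$. Since $(a - \ep)_+ \precsim_A (a - \ep/2)_+$, Lemma~\ref{L_5Z28_CptCont} gives $\cc{(a - \ep)_+}_A \ll \cc{a}_A$. Divisibility in $\Cu (A)$ produces $\eta = \cc{d}_A$ with $d \in (A \otimes K)_+$, $k \eta \leq \cc{a}_A$ and $\cc{(a - \ep)_+}_A \leq (k+1)(m+1) \eta$. The one issue is that $d$ need not lie in $\Mi (A)$. To fix this, write $\eta = \sup_{\delta} \cc{(d - \delta)_+}_A$. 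Since $\cc{(a - \ep)_+}_A \ll \cc{(a - \ep/2)_+}_A$, we could rerun the argument with $\ep/2$ in place of $\ep$. Then, by the way below property applied to the increasing sequence $(k+1)(m+1)\cc{(d - 1/n)_+}_A$, there is $\delta > 0$ with
\[
\cc{(a - \ep)_+}_A \leq (k+1)(m+1) \cc{(d - \delta)_+}_A .
\]
By Lemma~1.9 of~\cite{Phl40}, choose $b \in \Mi (A)_+$ with $\cc{b}_A = \cc{(d - \delta)_+}_A$. Then $k \cc{b}_A \leq k \eta \leq \cc{a}_A$. Remark~\ref{1903_rem} converts both inequalities into $\W (A)$.

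\emph{($\Leftarrow$)} Assume the $\W (A)$ condition. Let $\xi' \ll \xi$ in $\Cu (A)$ and $N \in \N$. Write $\xi = \cc{a}_A$ with $a \in (A \otimes K)_+$. By Lemma~\ref{L_5Z28_CptCont} there is $\ep > 0$ with $\xi' \leq \cc{(a - 2\ep)_+}_A$. Use Lemma~1.9 of~\cite{Phl40} to choose $c \in \Mi (A)_+$ with $\cc{c}_A = \cc{(a - \ep)_+}_A$. The hard part is recovering a cut-down of $c$ that still dominates $(a - 2\ep)_+$, since $c$ and $(a - \ep)_+$ are only Cuntz equivalent. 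For this, use that $\cc{(a - 2\ep)_+}_A \ll \cc{(a - \ep)_+}_A = \cc{c}_A = \sup_{\delta} \cc{(c - \delta)_+}_A$. This yields $\delta > 0$ with $(a - 2\ep)_+ \precsim_A (c - \delta)_+$.

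Apply the hypothesis to $c$, $k = N$, and $\delta$ to get $b \in \Mi (A)_+$ with $N [b]_A \leq [c]_A$ and $[(c - \delta)_+]_A \leq (N+1)(m+1)[b]_A$. Set $\eta = \cc{b}_A$. Then
\[
N \eta \leq \cc{c}_A \leq \cc{a}_A = \xi
\andeqn
\xi' \leq \cc{(c - \delta)_+}_A \leq (N+1)(m+1) \eta ,
\]
using Remark~\ref{1903_rem} to pass from $\W (A)$ to $\Cu (A)$. This is $m$-almost divisibility. The only delicate points are the two compactness arguments that pass to cut-downs; everything else is bookkeeping.
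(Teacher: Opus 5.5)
Your proof is correct and follows essentially the same route as the paper. In both directions you replace an element of $A \otimes K$ by one in $\Mi (A)$ using Lemma~1.9 of~\cite{Phl40}, and you use the same cut-down/compact-containment argument to obtain $\delta$; the only differences are the bookkeeping ($\ep/2, \ep$ and $\ep, 2\ep$ where the paper uses $\ep/3, 2\ep/3, \ep$ and $\ep/2, \ep$), your explicit derivation of $\delta$ from $\cc{d}_A = \sup_n \cc{(d - 1/n)_+}_A$, and your separate treatment of $k = 0$, the last two of which the paper leaves implicit (in the forward direction, state plainly that you apply divisibility to $\cc{(a - \ep/2)_+}_A \ll \cc{a}_A$ from the outset rather than ``rerunning'').
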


\begin{proof}
Let $\Cu (A)$ be $m$-almost divisible.
Let $a \in M_{\infty}(A)_+$ and let $k \in \Nz$.
Let $\ep > 0$ be arbitrary.
Since
$\inn{\left( a - \frac{2 \ep}{3} \right)_+}_A
 \ll \inn{\left(a - \frac{\ep}{3} \right)_+}_A$,
and $\Cu (A)$ is $m$-almost divisible,
there exists $b \in (A \otimes K)_+$ such that
\begin{equation}\label{1903_div_wtc}
k \cc{b}_A \leq \inn{\left(a - \frac{\ep}{3} \right)_+}_A
\quad {\mbox{and}} \quad
\inn{\left(a - \frac{2 \ep}{3} \right)_+}_A \leq (k + 1) (m + 1) \cc{b}_A.
\end{equation}
Choose $\delta > 0$ such that
\begin{equation}\label{1903_cutdown_wtc}
\inn{\left(a - \ep\right)_+}_A \leq (k + 1) (m + 1) \cc{(b- \delta)_+}_A.
\end{equation}
By Lemma~1.9 of~\cite{Phl40},
there exists $c\in M_{\infty}(A)_+$ such that
\[
\cc{(b- \delta)_+}_A = \cc{c}_A.
\]
Hence, by (\ref{1903_div_wtc}) and (\ref{1903_cutdown_wtc}), we see that
\begin{equation*}
k \cc{c}_A \leq \cc{a}_A
\andeqn
\cc{(a - \ep)_+}_A \leq (k + 1) (m + 1) \cc{c}_A.
\end{equation*}
Finally, by Remark~\ref{1903_rem}, it follows that in $\W (A)$ we have
\[
k [c]_A \leq [a]_A
\andeqn
[(a - \ep)_+] \leq (k + 1) (m + 1) [c]_A.
\]
This is the desired relation.

Conversely, assume that for all $\ep > 0$,
$k \in \Nz$, and $a \in M_{\infty}(A)_+$,
there exists $b \in M_{\infty}(A)_+$ such that in $\W (A)$ we have
\[
k[b]_A \leq [a]_A
\andeqn
[(a - \ep)_+] \leq (k + 1) (m + 1) [b]_A.
\]
Let $k \in \Nz$ and let $\xi, \eta \in \Cu (A)$ satisfy $\xi \ll \eta$.
Choose $a \in (A \otimes K)_+$ such that
\begin{equation}\label{Eq_6327_eta}
\cc{a}_A = \eta.
\end{equation}
By Lemma~\ref{L_5Z28_CptCont}, there is $\ep > 0$ such that
\begin{equation}\label{1903_almost_div_ctw}
\xi \leq \cc{(a - \ep)_+}_A.
\end{equation}
By Lemma~1.9 of~\cite{Phl40},
there exists $ c\in M_{\infty}(A)_+$ such that
\begin{equation*}
\cc{c}_A = \inn{\left(a - \frac{\ep}{2} \right)_+}_A.
\end{equation*}
Choose $\delta > 0$ such that
\begin{equation}\label{Eq_6327_NN}
\left(a - \ep\right)_+ \precsim_A (c- \delta)_+.
\end{equation}
By hypothesis, there exists $b \in M_{\infty}(A)_+$ such that
in $\W (A)$ we have
\[
k[b]_A \leq [c]_A
\andeqn
\left[\left(c- \delta\right)_+\right]_A \leq (k + 1) (m + 1) [b]_A.
\]
In $\Cu (A)$ we now get,
using Remark~\ref{1903_rem} and~(\ref{Eq_6327_eta}),
\begin{equation*}
k \cc{b}_A \leq \cc{c}_A
 = \inn{\left(a - \frac{\ep}{2} \right)_+}_A \leq \eta
\end{equation*}
and using (\ref{1903_almost_div_ctw}), (\ref{Eq_6327_NN}),
and Remark~\ref{1903_rem},
\begin{equation*}
\xi \leq \cc{(a - \ep)_+}_A
 \leq  \inn{\left(c- \delta\right)_+}_A \leq (k + 1) (m + 1) \cc{c}_A.
\end{equation*}
This proves $m$-almost divisibility for $\Cu (A)$.
\end{proof}

\section{Pureness for Large Subalgebras}\label{Sec_6314_PLarge}

In this section, we prove that if $B \S A$ is a large subalgebra
the sense of Definition~4.1 of~\cite{Phl40},
then $A$ is pure \ifo{} $B$ is pure.
We obtain results for comparison and almost divisibility
independently, with an increase of the parameter in the case
of almost divisibility.
For our applications, we do not need to know that pureness of $A$
implies pureness of~$B$.

\subsection{Preliminary Results}\label{Sec_6314_Comp}
We start by collecting material needed for both
comparison and almost divisibility.

\begin{ntn}\label{N_6315_Cup}
Let $A$ be a stably finite simple \uca.
Following Definition~3.1 of~\cite{Phl40},
we let $\Cu_{+} (A)$ denote the set of $\et \in \Cu (A)$ such that
$\et$ is not the class of a \pj;
that is, $\et$ is purely positive in the sense of~\cite{APT}.
See also \cite{EllRoSa}.
\end{ntn}

\begin{thm}[Theorem 6.8 of \cite{Phl40}]\label{T_6318_Isom_CuPl}
Let $A$ be an infinite dimensional stably finite simple
unital C*-algebra, and let $B \subseteq A$ be a subalgebra
which is large in the sense of Definition~4.1 of~\cite{Phl40}.
Let $\io \colon B \to A$ be the inclusion map.
Then $\io_{*}$ defines an order and semigroup isomorphism
from $\Cu_{+}(B) \cup \set{0}$ to $\Cu_{+}(A) \cup \set{0}$.
\end{thm}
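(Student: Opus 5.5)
The plan is to show that $\io_{*}$ maps $\Cu_{+} (B) \cup \set{0}$ into $\Cu_{+} (A) \cup \set{0}$, that it reflects the order there (so it is an order embedding, hence injective), and that it is surjective onto $\Cu_{+} (A) \cup \set{0}$. Additivity and preservation of order are automatic for $\io_{*}$. I will use the standard consequences of Definition~4.1 of~\cite{Phl40}: $B$ is simple, stably finite and infinite dimensional, and largeness passes to $M_n (B) \S M_n (A)$. I will also use that, in a simple stably finite algebra, a class $\cc{b}$ is purely positive \ifo{} it is not compact, that is, \ifo{} $\cc{b} \not\ll \cc{b}$. Concretely, for purely positive $b$ and every $\dt > 0$ there is a nonzero $c$, in the \hsa{} generated by $b$, with $(b - \dt)_{+} \oplus c \precsim b$; to get it, apply Lemma~\ref{L_3618_Sp01} inside the corner where $b$ has spectrum accumulating at~$0$.

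\emph{Step 1: purely positive elements of $B$ stay purely positive in $A$.} Let $b \in (B \otimes K)_{+}$ have purely positive class in~$B$, and suppose $b \sim_A p$ for some \pj~$p$. Then $\cc{p}_A \ll \cc{p}_A$, so by Lemma~\ref{L_5Z28_CptCont} there is $\dt > 0$ with $p \precsim_A (b - \dt)_{+}$. Choosing $c \neq 0$ for this $\dt$ as above gives $p \oplus c \precsim_A b \sim_A p$. Since $A$ is simple, $c$ is full, and this contradicts stable finiteness of~$A$.

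\emph{Step 2: order reflection.} This is the heart of the argument and the step I expect to be the main obstacle. Let $a, b \in (B \otimes K)_{+}$ with $\cc{b}_B$ purely positive and $a \precsim_A b$; I want $a \precsim_B b$. It suffices to prove $(a - \ep)_{+} \precsim_B b$ for every $\ep > 0$, and by cutting down we may work in $M_n$. First choose $\dt > 0$ and $v \in M_n (A)$ with $\| v (b - \dt)_{+} v^{*} - (a - \ep/2)_{+} \|$ small. Next choose a nonzero $c \in B$ with $(b - \dt)_{+} \oplus c \precsim_B b$. Then apply largeness to the finite set consisting of $v$ and the relevant entries, with target element~$c$. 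This produces $g \in B$ with $0 \leq g \leq 1$ and $g \precsim_B c$, together with $w \in M_n (B)$ close to $(1 - g) v$. This gives, in~$B$,
\[
(a - \ep)_{+} \precsim_B (1 - g) (a - \ep/2)_{+} (1 - g) \oplus g \precsim_B (b - \dt)_{+} \oplus c \precsim_B b .
\]
The delicate points are producing these Cuntz estimates with the right tolerances, using Lemma~2.6 of~\cite{Phl40} to split off $g$, and handling $K$ by restricting to matrix corners. This yields injectivity on $\Cu_{+} (B) \cup \set{0}$; the only equivalence class this could merge with $0$ is $0$ itself, since simplicity of~$A$ rules out $b \sim_A 0$ for $b \neq 0$.

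\emph{Step 3: surjectivity.} Let $a \in (A \otimes K)_{+}$ have purely positive class, and reduce to $a \in M_n (A)$ by taking suprema. For each $k$, use largeness to find $b_k \in M_n (B)_{+}$ with
\[
(a - 1/k)_{+} \precsim_A b_k \precsim_A a .
\]
To do this, approximate $(a - 1/(k + 1))_{+}$ by $(1 - g) x (1 - g)$ with $x \in M_n (B)_{+}$, as in Step~2, and absorb the $g$-part using a nonzero $c \precsim a$ coming from pure positivity of~$a$. Passing to $(b_k - \dt_k)_{+}$ and using Step~2 in~$B$, arrange $\cc{b_k}_B$ increasing, with each $\cc{b_k}_B$ purely positive. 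Then $\io_{*} \bigl( \sup_k \cc{b_k}_B \bigr) = \cc{a}_A$, because $\io_{*}$ preserves suprema of increasing sequences. Finally, the supremum is purely positive in~$B$: by Step~1 applied in reverse, if it were the class of a \pj{} in~$B$, its image would be compact in~$A$, while $\cc{a}_A$ is not compact.
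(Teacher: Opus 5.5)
The paper does not prove this statement; it quotes it from~\cite{Phl40}. So I judged your outline on its own terms. Steps~1 and~2 are sound, up to the tolerance bookkeeping you already flag. The contradiction in Step~1 does follow from simplicity and stable finiteness, and absorbing $g$ into a nonzero spectral piece $c$ of~$b$ is the right mechanism for order reflection.

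The gap is in Step~3, in the claim that you can ``arrange $\ccB{b_k}$ increasing, with each $\ccB{b_k}$ purely positive.'' To get an increasing sequence in $\Cu (B)$ you must turn inequalities such as $\ccA{(b_k - \dt)_{+}} \leq \ccA{(a - 1/j)_{+}} \leq \ccA{b_j}$ into inequalities in $\Cu (B)$. Step~2 does this only when $\ccB{b_j}$ is purely positive, and nothing in your construction provides that. The element you build is, in effect, the direct sum of $g$ with a cut-down of a $B$-approximant of $(1 - g) (a - 1/(k+1))_{+} (1 - g)$. Either summand, and hence $b_k$, may well be Cuntz equivalent in~$B$ to a projection. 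Passing to $(b_k - \dt_k)_{+}$ does not help: if $b_k = q$ is a projection, then $(q - \dt)_{+} = (1 - \dt) q$. When $\ccB{b_j}$ is a projection class, $\ccA{z} \leq \ccA{b_j}$ genuinely need not imply $\ccB{z} \leq \ccB{b_j}$; that is exactly the part of $\Cu (B) \to \Cu (A)$ the theorem excludes. So, as written, surjectivity is not established.

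The missing ingredient is a supply of elements of $B$ lying under prescribed elements of~$A$: for every nonzero $r \in M_n (A)_{+}$ there should be a nonzero $e \in M_n (B)_{+}$ with $e \precsim_A r$. This already follows from the parts of largeness you use. Apply largeness with $x = r / \| r \|$ and with $r^{1/2}$ in the finite set. If the resulting $g$ is nonzero, take $e = g$, since $g \precsim_A x$. If $g = 0$, the approximant $c_1$ of $r^{1/2}$ lies in $M_n (B)$, and $e = (c_1 c_1^{*} - \| r \| / 2)_{+}$ works.

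The repair then goes as follows.
\begin{enumerate}
\item Since $0$ is a limit point of $\spec (a)$, reserve one more nonzero spectral piece $f' (a)$, orthogonal both to $(a - 1/(k+1))_{+}$ and to the piece used to absorb~$g$.
\item Choose such an $e \precsim_A f' (a)$, and use Lemma~\ref{L_3618_Sp01} in the simple non-Type~I algebra $\overline{e M_n (B) e}$ to get $e'$ with $\spec (e') = [0, 1]$.
\item Replace $b_k$ by $b_k \oplus e'$.
\end{enumerate}
You still have $(a - 1/k)_{+} \precsim_A b_k \oplus e' \precsim_A a$. Now $\ccB{b_k \oplus e'}$ is purely positive, because in the stably finite simple algebra~$B$ any class plus a purely positive class is purely positive; this is your Step~1 argument again. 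With that in place, your compact-containment bookkeeping produces the increasing sequence.

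Finally, the last sentence of Step~3 needs no compactness. If $\sup_k \ccB{b_k}$ were $\ccB{q}$ for a projection~$q$, then $\ccA{a} = \ccA{q}$ would be a projection class, which is a contradiction.
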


The following corollary is immediate.

\begin{cor}\label{C_6318_LgCmp}
Let the assumptions be as in Theorem~\ref{T_6318_Isom_CuPl}.
Then for any $x \in (A \otimes K)_{+}$ for which
$\ccA{x}$ is not the class of a projection in $\Cu (A)$,
there is a $y \in (B \otimes K)_{+}$ such that $x \sim_{A} y$
and $\ccB{y}$ is not the class of projection in $\Cu (B)$.
\end{cor}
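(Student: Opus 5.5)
The plan is to read the corollary off Theorem~\ref{T_6318_Isom_CuPl}, using only surjectivity of $\io_{*}$ on the purely positive part. Let $x \in (A \otimes K)_{+}$ with $\ccA{x}$ not the class of a projection. There are two cases.

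If $x = 0$, then $\ccA{x} = 0$ is the class of the zero projection. The hypothesis therefore excludes this case, and in any event the choice $y = 0$ would work formally. So assume $x \neq 0$. Then $\ccA{x} \in \Cu_{+}(A)$ by Notation~\ref{N_6315_Cup}.

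By Theorem~\ref{T_6318_Isom_CuPl}, the map $\io_{*}$ is a bijection from $\Cu_{+}(B) \cup \set{0}$ onto $\Cu_{+}(A) \cup \set{0}$. Hence there is $\et \in \Cu_{+}(B) \cup \set{0}$ with $\io_{*}(\et) = \ccA{x}$.

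\begin{itemize}
\item Since $\io_{*}(0) = 0 \neq \ccA{x}$, we have $\et \neq 0$, so $\et \in \Cu_{+}(B)$.
\item Represent $\et$ as $\et = \ccB{y}$ with $y \in (B \otimes K)_{+}$. Membership in $\Cu_{+}(B)$ means exactly that $\ccB{y}$ is not the class of a projection in $\Cu (B)$.
\item By definition, $\io_{*}\ccB{y}$ is the class in $\Cu (A)$ of $(\io \otimes \id_K)(y)$, which is $y$ regarded as an element of $(A \otimes K)_{+}$. So $\ccA{y} = \ccA{x}$, that is, $x \sim_A y$.
\end{itemize}

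There is no real obstacle. The only point needing care is that $\io_{*}$ on $\Cu$ is induced by $\io \otimes \id_K \colon B \otimes K \to A \otimes K$, so that the image of $\ccB{y}$ really is the $A$-class of the same element $y$.
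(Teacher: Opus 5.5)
Your argument is correct and is exactly what the paper means by calling the corollary immediate: take the surjectivity of $\io_{*}$ onto $\Cu_{+}(A) \cup \set{0}$ from Theorem~\ref{T_6318_Isom_CuPl}, note that the preimage of the nonzero class $\ccA{x}$ cannot be $0$, and use that $\io_{*}$ sends $\ccB{y}$ to $\ccA{y}$. The only slip is your aside that $y = 0$ would ``work formally'' when $x = 0$; it would not, since $\ccB{0}$ is the class of the zero projection, but that case is excluded by hypothesis, so nothing depends on it.
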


The appropriate specialization of Lemma~\ref{L_3618_Sp01} is as follows.

\begin{lem}\label{L_3618_LgSp01}
Let $A$ be an infinite dimensional stably finite simple
unital C*-algebra, let $B \subseteq A$ be a subalgebra
which is large in the sense of Definition~4.1 of~\cite{Phl40}
(possibly $B = A$),
and let $C \S B$ be a nonzero \hsa.
Then there exists $a \in C_{+}$ such that
$\spec (a) = [0, 1]$.
\end{lem}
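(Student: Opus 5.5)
The plan is to reduce everything to Lemma~\ref{L_3618_Sp01}, applied to the algebra $C$ itself. The hypotheses are that $C$ is a nonzero hereditary subalgebra of $B$, and that $B$ is large in $A$ (with $A$ infinite dimensional, simple, stably finite and unital). So I need to show that $C$ is simple and not of Type~I. Lemma~\ref{L_3618_Sp01} then gives $a \in C_{+}$ with $\spec (a) = [0, 1]$, which is the claim.

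\textbf{Simplicity.} By the results of~\cite{Phl40} on large subalgebras (Proposition~5.2 there), a large subalgebra of an infinite dimensional simple unital \ca{} is itself simple; when $B = A$ this is the hypothesis. A hereditary subalgebra of a simple \ca{} is simple, so $C$ is simple.

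\textbf{$C$ is not of Type~I.} I will first show that $B$ is not of Type~I, and then pass to $C$.
\begin{itemize}
\item[(i)] \emph{$B$ is infinite dimensional.} $B$ is unital and simple, and $A$ is infinite dimensional, simple and stably finite. Phillips shows that a large subalgebra of an infinite dimensional simple unital algebra is infinite dimensional. One route: $B \S A$ contains nonzero elements Cuntz below arbitrarily small elements of $A$, which is impossible if $B \cong M_k$.
\item[(ii)] \emph{$B$ is not of Type~I.} A simple unital \ca{} of Type~I is a simple unital GCR algebra. Such an algebra has an irreducible representation $\pi$ with $\pi (B) \supseteq K(H)$. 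Since $\pi$ is unital, $K(H) \cap \pi (B)$ is a nonzero ideal of $\pi (B) \cong B$, so simplicity gives $\pi (B) = K(H)$. A unital algebra equal to $K(H)$ forces $H$ finite dimensional, so $B$ is finite dimensional, contradicting~(i).
\item[(iii)] \emph{$C$ is not of Type~I.} Since $B$ is simple, $C$ is full in $B$, so $C$ is stably isomorphic to $B$ by Brown's theorem. Being of Type~I passes to hereditary subalgebras and is preserved under stable isomorphism, so $C$ is not of Type~I.
\end{itemize}

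The only step requiring care is~(i), that $B$ is infinite dimensional. Since the paper takes Phillips's framework as given, I would cite the appropriate statement in~\cite{Phl40}; Proposition~5.5 there, or the remark that $B$ is infinite dimensional, should cover it. The rest is standard structure theory.
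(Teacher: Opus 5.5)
Your proposal is correct and follows the paper's proof. The paper argues that $B$ is simple and infinite dimensional by Propositions~5.2 and~5.5 of~\cite{Phl40}, hence (being unital) not of Type~I, so the nonzero hereditary subalgebra $C$ is simple and not of Type~I and Lemma~\ref{L_3618_Sp01} applies; you fill in details the paper leaves implicit. One small nit: Brown's theorem needs $\sigma$-unitality, which $C$ need not have without separability, so either first replace $C$ by $\overline{cCc}$ for some $c \in C_{+} \setminus \{0\}$, or use that a full hereditary subalgebra is always Morita equivalent to the ambient algebra.
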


\begin{proof}
The algebra $B$ is simple by Proposition~5.2 of~\cite{Phl40},
infinite dimensional by Proposition~5.5 of~\cite{Phl40},
and unital.
Therefore $B$ is not of Type~I, so $C$ is also is not of Type~I.
Thus Lemma~\ref{L_3618_Sp01} applies.
\end{proof}

\subsection{Comparison for Large Subalgebras}\label{Sec_6314_Comp}

In this subsection, we show that a
large subalgebra has $m$-comparison if and only if
the containing algebra does.

\begin{thm}\label{T_6314_CompThm}
Let $A$ be a simple separable infinite dimensional stably
finite unital C*-algebra, let $B \S A$ be a subalgebra
which is large in the sense of Definition~4.1 of~\cite{Phl40},
and let $m \in \N$.
Then $B$ has $m$-comparison if and only if $A$ has $m$-comparison.
\end{thm}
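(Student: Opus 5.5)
The plan is to transfer the comparison inequalities through the isomorphism $\io_{*} \colon \Cu_{+}(B) \cup \{0\} \to \Cu_{+}(A) \cup \{0\}$ of Theorem~\ref{T_6318_Isom_CuPl}. The only work is to remove projection classes from the hypotheses and conclusions. Those are handled with Lemma~\ref{L_5Z26_InterpPj} and an approximation argument, using $(x - \ep)_{+}$ with $\spec (x) = [0,1]$.

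First I reduce to purely positive elements. Let $S$ be $\Cu (A)$ or $\Cu (B)$, and suppose the $m$-comparison implication holds whenever $\xi, \et_0, \ldots, \et_m \in S_{+} \cup \{0\}$. I claim it then holds in general.

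For the $\et_j$: if $\et_j = \cc{q}$ with $q$ a nonzero projection, then by simplicity and non-Type~I (Lemma~\ref{L_3618_LgSp01}) I can find an element of $\Cu_{+}$ equal to $\cc{q}$. For instance, take $z \in (q (A \otimes K) q)_{+}$ with $\spec (z) = [0,1]$. Then $\cc{z} = \cc{q}$ fails in general (by finiteness $\cc{z} \neq \cc{q}$), so instead I use $\cc{z} \le \cc{q}$ with $z$ arbitrarily close to $q$ in Cuntz sense. Concretely, $(k+1)\xi \le k\et_j$ is preserved if $\et_j$ is replaced by a purely positive $\et_j' \le \et_j$ with $(k+1)\xi \le k\et_j'$. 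This is where Lemma~\ref{L_5Z26_InterpPj} enters: write $\cc{p} \le \cc{(x-\ep)_+} + \cc{(y-\ep)_+}$ with $x \precsim p$ and $y$ small. Absorbing the small $y$ costs only a factor that the strict relation $\les$ tolerates after enlarging $k$.

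For $\xi$: if $\xi$ is a projection class, then $\xi \ll \xi$, and $(k+1)\xi \le k \et_j$ with $\et_j$ purely positive gives $(k+1)\xi \le k \et_j''$ for some $\et_j'' \ll \et_j$. By Lemma~\ref{L_5Z28_CptCont}, $\et_j''$ can be taken of the form $\cc{(a_j - \dt)_+}$, which leaves room to spare. Then I replace $\xi$ by $\cc{x}$ and $\cc{y}$ from Lemma~\ref{L_5Z26_InterpPj}, apply comparison to each, and sum. Since $N\cc{y}$ is below the spare room, the $y$-part is absorbed. If $\xi = \cc{(a-\ep)_+}$ for a general purely positive element, the conclusion for all $\ep$ gives it for $\xi$ by taking suprema.

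Next comes the transfer. Suppose $B$ has $m$-comparison, and take $\xi, \et_j \in \Cu_{+}(A) \cup \{0\}$ with $\xi \les \et_j$. By Corollary~\ref{C_6318_LgCmp}, pull these back to $\Cu_{+}(B) \cup \{0\}$. Since $\io_{*}$ is an order and semigroup isomorphism there, and sums of purely positive classes are purely positive, the relations $(k+1)\xi \le k\et_j$ pull back. Apply comparison in $B$ and push forward. The converse direction is symmetric, using $\io_{*}$ in the other direction. I also need that $\Cu_{+}(B)$ and $\Cu_{+}(A)$ are closed under addition and under the multiples appearing. In a simple stably finite algebra, $\cc{a} + \cc{b}$ is a projection class only if both summands are, so this holds.

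The main obstacle is the reduction step: handling projection classes among $\xi$ and the $\et_j$ without losing the strict inequalities. I would try to avoid it as much as possible by noting that the proof of Proposition~\ref{CP_6325_WA_Cmp} already shows it suffices to treat $\xi$ of the form $\cc{(a-\ep)_+}$, and by using the room given by $\les$. The case where $\xi$ is a nonzero projection class needs Lemma~\ref{L_5Z26_InterpPj} with $N$ chosen larger than the $k_j$ involved. After that, the remainder of the argument is formal.
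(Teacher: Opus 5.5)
Your proposal is correct in strategy and is organized differently from the paper's proof. You split the argument into two steps. The first is an intrinsic reduction: in any simple stably finite unital non-Type~I algebra, $m$-comparison for classes in $\Cu_{+} \cup \{0\}$ already implies $m$-comparison. This applies to both $A$ and $B$, since $B$ is simple and infinite dimensional by Propositions~5.2 and~5.5 of~\cite{Phl40}, and stably finite because it contains $1_A$. The second is a purely formal transfer through the isomorphism of Theorem~\ref{T_6318_Isom_CuPl}. This is legitimate because $\Cu_{+} \cup \{0\}$ is closed under addition, and your spectral argument for that is right.

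The paper instead interleaves the two steps. It pulls representatives back with Corollary~\ref{C_6318_LgCmp}, carries cut-downs $(y - \ep)_{+}$ and auxiliary elements such as $z_j = (y - \ep)_{+} + c$ through both directions, and redoes the projection removal separately in $A$ and in~$B$. The ingredients are the same. In particular, for a projection class $\et_j$, you and the paper both pass to $\cc{x}$ from Lemma~\ref{L_5Z26_InterpPj}, at the cost of replacing $k$ by $2k$. Your factorization makes the symmetry of the two directions evident and shows that the passage between $A$ and $B$ needs no cut-downs.

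The one step to tighten is a projection class $\xi = \cc{p}$. As literally written (``apply comparison to each, and sum''), it gives only $\xi \leq 2 \sum_j \et_j$. The $y$-part must instead be bounded by a nonzero class that is genuinely left over. Your spare-room idea works if made explicit, as follows.
\begin{enumerate}
\item First make all $\et_j = \cc{a_j}$ purely positive.
\item Use $\xi \ll \xi$ to get $\dt > 0$ with $(k_j + 1) \cc{p} \leq k_j \cc{(a_j - \dt)_{+}}$ for all~$j$.
\item Choose $g$ supported in $(0, \dt)$ with $g (a_0) \neq 0$; this is possible because $a_0$ is already purely positive.
\item Take $y$ in Lemma~\ref{L_5Z26_InterpPj} with $\cc{y} \leq \cc{g (a_0)}$.
\item Apply comparison only to $\cc{x}$, after again removing any projection classes among the $\cc{(a_j - \dt)_{+}}$.
\end{enumerate}
This gives $\cc{p} \leq \cc{x} + \cc{y} \leq \sum_j \cc{(a_j - \dt)_{+}} + \cc{g (a_0)} \leq \sum_j \et_j$.

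The paper's route is shorter. Lemma~\ref{L_3618_Lg36} gives purely positive $\mu, \kp$ with $\mu \leq \xi \leq \mu + \kp$ and $(2 k + 2) \kp \leq \zeta$, where $\zeta$ is a nonzero class with $\zeta \leq \et_j$ for all~$j$. Then $(2 k_j + 2) (\mu + \kp) \leq (2 k_j + 1) \et_j$, so $\xi$ is replaced by the single larger purely positive class $\mu + \kp$ with $\mu + \kp \les \et_j$, and no spare room is needed. Your closing remark, about applying Lemma~\ref{L_5Z26_InterpPj} with $N$ larger than the $k_j$, points to exactly this version.
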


\begin{proof}
We begin by showing that $m$-comparison for $\Cu (B)$ implies
$m$-comparison for $\Cu (A)$.
Let $x, a_{0}, a_{1}, \ldots, a_{m} \in (A \ten K)_{+}$
satisfy $\cc{x}_A \les \cc{a_{j}}_A$ for
$j = 0, 1, \ldots, m$.
We must show that
\begin{equation}\label{Eq_6315_Cmp_to_pr}
\cc{x}_A \leq \sum_{j = 0}^m \cc{a_j}_A.
\end{equation}
If $\cc{x}_A = 0$, this is immediate, so we assume
that $\cc{x}_A \neq 0$.
By the definition of $\les$ (Definition~\ref{LeqS}),
for $j = 0, 1, \ldots, m$ there is $k_j \in \N$
such that
\begin{equation}\label{Eq_6318_M_SubS}
(k_j + 1) \ccA{x} \leq k_j \ccA{a_j}.
\end{equation}

First assume $\cc{x}_A$ is not the class of a projection.
Use Corollary~\ref{C_6318_LgCmp}
to choose $y \in (B \otimes K)_{+}$ such that $\ccA{y} = \ccA{x}$
and $0$ is a limit point of $\spec (y)$.
Let $\varepsilon > 0$ be arbitrary. For $j = 0, 1, \ldots, m$,
we will find $b_j \in (B \otimes K)_{+}$ such that
\begin{equation}\label{Eq_6318_L}
\ccB{(y - \ep)_{+}} \les \ccB{b_j}
\andeqn
\ccA{b_j} \leq \ccA{a_j}.
\end{equation}
Since $0$ is a limit point of $\spec (y)$,
there is $\rh \in (0, \ep)$ such that
$\spec (y) \cap (\rh, \ep) \neq \E$.
Choose $\ld \in \spec (y) \cap (\rh, \ep)$.
Choose a \cfn{} $f \colon [0, \I) \to [0, 1]$ such that
$\supp (f) \S (\rh, \ep)$ and $f (\ld) \neq 0$.

Define
\[
I = \bigl\{ j \in \{0, 1, \ldots, m\} \colon
 {\mbox{$\cc{a_j}_A$ is the class of a nonzero projection}} \bigr\}.
\]

For $j \not\in I$, Corollary~\ref{C_6318_LgCmp} provides
$b_j \in (B \ten K)_{+}$ such that
$\cc{b_j}_A = \cc{a_j}_A$ in $\Cu (A)$.
Use Lemma~\ref{L_3618_LgSp01} to choose
$c \in \bigl( {\overline{f (y) (B \otimes K) f (y)}} \bigr)_{+}$
such that $\spec (c) = [0, 1]$.
Set $z_j = (y - \ep)_{+} + c$.
Then $(y - \ep)_{+} \precsim_B z_j \precsim_B y$.
We have
\[
(k_j + 1) \ccA{z_j}
 \leq (k_j + 1) \ccA{y}
 = (k_j + 1) \ccA{x}
 \leq k_j \ccA{a_j}
 = k_j \ccA{b_j}.
\]
Since $0$ is a limit point of both $\spec (z_j)$ and $\spec (b_j)$,
Theorem~\ref{T_6318_Isom_CuPl} gives the second step in
the calculation
\[
(k_j + 1) \ccB{(y - \ep)_{+}}
 \leq (k_j + 1) \ccB{z_j}
 \leq k_j \ccB{b_j}.
\]
It follows that $\ccB{(y - \ep)_{+}} \les \ccB{b_j}$,
and we already have $\cc{b_j}_A = \cc{a_j}_A$.
So~(\ref{Eq_6318_L}) holds.

Now suppose $j \in I$.
We may as well assume that $a_j$ is a \pj.
Use Lemma~\ref{L_5Z26_InterpPj} to choose
$r_j, s_j \in (a_j (A \otimes K) a_j)_{+}$ and $\dt \in (0, 1)$
such that
\[
\spec (r_j) = \spec (s_j) = [0, 1],
\qquad
r_j \precsim_A a_j,
\]
\[
a_j \precsim_A (r_j - \dt)_{+} \oplus (s_j - \dt)_{+},
\qquad {\mbox{and}} \qquad
2 k_j \ccA{s_j} \leq \ccA{(r_j - \dt)_{+}}.
\]
Corollary~\ref{C_6318_LgCmp} yields $b_j \in (B \otimes K)_{+}$
such that $0$ is a limit point of $\spec (b_j)$ and $\ccA{b_j} = \ccA{r_j}$.
Use Lemma~2.6 of~\cite{Phl40} to choose
$d_j \in \bigl( {\overline{f (y) (A \otimes K) f (y)}} \bigr)_{+}
    \setminus \{ 0 \}$
such that $d_j \precsim_A s_j$.
Use Lemma~\ref{L_3618_LgSp01} to choose
$g_j \in \bigl( {\overline{d_j (A \otimes K) d_j}} \bigr)_{+}$
such that $\spec (g_j) = [0, 1]$.
Corollary~\ref{C_6318_LgCmp} yields $c_j \in (\Kt B)_{+}$
such that $0$ is a limit point of $\spec (c_j)$ and $\ccA{c_j} = \ccA{g_j}$.
Set $z_j = (y - \ep)_{+} \oplus c_j$.
We have $c_j \precsim_A f (y)$, so
\[
\begin{split}
(2 k_j + 2) \ccA{z_j}
& \leq (2 k_j + 2) \ccA{y}
  = (2 k_j + 2) \ccA{x}
  \leq 2 k_j \ccA{a_j}
\\
& \leq 2 k_j \ccA{(r_j - \dt)_{+}} + 2 k_j \ccA{(s_j - \dt)_{+}}
\\
& \leq (2 k_j + 1) \ccA{r_j}
  = (2 k_j + 1) \ccA{b_j}.
\end{split}
\]
Since $0$ is a limit point of both $\spec (z_j)$ and $\spec (b_j)$,
Theorem~\ref{T_6318_Isom_CuPl} gives the second step in
the calculation
\[
(2 k_j + 2) \ccB{(y - \ep)_{+}}
 \leq (2 k_j + 2) \ccB{z_j}
 \leq (2 k_j + 1) \ccB{b_j}.
\]
It follows that $\ccB{(y - \ep)_{+}} \les \ccB{b_j}$.
We already have $\ccA{b_j} = \ccA{r_j} \leq \ccA{a_j}$.
So~(\ref{Eq_6318_L}) holds in this case too.

Since (\ref{Eq_6318_L}) holds for $j = 0, 1, \ldots, m$
and $B$ has $m$-comparison, it follows that
\[
\ccB{(y - \ep)_{+}} \leq \sum_{j = 0}^m \ccB{b_j}.
\]
Using the second part of~(\ref{Eq_6318_L}), we get
\[
\ccA{(y - \ep)_{+}}
 \leq \sum_{j = 0}^m \ccA{b_j}
 \leq \sum_{j = 0}^m \ccA{a_j}.
\]
Since $\ep > 0$ is arbitrary, we get
\[
\ccA{y} \leq \sum_{j = 0}^m \ccA{a_j}.
\]
Combining this with $\ccA{x} = \ccA{y}$ gives~(\ref{Eq_6315_Cmp_to_pr})
and completes the proof of this direction in the case that $\ccA{x}$
is not the class of a \pj.

Now assume that $\cc{x}_A$ is the class of a nonzero
projection in $A \otimes K$.
Since $x \neq 0$, we have $a_j \neq 0$ for $j = 0, 1, \ldots, m$.
Use Lemma 2.6 of \cite{Phl40} to choose
$a \in (A \otimes K)_{+} \setminus \{0\}$ such that
$\cc{a}_A \leq \cc{a_j}_A$ for $j = 1, 2, \hdots, m$.
Set $k = \max ( k_0, k_1, \ldots, k_m )$.
By Lemma~\ref{L_3618_Lg36}, there exist $\mu, \eta \in \Cu_{+} (A)$
such that
\[
\mu \leq \cc{x}_A \leq \mu + \eta
\andeqn
(2 k + 2) \eta \leq \cc{a}_A.
\]
For $j = 0, 1, \hdots, m$, we then have,
also using (\ref{Eq_6318_M_SubS}) at the second step,
\[
\begin{split}
(2 k_j + 2) (\mu + \eta)
& \leq (2 k_j + 2) \cc{x}_A + (2 k_j + 2) \eta
\\
& \leq 2 k_j \cc{a_j}_A + \cc{a}_A
  \leq 2 k_j \cc{a_j}_A + \cc{a_j}_A
  = (2 k_j + 1) \cc{a_j}_A.
\end{split}
\]
Hence $\mu + \eta \les \cc{a_j}_A$.
Since $\mu + \eta \in \Cu_{+} (A)$, the previous case yields
the second step in the calculation
\[
\cc{x}_A \leq \mu + \eta \leq \sum_{j = 0}^m \cc{a_j}_A,
\]
which is~(\ref{Eq_6315_Cmp_to_pr}).
This completes the proof that if $\Cu (B)$ has
$m$-comparison, then so does $\Cu (A)$.

The reverse implication uses the same pieces, but arranged differently.
Assume that $\Cu (A)$ has $m$-comparison;
we show that $\Cu (B)$ has $m$-comparison as well.
To this end, let
$y, b_0, b_1, \ldots, b_m \in (B \otimes K)_{+}$ satisfy
$\cc{y}_B \les \cc{b_j}_B$ for $j = 0, 1, \ldots, m$.
\Wolog{} $y \neq 0$.
By the definition of $\les$ (Definition~\ref{LeqS}),
for $j = 0, 1, \ldots, m$ there is $k_j \in \N$
such that $(k_j + 1) \ccB{y} \leq k_j \ccB{b_j}$.
In particular, $b_j \neq 0$ for $j = 0, 1, \ldots, m$.

First assume $\cc{y}_B$ is not the class of a projection.
Let $\varepsilon > 0$ be arbitrary. We will find $z \in (B \otimes K)_{+}$
and for $j = 0, 1, \ldots, m$, we will find $a_j \in (B \otimes K)_{+}$
such that $0$ is a limit point of $\spec (z)$,
$(y - \ep)_{+} \precsim_B z$,
and, for $j = 0, 1, \ldots, m$, $\ccA{z} \les \ccA{a_j}$,
$0$ is a limit point of $\spec (a_j)$, and $\ccB{a_j} \leq \ccB{b_j}$.

Since $0$ is a limit point of $\spec (y)$,
there is $\rh \in (0, \ep)$ such that
$\spec (y) \cap (\rh, \ep) \neq \E$.
Choose $\ld \in \spec (y) \cap (\rh, \ep)$.
Choose a \cfn{} $f \colon [0, \I) \to [0, 1]$ such that
$\supp (f) \S (\rh, \ep)$ and $f (\ld) \neq 0$.
Use Lemma~\ref{L_3618_LgSp01} to choose
$c \in \bigl( {\overline{f (y) (B \otimes K) f (y)}} \bigr)_{+}$
such that $\spec (c) = [0, 1]$.
Set $z = (y - \ep)_{+} + c$.
Then $0$ is a limit point of $\spec (z)$ and
$(y - \ep)_{+} \precsim_B z \precsim_B y$.

Define
\[
I = \bigl\{ j \in \{0, 1, \ldots, m\} \colon
 {\mbox{$\cc{b_j}_B$ is the class of a nonzero projection}} \bigr\}.
\]

For $j \not\in I$, set $a_j = b_j$.
Then $0$ is a limit point of $\spec (a_j)$.
Also,
\[
\ccB{z} \leq \ccB{y}
\andeqn
(k_j + 1) \ccB{y} \leq k_j \ccB{a_j}.
\]
The second says that $\ccB{z} \les \ccB{a_j}$.

Now suppose $j \in I$.
We may as well assume that $b_j$ is a \pj.
Use Lemma~\ref{L_5Z26_InterpPj} to choose
$a_j, s_j \in (b_j (B \otimes K) b_j)_{+}$ and $\dt \in (0, 1)$
such that
\[
\spec (a_j) = \spec (s_j) = [0, 1],
\qquad
a_j \precsim_B b_j,
\]
\[
b_j \precsim_B (a_j - \dt)_{+} \oplus (s_j - \dt)_{+},
\qquad {\mbox{and}} \qquad
2 k_j \ccB{s_j} \leq \ccB{(a_j - \dt)_{+}}.
\]
Then
\[
\begin{split}
(2 k_j + 2) \ccB{z}
& \leq (2 k_j + 2) \ccB{y}
\\
& \leq 2 k_j \ccB{b_j}
  \leq 2 k_j \ccB{a_j} + 2 k_j \ccB{s_j}
  \leq (2 k_j + 1) \ccB{a_j}.
\end{split}
\]
Thus, here too $\ccB{z} \les \ccB{a_j}$,
$0$ is a limit point of $\spec (a_j)$,
and $\ccA{a_j} \leq \ccA{b_j}$ because $\ccB{a_j} \leq \ccB{b_j}$.

Passing to~$A$, we have
$\ccA{z} \les \ccA{a_j}$ for $j = 0, 1, \ldots, m$.
Since $A$ has $m$-comparison,
it follows that
\[
\ccA{z} \leq \sum_{j = 0}^m \ccA{a_j}.
\]
Since $0$ is a limit point of $\spec (z)$
and of $\spec (a_j)$ for $j = 0, 1, \ldots, m$,
Theorem~\ref{T_6318_Isom_CuPl} implies that
\[
\ccB{z} \leq \sum_{j = 0}^m \ccB{a_j}.
\]
Therefore
\[
\ccB{(y - \ep)_{+}}
 \leq \ccB{z}
 \leq \sum_{j = 0}^m \ccB{a_j}
 \leq \sum_{j = 0}^m \ccB{b_j}.
\]

We have proved that for all $\ep > 0$ we have
\[
\ccB{(y - \ep)_{+}} \leq \sum_{j = 0}^m \ccB{b_j}.
\]
So $\ccB{y} \leq \sum_{j = 0}^m \ccB{b_j}$.
This completes the case where $\ccB{y}$ is not the class of a \pj.

Now assume that $\cc{y}_B$ is the class of a nonzero
projection in $B \otimes K$.
Use Lemma 2.6 of \cite{Phl40} to choose
$b \in (B \otimes K)_{+} \setminus \{0\}$ such that
$\cc{b}_B \leq \cc{b_j}_B$ for $j = 0, 1, \hdots, m$.
Set $k = \max ( k_0, k_1, \ldots, k_m )$.
By Lemma~\ref{L_3618_Lg36}, there exist $\mu, \eta \in \Cu_{+} (B)$
such that
\[
\mu \leq \cc{y}_B \leq \mu + \eta
\andeqn
(2 k + 2) \eta \leq \cc{b}_B.
\]
For $j = 0, 1, \hdots, m$, we then have
\[
\begin{split}
(2 k_j + 2) (\mu + \eta)
& \leq (2 k_j + 2) \cc{y}_B + (2 k_j + 2) \eta
\\
& \leq 2 k_j \cc{b_j}_B + \cc{b}_B
  \leq 2 k_j \cc{b_j}_B + \cc{b_j}_B
  = (2 k_j + 1) \cc{b_j}_B.
\end{split}
\]
Hence $\mu + \eta \les \cc{b_j}_B$.
Since $\mu + \eta \in \Cu_{+} (B)$, the previous case yields
the second step in the calculation
\[
\cc{y}_B \leq \mu + \eta \leq \sum_{j = 0}^m \cc{a_j}_B.
\]
This completes the proof that if $\Cu (A)$ has
$m$-comparison, then so does $\Cu (B)$.
\end{proof}

\subsection{Almost Divisibility for Large Subalgebras}\label{Sec_6314_AD}
In this subsection, we prove that
if $A$ is a simple stably finite \uca{} and $B \S A$ is a
large subalgebra which is $m$-almost divisible, then $A$ is
$(2 m + 1)$-almost divisible.
In the opposite direction, if $A$ is
$m$-almost divisible, then $B$ is $(4m + 3)$-almost divisible.
This differs from the behavior of $m$-comparison
as proved in Section~\ref{Sec_6314_Comp},
where the parameter $m$ remains unchanged when
passing between $A$ and~$B$.
We don't know whether the stronger version is true for
$m$-almost divisibility.

\begin{lem}\label{L_5Z29_CC_down}
Let $A$ be an infinite dimensional simple stably finite \uca,
and let $B \S A$ be a subalgebra
which is large in the sense of Definition~4.1 of~\cite{Phl40}.
Let $\io \colon B \to A$ be the inclusion map.
Let $\xi, \et \in \Cu (B)$ satisfy $\io_* (\xi) \ll \io_* (\et)$.
Then $\xi \ll \et$.
\end{lem}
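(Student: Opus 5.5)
Write $\xi = \ccB{b}$ and $\et = \ccB{c}$ with $b, c \in (B \otimes K)_{+}$. By Lemma~\ref{L_5Z28_CptCont} (applied in $B$), it suffices to find $\dt > 0$ with $b \precsim_B (c - \dt)_{+}$. From $\ccA{b} \ll \ccA{c}$ and Lemma~\ref{L_5Z28_CptCont} in $A$, there is $\ep > 0$ with $b \precsim_A (c - \ep)_{+}$. The task is to move this subequivalence from $A$ into~$B$. Theorem~\ref{T_6318_Isom_CuPl} does this, but only for elements whose classes are not classes of projections. So the proof splits into cases according to which classes are classes of projections.

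\textbf{Case 1: $\ccB{c}$ is not the class of a projection.} Then $0$ is a limit point of $\spec (c)$. Choose $\dt \in (0, \ep)$ such that $\spec (c) \cap (\dt, \ep) \ne \E$. Take a nonzero \cfn{} $f$ supported in $(\dt, \ep)$ with $f(c) \neq 0$. By Lemma~\ref{L_3618_LgSp01}, pick $g \in \ov{f (c) (B \otimes K) f (c)}_{+}$ with $\spec (g) = [0, 1]$. Set $z = (c - \ep)_{+} + g$. Since $g \perp (c - \ep)_{+}$, we get $(c - \ep)_{+} \precsim_B z \precsim_B (c - \dt)_{+}$, and $\ccB{z}$ is not the class of a projection. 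If $\ccB{b}$ is also not the class of a projection, then $\ccA{b} \leq \ccA{z}$, and Theorem~\ref{T_6318_Isom_CuPl} gives $\ccB{b} \leq \ccB{z} \leq \ccB{(c - \dt)_{+}}$, as needed.

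If instead $\ccB{b} = \ccB{p}$ for a nonzero projection $p$, apply Lemma~\ref{L_5Z26_InterpPj} in $B$ with $\xi = \ccB{g}$. This gives $x, y \in p (B \otimes K) p$ with spectra $[0, 1]$ and
\[
p \precsim_B (x - \ep')_{+} \oplus (y - \ep')_{+}, \qquad y \precsim_B g .
\]
Then $\ccA{x} \leq \ccA{p} \leq \ccA{(c - \ep)_{+}}$, and both $x$ and $(c - \ep)_{+} \oplus 0$ have non-projection classes. The last point needs care if $\ccB{(c - \ep)_{+}}$ happens to be a projection class; in that case, replace $(c - \ep)_{+}$ by $(c - \ep)_{+} + g'$, where $g'$ lies in a smaller spectral window between $\dt$ and $\ep$. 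Theorem~\ref{T_6318_Isom_CuPl} then gives $x \precsim_B$ that element. Combining with $y \precsim_B g$, which sits in the orthogonal spectral window, we get
\[
p \precsim_B \text{(a cut-down of $c$ supported above $\dt$)} \precsim_B (c - \dt)_{+} .
\]
This orthogonal-window bookkeeping is routine but fiddly.

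\textbf{Case 2: $\ccB{c} = \ccB{q}$ for a projection $q$.} In $B$, the class $\ccB{q}$ is compact, so $\ccB{q} \ll \ccB{q}$. It then suffices to show $\xi \leq \et$, that is, to lift the inequality $\ccA{b} \leq \ccA{q}$ to $B$. If $\ccB{b}$ is not a projection class, use Lemma~\ref{L_5Z26_InterpPj} on $q$ to write $\ccB{q}$ as dominating a non-projection class $\ccB{x}$ with $\ccA{x} = \ccA{q}$ minus a small piece. Actually it is cleaner to argue as follows. $\ccA{b} \ll \ccA{q}$, so $\ccA{b} \leq \ccA{(q - 1/2)_{+}} = \ccA{q}$. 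Since $\ccA{b}$ is not a projection class, $\ccA{b} \neq \ccA{q}$. One then needs comparison-free lifting: use Theorem~6.8 of~\cite{Phl40} together with the standard fact from~\cite{Phl40} that $\ccA{b} \leq \ccA{q}$ with $b$ purely positive implies $\ccB{b} \leq \ccB{q}$. This should follow since $\ccB{q} = \sup$ of non-projection classes below it in the sense of Lemma~\ref{L_3618_Lg36}, i.e.\ $\ccB{q} \geq \mu$ for $\mu$ non-projection with $\ccA{\mu}$ arbitrarily close to $\ccA{q}$. If $\ccB{b}$ is a projection class $\ccB{p}$, one needs Murray--von Neumann subequivalence of projections to pass from $A$ to $B$; for large subalgebras, $K$-theory and projection order may differ, so I would handle this by reducing, via Lemma~\ref{L_5Z26_InterpPj}, to the non-projection pieces $x, y$ as in Case~1.

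I expect the main obstacle to be the projection cases, above all $b$ a projection with $\ccA{p} \leq \ccA{q}$. There, Theorem~\ref{T_6318_Isom_CuPl} says nothing directly, and the argument must rely on splitting $p$ into two non-projection pieces via Lemma~\ref{L_5Z26_InterpPj} and absorbing the small piece $y$ into the slack supplied by compact containment.
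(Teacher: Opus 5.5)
Your Case~1 is sound.

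\begin{itemize}
\item The first subcase, where $\ccB{b}$ and $\ccB{c}$ are both non-projection classes, is exactly the paper's proof. It uses a spectral window of $c$ in $(\dt, \ep)$, an element $g$ with spectrum $[0,1]$ from Lemma~\ref{L_3618_LgSp01}, $z = (c - \ep)_+ + g$, and Theorem~\ref{T_6318_Isom_CuPl}.
\item The second subcase, with $b = p$ a projection and $\ccB{c}$ not a projection class, also works. You need to choose from the start two disjoint windows in $(\dt, \ep)$ meeting $\spec(c)$, which is possible since $0$ is a limit point of $\spec(c)$. Put $g_1$ in one window and $g_2$ in the other. Apply Lemma~\ref{L_5Z26_InterpPj} with $N = 1$ and target $\ccB{g_2}$, and apply Theorem~\ref{T_6318_Isom_CuPl} to $x \precsim_A (c - \ep)_+ + g_1$. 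Together these give $p \precsim_B (c - \ep)_+ + g_1 + g_2 \precsim_B (c - \dt)_+$.
\end{itemize}
This second subcase does more than the paper, which simply writes $\xi = \ccB{p} \ll \ccB{p} \leq \et$ without justifying $\xi \leq \et$ in $\Cu(B)$.

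Case~2 has a genuine gap, and part of it cannot be closed.

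For $\ccB{b}$ not a projection class, the ``standard fact'' you invoke is not proved, and your heuristic does not supply it. A compact class such as $\ccB{q}$ is never the supremum of an increasing sequence of classes strictly below it. Lemma~\ref{L_3618_Lg36} only gives $\mu \leq \ccB{q} \leq \mu + \kp$ with $\kp$ small. To get from $\ccA{b} \leq \ccA{q}$ to $\ccA{b} \leq \io_*(\mu)$ you would have to absorb $\kp$. That requires a comparison or cancellation property of $A$ that is not among the hypotheses.

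For $\ccB{b} = \ccB{p}$ a projection class, no reduction to non-projection pieces can work, because the statement is false there. Here is a counterexample.
\begin{itemize}
\item Let $X = \Z_2$ be the $2$-adic integers with $h(x) = x + 1$, and let $A = C^*(\Z, X, h)$.
\item Fix $y \in \Z_2 \setminus \Z$, and let $B = C^*(\Z, X, h)_Y$ with $Y = \{0, y\}$. Since $Y$ meets each orbit at most once, $B$ is large by Theorem~7.10 of~\cite{Phl40}.
\item Fix $n$ with $y \notin 2^n \Z_2$, and let $r \in \{1, \ldots, 2^n - 1\}$ with $r \equiv y \pmod{2^n}$. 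Set $p = 1_{2^n \Z_2}$ and $q = 1_{y + 2^n \Z_2}$.
\item Then $q = u^r p u^{-r}$. So $\io_*(\ccB{p}) = \io_*(\ccB{q})$ is the class of a projection, hence compact, and $\io_*(\ccB{p}) \ll \io_*(\ccB{q})$.
\item The first-return towers over $2^m \Z_2 \cup (y + 2^m \Z_2)$, for $m \geq n$, exhibit $B$ as an AF algebra. Its Bratteli diagram $\Z^2 \to \Z^2 \to \cdots$ has connecting matrices $\begin{pmatrix} 1 & 0 \\ 1 & 2 \end{pmatrix}$ or $\begin{pmatrix} 2 & 1 \\ 0 & 1 \end{pmatrix}$, according as the coefficient of $2^m$ in $y$ is $0$ or $1$.
\item Both matrices fix $(1, -1) = [p]_0 - [q]_0$. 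Hence neither $\pm([p]_0 - [q]_0)$ is ever positive in $K_0(B)$. Since $B$ is AF, $p \not\precsim_B q$, so $\ccB{p} \not\ll \ccB{q}$.
\end{itemize}

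The paper's own proof has the same hole. Its sentence ``the conclusion follows the same way if $\et$ is the class of a projection'' needs $\xi \leq \et$ in $\Cu(B)$, which the example above refutes. However, the lemma is only used in Proposition~\ref{P_5Z28_Large_alm_dv}, and there $\xi, \et \in \Cu_{+}(B) \cup \{0\}$. So the right repair is to restrict the statement to $\et \in \Cu_{+}(B) \cup \{0\}$, which your Case~1 proves, and to drop Case~2 rather than try to prove it.
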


\begin{proof}
Choose $x, y \in B \otimes K$ such that such that
$\cc{x}_B = \xi$ and $\cc{y}_B = \et$.
If there is a
\pj{} $p \in B$ such that $\xi = \cc{p}_B$, then
\[
\xi = \cc{p}_B \ll \cc{p}_B = \xi \leq \et,
\]
which is the desired conclusion.
The conclusion follows the
same way if $\et$ is the class of a \pj.
Therefore we may
assume that $0$ is a limit point of both $\spec (x)$ and
$\spec (y)$.

Since $\cc{x}_A \ll \cc{y}_A$, Lemma~\ref{L_5Z28_CptCont}
provides $\ep > 0$ such that $x \precsim_A (y - \ep)_{+}$.
Since $0$ is a limit point of $\spec (y)$, there is
$\rh \in (0, \ep)$ such that
$\spec (y) \cap (\rh, \ep) \neq \E$.
Choose
$\ld \in \spec (y) \cap (\rh, \ep)$.
Choose a \cfn{} $f \colon [0, \I) \to [0, 1]$ such that
$\supp (f) \S (\rh, \ep)$ and $f (\ld) \neq 0$.
Use Lemma~\ref{L_3618_LgSp01} to choose
$c \in \bigl( {\overline{f (y) (B \otimes K) f (y)}} \bigr)_{+}$
such that $\spec (c) = [0, 1]$.
Then
\[
(y - \ep)_{+} \leq (y - \ep)_{+} + c \precsim_B (y - \rh)_{+}.
\]
It follows that
\[
x \precsim_A (y - \ep)_{+} \leq (y - \ep)_{+} + c.
\]
Since $0$ is a limit point of both $\spec (x)$
and $\spec \bigl( (y - \ep)_{+} + c \bigr)$,
Theorem~\ref{T_6318_Isom_CuPl} implies that
$x \precsim_B (y - \ep)_{+} + c$.
Combining this with
$(y - \ep)_{+} + c \precsim_B (y - \rh)_{+}$,
we get $x \precsim_B (y - \rh)_{+}$.
So
$\cc{x}_B \ll \cc{y}_B$ by
Lemma~\ref{L_5Z28_CptCont}.
\end{proof}

\begin{prp}\label{P_5Z28_Large_alm_dv}
Let $A$ be an infinite dimensional simple stably finite \uca,
and let $B \S A$ be a subalgebra which is large in the sense
of Definition~4.1 of~\cite{Phl40}.
Let $m \in \Nz$.
Suppose that $B$ is $m$-almost divisible.
Then $A$ is $(2 m + 1)$-almost divisible.
\end{prp}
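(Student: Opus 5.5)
The plan is to verify $(2m+1)$-almost divisibility of $\Cu (A)$ directly from Definition~\ref{def_almost_divisibility}. Let $\xi' \ll \xi$ in $\Cu (A)$ and $N \in \N$. We must find $\eta \in \Cu (A)$ with $N \eta \leq \xi$ and $\xi' \leq (N+1)(2m+2)\eta$. The idea is to transfer the problem into $B$ via Theorem~\ref{T_6318_Isom_CuPl} and Lemma~\ref{L_5Z29_CC_down}, divide there, and push the answer back to $A$. The factor $2$ in $2m+2 = 2(m+1)$ is the price for handling the case where $\xi$ (or $\xi'$) is the class of a projection.

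\textbf{Case 1: $\xi$ is not the class of a projection.} By Lemma~\ref{L_5Z28_CptCont}, choose $a \in (A\otimes K)_+$ with $\cc{a}_A = \xi$ and $\ep>0$ with $\xi' \leq \cc{(a-\ep)_+}_A$. The class $\cc{(a-\ep)_+}_A$ may be a projection class, so we enlarge it. As in the proof of Lemma~\ref{L_5Z29_CC_down}, pick $\rh \in (0,\ep)$ and $\ld \in \spec(a)\cap(\rh,\ep)$ (possible since $0$ is a limit point of $\spec(a)$), and let $f$ be supported in $(\rh,\ep)$ with $f(\ld)\neq 0$. Using Lemma~\ref{L_3618_Sp01}, take $c$ in the hereditary subalgebra generated by $f(a)$ with $\spec(c)=[0,1]$, and set $a' = (a-\ep)_+ + c$. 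Then $\cc{a'}_A \in \Cu_+(A)$ and $\cc{(a-\ep)_+}_A \leq \cc{a'}_A \ll \cc{a}_A$, since $a' \precsim (a-\rh)_+$.

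Corollary~\ref{C_6318_LgCmp} gives $x, y \in (B\otimes K)_+$ with $\cc{x}_A = \cc{a'}_A$ and $\cc{y}_A = \xi$, both of non-projection class. Lemma~\ref{L_5Z29_CC_down} then gives $\cc{x}_B \ll \cc{y}_B$. Almost divisibility of $B$ yields $\zeta \in \Cu(B)$ with $N\zeta \leq \cc{y}_B$ and $\cc{x}_B \leq (N+1)(m+1)\zeta$. Setting $\eta = \io_*(\zeta)$ gives $N\eta \leq \xi$ and $\xi' \leq (N+1)(m+1)\eta$, which is better than required.

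\textbf{Case 2: $\xi = \cc{p}_A$ for a nonzero projection $p$.} We may take $\xi' = \xi$. Apply Lemma~\ref{L_5Z26_InterpPj} to get $x, y \in (p(A\otimes K)p)_+$ and $\dt$ with $\spec(x) = \spec(y) = [0,1]$, $x \precsim p$, $p \precsim (x-\dt)_+ \oplus (y-\dt)_+$, and $N\cc{y}_A \leq \cc{(x-\dt)_+}_A$; use $\xi$ itself as the target for the bound on $y$. Now $\cc{(x-\dt)_+}_A \ll \cc{x}_A$ and $\cc{x}_A$ is a non-projection class, so the Case~1 argument (with $N$ replaced by $N$) gives $\eta \in \Cu(A)$ with
\[
N\eta \leq \cc{x}_A \leq \xi
\andeqn
\cc{(x-\dt)_+}_A \leq (N+1)(m+1)\eta.
\]
It remains to absorb $\cc{(y-\dt)_+}_A \leq \cc{y}_A$. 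Here we use $N\cc{y}_A \leq \cc{(x-\dt)_+}_A \leq (N+1)(m+1)\eta$. The hope is to obtain roughly $\cc{y}_A \leq (N+1)(m+1)\eta$; that is, $y$ should be bounded by the same multiple of $\eta$. Then
\[
\xi \leq \cc{(x-\dt)_+}_A + \cc{y}_A \leq 2(N+1)(m+1)\eta = (N+1)(2m+2)\eta,
\]
which is exactly $(2m+1)$-almost divisibility.

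\textbf{Main obstacle.} Deducing $\cc{y}_A \leq (N+1)(m+1)\eta$ from $N\cc{y}_A \leq (N+1)(m+1)\eta$ is not automatic, since $\Cu(A)$ need not have cancellation or comparison. The first fix to try is to reapply Lemma~\ref{L_5Z26_InterpPj} with a larger multiplier, choosing $y$ so that $\cc{y}_A \leq \cc{z}_A$ for some $z \in \Cu_+(A)$ with $(N+1)\cc{z}_A \leq \cc{(x-\dt)_+}_A$. We then build $\eta$ so that it also dominates a piece of $x$ Cuntz-above $z$, for example by dividing a non-projection element that contains orthogonal copies of $z$. If that fails, a fallback is to divide $(x-\dt)_+$ and $y$ separately in $B$ and take $\eta$ as the smaller divisor, using Lemma~2.6 of~\cite{Phl40} to put a common lower bound below both.
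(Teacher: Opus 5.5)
Your argument follows the paper's proof. The non-projection case goes through $B$ via Corollary~\ref{C_6318_LgCmp} and Lemma~\ref{L_5Z29_CC_down}. The projection case uses Lemma~\ref{L_5Z26_InterpPj} and loses a factor of~$2$. There is one harmless difference. You treat a projection class $\xi'$ lying below a non-projection class $\xi$ inside Case~1, by enlarging $(a-\ep)_+$ to a non-projection element. The paper instead sends every case involving a projection to the case $a = b = p$. Both work.

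However, the step you call the main obstacle is not an obstacle, and your write-up stops short for no reason. In $\Cu (A)$ every element is $\geq 0$ and addition preserves order. So for $N \geq 1$ you get $\cc{y}_A \leq \cc{y}_A + (N-1)\cc{y}_A = N \cc{y}_A \leq \cc{(x - \dt)_{+}}_A$, and no cancellation is needed. Therefore
\[
\xi = \cc{p}_A \leq \cc{(x - \dt)_{+}}_A + \cc{(y - \dt)_{+}}_A \leq 2 \cc{(x - \dt)_{+}}_A \leq (N + 1)(2 m + 2) \eta .
\]
Combined with $N \eta \leq \cc{x}_A \leq \xi$, this completes the proof. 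The paper does exactly this. It applies Lemma~\ref{L_5Z26_InterpPj} with $N = 1$, so that $y \precsim_A (x - \ep)_{+}$ is given directly.

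You should drop the proposed fixes. The ``smaller divisor'' fallback in particular goes the wrong way. Replacing $\eta$ by a common lower bound makes $\xi' \leq (N + 1)(2 m + 2)\eta$ harder to achieve, not easier.
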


\begin{proof}
Let $\io \colon B \to A$ be the inclusion map.
Let $k \in \N$.
Let $\xi, \et \in \Cu (A)$ satisfy $\xi \ll \et$.
Choose $a, b \in (A \otimes K)_{+}$ such that
$\cc{a}_A = \xi$ and $\cc{b}_A = \et$.

First suppose that $a = 0$ or $0$ is a limit point of
$\spec (a)$, and that $b = 0$ or $0$ is a limit point of $\spec (b)$.
We claim that in this case, we
actually get $\gm \in \Cu (A)$ such that
\begin{equation}\label{Eq_5Z28_Case_1}
k \gm \leq \et
\andeqn
\xi \leq (k + 1) (m + 1) \gm.
\end{equation}
This claim certainly implies the desired conclusion for
this case, which has merely $k \gm \leq \et$ and
$\xi \leq (k + 1) (2 m + 2) \gm$.

To prove the claim, use Corollary~\ref{C_6318_LgCmp}
to find $\xi_0, \et_0 \in \Cu (B)$
such that $\io_* (\xi_0) = \xi$ and $\io_* (\et_0) = \et$.
Then $\xi_0 \ll \et_0$ by Lemma~\ref{L_5Z29_CC_down}.
So the hypothesis provides $\gm_0 \in \Cu (B)$
such that $k \gm_0 \leq \et_0$ and
$\xi_0 \leq (k + 1) (m + 1) \gm_0$.
Set $\gm = \io_* (\gm_0)$.
Then~(\ref{Eq_5Z28_Case_1}) holds, as claimed.

Otherwise, one or both of $a$ and $b$ is nonzero but
its spectrum does not have $0$ as a limit point.
Therefore there is a \nzp{} $p \in A \otimes K$
such that $\cc{p}_A = \cc{a}_A$ or there is a \nzp{}
$p \in A \otimes K$ such that $\cc{p}_A = \cc{b}_A$.
In either case, it suffices to prove the desired
conclusion when $a = b = p$.

The case $N = 1$ of Lemma~\ref{L_5Z26_InterpPj} provides
$x, y \in (p (A \otimes K) p)_{+}$ and $\ep \in (0, 1)$
such that
\[
\spec (x) = \spec (y) = [0, 1],
\quad
x \precsim_A p,
\quad
p \precsim_A (x - \ep)_{+} \oplus (y - \ep)_{+},
\quad {\mbox{and}} \quad
y \precsim_A (x - \ep)_{+}.
\]
Lemma~\ref{L_5Z28_CptCont} tells us that
$\cc{(x - \ep)_{+}}_A \ll \cc{x}_A$.
Since $\ep < 1$,
it follows that $0$ is a limit point of
$\spec ((x - \ep)_{+})$ as well as of $\spec (x)$.
By the claim above (see~(\ref{Eq_5Z28_Case_1})),
there is $\gm \in \Cu (A)$ such that
\begin{equation}\label{Eq_5Z28_xme_x}
k \gm \leq \cc{x}_A
\andeqn
\cc{(x - \ep)_{+}}_A \leq (k + 1) (m + 1) \gm.
\end{equation}
Now
\[
k \gm \leq \cc{x}_A \leq \cc{p}_A
\]
and, using $y \precsim_A (x - \ep)_{+}$ at the second
step and the second part of~(\ref{Eq_5Z28_xme_x}) at the
third step,
\[
\cc{p}_A \leq \cc{(x - \ep)_{+}}_A +
\cc{(y - \ep)_{+}}_A
\leq 2 \cc{(x - \ep)_{+}}_A
\leq (k + 1) (2 m + 2) \gm.
\]
This completes the proof of the proposition.
\end{proof}

We now give the the analog of
Proposition~\ref{P_5Z28_Large_alm_dv} in the other direction.

\begin{prp}\label{P_5Z28_Large_alm_dv_C}
Let $A$ be an infinite dimensional simple stably finite \uca,
and let $B \S A$ be a subalgebra which is large in the sense
of Definition~4.1 of~\cite{Phl40}.
Let $m \in \Nz$.
Suppose that $A$ is $m$-almost divisible.
Then $B$ is $(4 m + 3)$-almost divisible.
\end{prp}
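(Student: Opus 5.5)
We mirror the proof of Proposition~\ref{P_5Z28_Large_alm_dv}, with the roles of $A$ and $B$ exchanged. Projection cases now have to be handled on both sides, and this is what drives the constant up to $4m+3$. Let $\io \colon B \to A$ be the inclusion, let $k \in \N$, and let $\xi, \et \in \Cu (B)$ satisfy $\xi \ll \et$. Choose $x, y \in (B \otimes K)_{+}$ with $\ccB{x} = \xi$ and $\ccB{y} = \et$.

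\textbf{Case 1.} Suppose $\xi$ and $\et$ are both in $\Cu_{+} (B) \cup \{0\}$. Then $\io_* (\xi) \ll \io_* (\et)$, and $A$ being $m$-almost divisible gives $\gm \in \Cu (A)$ with
\[
k \gm \leq \io_*(\et) \andeqn \io_*(\xi) \leq (k+1)(m+1)\gm .
\]
If $\gm$ is not the class of a projection, Corollary~\ref{C_6318_LgCmp} lets us pull $\gm$ back to some $\gm_0 \in \Cu_{+}(B)$. Since every class involved is then nonprojection, Theorem~\ref{T_6318_Isom_CuPl} transfers both inequalities to $\Cu (B)$.

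If $\gm = \ccA{q}$ for a projection $q$, we first replace $\gm$ by a nonprojection class. Apply Lemma~\ref{L_5Z26_InterpPj} with $N = 1$ to get $r, s$ with spectrum $[0,1]$ satisfying
\[
r \precsim_A q, \qquad q \precsim_A (r-\ep)_+ \oplus (s-\ep)_+, \qquad s \precsim_A (r-\ep)_+ .
\]
Then $\gm' = \ccA{r}$ is nonprojection, $k\gm' \leq \io_*(\et)$, and $\gm \leq 2\gm'$, so $\io_*(\xi) \leq (k+1)(2m+2)\gm'$. Pulling $\gm'$ back to $B$ via Theorem~\ref{T_6318_Isom_CuPl} shows that Case~1 yields parameter $2m+1$.

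\textbf{Case 2.} Suppose $\xi$ or $\et$ is the class of a nonzero projection. As in Proposition~\ref{P_5Z28_Large_alm_dv}, it suffices to treat $\xi = \et = \ccB{p}$ for a projection $p \in B \otimes K$. Apply Lemma~\ref{L_5Z26_InterpPj} in $B$ (with $N = 1$) to get $x', y' \in p(B\otimes K)p$ with spectrum $[0,1]$ and $\ep$ such that
\[
x' \precsim_B p, \qquad p \precsim_B (x'-\ep)_+ \oplus (y'-\ep)_+, \qquad y' \precsim_B (x'-\ep)_+ .
\]
Both $(x'-\ep)_+$ and $x'$ have $0$ as a limit point of their spectra, and $\ccB{(x'-\ep)_+} \ll \ccB{x'}$ by Lemma~\ref{L_5Z28_CptCont}. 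Case~1 then gives $\gm_0 \in \Cu (B)$ with
\[
k\gm_0 \leq \ccB{x'} \leq \ccB{p} \andeqn \ccB{(x'-\ep)_+} \leq (k+1)(2m+2)\gm_0 .
\]
Hence $\ccB{p} \leq 2\ccB{(x'-\ep)_+} \leq (k+1)(4m+4)\gm_0$, which is $(4m+3)$-almost divisibility.

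\textbf{Main obstacle.} The delicate point is Case~1, specifically ensuring that all elements lie in $\Cu_{+}$ so that Theorem~\ref{T_6318_Isom_CuPl} applies. The element $\io_*(\xi)$ is nonprojection, as is the replacement $\gm'$, which gives the inequality $\io_*(\xi) \leq (k+1)(2m+2)\gm'$. The class $(k+1)(2m+2)\gm'$ is a sum of nonprojection classes, hence itself nonprojection, because $0$ remains a limit point of the spectrum of a direct sum. Similarly $k\gm' \leq \io_*(\et)$ involves only nonprojection classes, or zero. We must also handle $\xi = 0$ trivially. The two doublings, once for a projection $\gm$ and once for a projection $p$, together account for the $4m+3$.
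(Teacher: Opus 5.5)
Your proof is correct and follows the paper's argument. It uses the same two cases, the same reduction to $a = b = p$ with Lemma~\ref{L_5Z26_InterpPj} applied in $B$, and the same two doublings giving $4m+3$. The only deviation is in the subcase of Case~1 where the divisor is a projection class: the paper applies divisibility of $A$ with $2k$ and enlarges $\cc{c}_A$ to the purely positive class $\mu + \kp \leq 2 \cc{c}_A$ via Lemma~\ref{L_3618_Lg36}, whereas you shrink $\gm$ to $\cc{r}_A$ with $\cc{r}_A \leq \gm \leq 2 \cc{r}_A$ via Lemma~\ref{L_5Z26_InterpPj}, which works equally well and avoids passing to $2k$.
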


\begin{proof}
The proof of this proposition is similar to
that of Proposition~\ref{P_5Z28_Large_alm_dv}.
Let $k \in \N$ and let $\xi, \et \in \Cu (B)$ satisfy
$\xi \ll \et$.
Choose $a, b \in (B \otimes K)_{+}$
such that $\cc{a}_B = \xi$ and $\cc{b}_B = \et$.

First suppose that $a = 0$ or $0$ is a limit point of
$\spec (a)$, and that $b = 0$ or $0$ is a limit point of $\spec (b)$.
In this case, we will prove the following stronger conclusion:
for every $k \in \N$ there is $\gm \in \Cu (B)$ such that
\begin{equation}\label{Eq_6318_Strong}
k \gm \leq \cc{b}_B \quad \text{and} \quad
\cc{a}_B \leq (k + 1) (2 m + 2) \gm.
\end{equation}
Since $\cc{a}_B \ll \cc{b}_B$ and \hm{s} preserve the
relation $\ll$ (this is obvious from Lemma~\ref{L_5Z28_CptCont}),
we have $\cc{a}_A \ll \cc{b}_A$ in $\Cu (A)$.
Using $m$-almost divisibility of $A$, we obtain
$c\in (A \otimes K)_{+}$ such that
\begin{equation}\label{Eq_A_div}
2 k \cc{c}_A \leq \cc{b}_A \quad \text{and} \quad
\cc{a}_A \leq (2 k + 1) (m + 1) \cc{c}_A.
\end{equation}

Suppose that $\cc{c}_A \in \Cu_{+}(A) \cup \set{0}$,
so that $2 \cc{c}_A \in \Cu_{+} (A) \cup \set{0}$.
By Corollary~\ref{C_6318_LgCmp}, there is $d \in (B \otimes K)_{+}$
such that $\ccA{d} = \ccA{c}$ and $0$ is a limit point of $\spec (d)$.
Using (\ref{Eq_A_div}), we have
\[
k \cc{d}_A \leq 2 k \cc{d}_A \leq \cc{b}_A
\]
and
\[
\cc{a}_A \leq (2 k + 1) (m + 1) \cc{d}_A
         \leq (k + 1) (2 m + 2) \cc{d}_A.
\]
By Theorem~\ref{T_6318_Isom_CuPl}, these relations
also hold for the classes in $\Cu (B)$.
This is (\ref{Eq_6318_Strong}) with $\gm = \cc{d}_B$.

Let us now suppose that $\cc{c}_A$ is a class of nonzero projection.
By Lemma~\ref{L_3618_Lg36}, there exist
$\mu, \kp \in \Cu_{+} (A)$ such that
\[
\mu \leq \cc{c}_A \leq \mu + \kp \quad \text{and}
\quad \kp \leq \cc{c}_A.
\]
Then $\mu + \kp \in \Cu_{+} (A)$, and from (\ref{Eq_A_div})
we see that
\[
k (\mu + \kp) \leq 2 k \cc{c}_A \leq \cc{b}_A \quad
\text{and} \quad \cc{a}_A \leq (k + 1) (2 m +2) (\mu + \kp).
\]
Corollary~\ref{C_6318_LgCmp} provides
$d \in (B \otimes K)_{+}$ such that $0$ is a limit point of $\spec(d)$
and $\cc{d}_A = \mu + \kp$, and Theorem~\ref{T_6318_Isom_CuPl} implies that
\[
k \cc{d}_B \leq \cc{b}_B
\andeqn
\cc{a}_B \leq (k + 1) (2 m + 2) \cc{d}_B.
\]
This is (\ref{Eq_6318_Strong}) with $\gm = \cc{d}_B$.

It remains to consider the case in which
one or both of $a$ and $b$ is nonzero but
its spectrum does not have $0$ as a limit point.
In this case, there is a \nzp{} $p \in B \otimes K$
such that $\cc{p}_B = \cc{a}_B$ or there is a \nzp{}
$p \in B \otimes K$ such that $\cc{p}_B = \cc{b}_B$.
In either case, $\ccB{a} \leq \ccB{p} \ll \ccB{p} \leq \ccB{b}$,
and it suffices to prove the desired
conclusion when $a = b = p$.
By Lemma~\ref{L_5Z26_InterpPj}, there exist
$x, y \in (p (B \otimes K) p)_{+}$ and $\varepsilon \in (0, 1)$
such that
\[
\spec (x) = \spec (y) = [0, 1], \quad x \precsim_B p, \quad
p \precsim_B (x - \ep)_{+} \oplus (y - \ep)_{+},
\quad {\mbox{and}}
\quad y \precsim_B (x - \ep)_{+}.
\]
Since $\cc{(x - \ep)_{+}}_B \ll \cc{x}_B$ and $0$ is a
limit point of $\spec ((x - \ep)_{+})$ as well as of
$\spec (x)$, by the previous case (see~(\ref{Eq_6318_Strong})),
we obtain $\gamma \in \Cu (B)$ such that
\begin{equation*}
k \gamma \leq \cc{x}_B \leq \cc{p}_B
\andeqn
\cc{(x - \ep)_{+}}_B \leq (k + 1) (2 m + 2) \gamma.
\end{equation*}
Now
\[
\cc{p}_B \leq 2 \cc{(x - \ep)_{+}}_B
\leq (k + 1) (4 m + 4) \gamma.
\]
Thus $B$ is $(4m + 3)$-almost divisible.
\end{proof}

\subsection{Pureness for Large Subalgebras}\label{Sec_6327_Pure}

We put the results of Section~\ref{Sec_6314_Comp}
and Section~\ref{Sec_6314_AD} together to get the the following result.

\begin{thm}\label{T_LargePure}
Let $A$ be an infinite dimensional simple stably finite \uca,
and let $B \S A$ be a subalgebra which is large in the sense
of Definition~4.1 of~\cite{Phl40}.
Then $A$ is pure if and
only if $B$ is pure.
\end{thm}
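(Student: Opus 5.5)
The plan is to assemble Theorem~\ref{T_LargePure} directly from the results of Sections~\ref{Sec_6314_Comp} and~\ref{Sec_6314_AD}, using Theorem~\ref{T_PureEquiv} to absorb the increase in the divisibility parameter. Note first that the hypotheses of Theorem~\ref{T_6314_CompThm} include separability, which is not among the hypotheses of Theorem~\ref{T_LargePure}. So one step will be to check that the proof of Theorem~\ref{T_6314_CompThm} never uses separability. I expect this to hold, since that proof only invokes Theorem~\ref{T_6318_Isom_CuPl}, Corollary~\ref{C_6318_LgCmp}, and Lemmas~\ref{L_3618_Lg36}, \ref{L_5Z26_InterpPj}, and~\ref{L_3618_LgSp01}, none of which requires separability. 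Alternatively, the statement can be read with the standing separability assumption of that theorem.

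For the direction ``$B$ pure implies $A$ pure'', suppose $B$ is pure, that is, $B$ is $(0,0)$-pure. Theorem~\ref{T_6314_CompThm} with $m = 0$ shows that $A$ has strict comparison. Proposition~\ref{P_5Z28_Large_alm_dv} with $m = 0$ shows that $A$ is $1$-almost divisible. Hence $A$ is $(0,1)$-pure, and Theorem~\ref{T_PureEquiv} upgrades this to pureness of~$A$.

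For the converse, suppose $A$ is pure. Theorem~\ref{T_6314_CompThm} again gives strict comparison for~$B$. Proposition~\ref{P_5Z28_Large_alm_dv_C} with $m = 0$ shows that $B$ is $3$-almost divisible. So $B$ is $(0,3)$-pure, and Theorem~\ref{T_PureEquiv} shows that $B$ is pure.

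There is essentially no obstacle here, since all the work was done earlier. The only points to double check are the hypothesis bookkeeping, namely separability as discussed above and the fact that infinite dimensionality of $A$ is assumed throughout, and that Theorem~\ref{T_PureEquiv} applies with $n = 0$. Here $\N$ in Definition~\ref{D_PureAlg} should be read to include $0$, consistent with the convention $m \in \Nz$ used in Propositions~\ref{P_5Z28_Large_alm_dv} and~\ref{P_5Z28_Large_alm_dv_C}.
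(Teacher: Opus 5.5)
Your proposal is correct and is exactly the paper's argument. Theorem~\ref{T_6314_CompThm} with $m = 0$ transfers strict comparison in both directions, Propositions~\ref{P_5Z28_Large_alm_dv} and~\ref{P_5Z28_Large_alm_dv_C} with $m = 0$ give $1$-almost divisibility of $A$ and $3$-almost divisibility of $B$, and Theorem~\ref{T_PureEquiv} then yields pureness.

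Your bookkeeping remarks are also sound. The paper silently drops the separability hypothesis of Theorem~\ref{T_6314_CompThm}, but its proof only uses ingredients that need no separability: Theorem~\ref{T_6318_Isom_CuPl}, Corollary~\ref{C_6318_LgCmp}, and the cited lemmas from~\cite{Phl40}. The convention $0 \in \N$ in Definition~\ref{D_PureAlg} is clearly intended, and in any case $0$-comparison trivially implies $n$-comparison for every~$n$.
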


\begin{proof}
If $B$ is pure, then $B$ has $0$-comparison and is $0$-almost
divisible.
So $A$ has $0$-comparison by
Theorem~\ref{T_6314_CompThm} and is $1$-divisible by
Proposition~\ref{P_5Z28_Large_alm_dv}.
Together, these imply $A$ is pure by Theorem~\ref{T_PureEquiv}.

Conversely, suppose that $A$ is pure.
Then $A$ has
$0$-comparison and is $0$-almost divisible.
So $B$ has $0$-comparison by Theorem~\ref{T_6314_CompThm}
and is $3$-almost divisible by Proposition~\ref{P_5Z28_Large_alm_dv_C}.
Together, these imply $B$ is pure by Theorem~\ref{T_PureEquiv}.
\end{proof}

\section{Pureness of Crossed Products by
Automorphisms of $C (X, D)$}\label{Sec_PureCrossProd}

We establish pureness for crossed product C*-algebras
$C^* \bigl( \Z, \, C (X, D), \, \af \bigr)$ where $D$
is a simple unital pure C*-algebra, $X$ is a compact metric
space, and $\af$ induces a minimal homeomorphism $h \colon X \to X$.
Consequently, such crossed
products will have stable rank one, and under suitable
conditions will also have real rank zero.

\begin{ntn}\label{N_4Y12_C0UD}
For a locally \chs~$X$
and a \ca~$D$,
we identify $C_0 (X, D) = C_0 (X) \otimes D$
in the standard way.
For an open subset $U \subseteq X$,
we use the abbreviation
\[
C_0 (U, D)
= \big\{ a \in C_0 (X, D) \colon
{\mbox{$a (x) = 0$ for all $x \in X \setminus U$}} \big\}
\subseteq C_0 (X, D).
\]
This subalgebra is of course canonically isomorphic to
the usual algebra $C_0 (U, D)$ when $U$ is considered
as a locally \chs{} in its own right.
\end{ntn}

In particular,
if $Y \subseteq X$ is closed, then
\begin{equation}\label{Eq_4Y12_C0XYD}
C_0 (X \setminus Y, \, D)
= \big\{ a \in C_0 (X, D) \colon
{\mbox{$a (x) = 0$ for all $x \in Y$}} \big\}.
\end{equation}

In Lemma~\ref{L_1_GetAuto}, we recall from~\cite{ArBcPh2}
a general construction for automorphisms of $C_0 (X,D)$
which lie over homeomorphisms of $X$ in the sense
of Definition~1.2 of~\cite{ArBcPh2}. (This means that there
is a homeomorphism $h \colon X \to X$ such that the
corresponding action of $\Z$ on $C_0 (X)$ is generated by
$f \mapsto f \circ h^{-1}$, for $f \in C_0 (X)$).
This will be used repeatedly in the examples of
Section~\ref{Sec_Examples}.

\begin{lem}[Lemma 1.4 of \cite{ArBcPh2}]\label{L_1_GetAuto}
Let $X$ be a locally \chs, let $h \colon X \to X$ be a \hme,
and let $D$ be a \ca.
Then there is a one to one correspondence
between actions of $\Z$ on $C_0 (X, D)$ that lie over $h$ and
continuous functions from $X$ to $\Aut (D)$, given as follows.

For any function $x \mapsto \af_{x}$ from $X$ to $\Aut (D)$
such that $x \mapsto \af_{x} (d)$ is \ct{} for all $d \in D$,
there is an automorphism $\af \in \Aut ( C_0 (X, D))$
given by $\af (a) (x) = \af_x (a (h^{-1} (x)))$ for all
$a \in C_0 (X, D)$ and $x \in X$, and this automorphism lies
over~$h$.

Conversely, if $\af \in \Aut ( C_0 (X, D))$ lies over~$h$,
then there is a function $x \mapsto \af_{x}$ from $X$ to
$\Aut (D)$ such that $x \mapsto \af_{x} (d)$ is \ct{} for
all $d \in D$ and such that
$\af (a) (x) = \af_x (a (h^{-1} (x)))$ for all
$a \in C_0 (X, D)$ and $x \in X$.
\end{lem}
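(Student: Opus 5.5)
The plan is to prove both directions of the correspondence of Lemma~\ref{L_1_GetAuto} directly, using the identification $C_0 (X, D) = C_0 (X) \otimes D$ from Notation~\ref{N_4Y12_C0UD}. Here ``lies over $h$'' means that $\af (f a) = (f \circ h^{-1}) \af (a)$ for $f \in C_0 (X)$ acting as central multipliers on $C_0 (X, D)$.

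\emph{From a family to an automorphism.} Given $x \mapsto \af_x$ with $x \mapsto \af_x (d)$ continuous for each $d$, define $\af (a) (x) = \af_x (a (h^{-1} (x)))$. First check that $\af (a)$ is continuous and vanishes at infinity. The vanishing is clear since $\| \af (a) (x) \| = \| a (h^{-1} (x)) \|$. For continuity, use
\[
\| \af_x (a (h^{-1} (x))) - \af_{x_0} (a (h^{-1} (x_0))) \|
\leq \| a (h^{-1} (x)) - a (h^{-1} (x_0)) \|
 + \| \af_x (d_0) - \af_{x_0} (d_0) \|
\]
with $d_0 = a (h^{-1} (x_0))$, because each $\af_x$ is isometric. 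The map $\af$ is clearly a $*$-homomorphism. Its inverse is $\bt (b) (y) = \af_{h (y)}^{-1} (b (h (y)))$. To see that this is continuous, one needs point-norm continuity of $x \mapsto \af_x^{-1}$. I would get this from
\[
\af_x^{-1} (d) - \af_{x_0}^{-1} (d) = \af_x^{-1} \bigl( d - \af_x (\af_{x_0}^{-1} (d)) \bigr),
\]
whose norm equals $\| \af_{x_0} (e) - \af_x (e) \|$ with $e = \af_{x_0}^{-1} (d)$, and this tends to zero. Finally, on elementary tensors $\af (f \otimes d) (x) = f (h^{-1} (x)) \af_x (d)$, which gives the lying-over property.

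\emph{From an automorphism to a family.} Let $\af$ lie over $h$. For $x \in X$, consider the ideal $I_x = C_0 (X \SM \{x\}, D)$, which is $\overline{C_0 (X \SM \{x\}) \cdot C_0 (X, D)}$. The covariance relation gives $\af (I_{h^{-1} (x)}) = I_x$, because $f$ vanishes at $h^{-1}(x)$ exactly when $f \circ h^{-1}$ vanishes at $x$. Evaluation identifies $C_0 (X, D) / I_y \cong D$, so $\af$ induces an isomorphism $\af_x \colon D \to D$ determined by
\[
\af_x (a (h^{-1} (x))) = \af (a) (x).
\]
This is well defined and surjective because evaluation is surjective and $\af$ maps $I_{h^{-1} (x)}$ onto $I_x$. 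Injectivity follows by applying the same reasoning to $\af^{-1}$. For continuity, fix $d$ and $x_0$, and take $f \in C_0 (X)$ equal to $1$ near $h^{-1} (x_0)$. Then near $x_0$ we have $\af_x (d) = \af (f \otimes d) (x)$, which is continuous in $x$.

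The main obstacle I expect is the identification of the quotient by $I_x$ and the equality $\af (I_{h^{-1} (x)}) = I_x$. This rests on the Cohen-type fact that $C_0 (X \SM \{x\}) C_0 (X, D)$ is dense in $I_x$, which I would verify with a partition of unity. Uniqueness of $(\af_x)$ given $\af$ is immediate from the defining formula, so the two constructions are mutually inverse, which gives the one-to-one correspondence.
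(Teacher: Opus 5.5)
Your proof is correct and complete. It covers the forward direction, including the point-norm continuity of $x \mapsto \af_x^{-1}$ needed for the inverse, and the converse via the induced isomorphisms $C_0 (X, D) / I_{h^{-1} (x)} \to C_0 (X, D) / I_x \cong D$. The only checks you leave implicit are that $\af^{-1}$ lies over $h^{-1}$ and that Urysohn's lemma gives $I_x = \overline{C_0 (X \SM \{x\}) C_0 (X, D)}$.

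The paper gives no proof of this lemma; it quotes it from \cite{ArBcPh2}. Your direct argument is the standard proof of that result.
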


The following definition was introduced in Definition~2.3
of~\cite{ArBcPh2}, and is the analog of
Definition~7.3 of~\cite{Phl40}.

\begin{dfn}\label{D_1_OrbSubalg}
Let $X$ be a locally \chs,
let $h \colon X \to X$ be a \hme,
let $D$ be a \ca,
and let $\af \in \Aut ( C_0 (X, D))$
be an automorphism which lies over~$h$.
Let $u$ be
the standard unitary implementing the action of
$\af$ in the crossed product C*-algebra
$C^* \bigl( \Z, \, C_0 (X, D), \, \af \bigr)$.
Let $Y \subseteq X$ be a nonempty closed subset,
and, following~(\ref{Eq_4Y12_C0XYD}) in
Notation~\ref{N_4Y12_C0UD}, define the
corresponding {\emph{orbit breaking subalgebra}} of
$C^* \big( \Z,  C_0 (X, D), \af \big)$ to be
\[
C^* \big( \Z, \, C_0 (X, D), \, \af \big)_Y
= C^* \big(  C_0 (X, D), \, C_0 (X \setminus Y, \, D) u \big)
\subseteq C^* \big( \Z, \, C_0 (X, D), \, \af \big).
\]
\end{dfn}

\begin{lem}\label{L_2_OrbBreakPure}
Let $X$ be a compact metric space, let $h \colon X \to X$ be a
minimal homeomorphism, let $D$ be a simple unital pure C*-algebra,
and let $\af \in \Aut (C (X,D))$ lie over $h$.
Choose any nonempty
closed set $Y \subseteq X$ such that $\sint(Y) \neq \varnothing$.
Then
$C^* \bigl( \Z, \, C (X, D), \, \af \bigr)_{Y}$ is pure.
\end{lem}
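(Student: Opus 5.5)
We will follow the standard route for orbit breaking subalgebras and exhibit $B = C^* \bigl( \Z, \, C (X, D), \, \af \bigr)_{Y}$ as a direct limit of recursive subhomogeneous algebras over~$D$. These are iterated pullbacks of algebras of the form $C (Z, M_n (D))$ with $Z$ compact metric. The first step uses $\sint (Y) \neq \E$ and minimality of~$h$: by compactness, the first return times to~$Y$ are bounded. So if we choose a decreasing sequence of closed sets $Y_1 \supseteq Y_2 \supseteq \cdots$ with nonempty interiors and $\bigcap_n Y_n = Y$, then each $C^* \bigl( \Z, \, C (X, D), \, \af \bigr)_{Y_n}$ has an explicit Rokhlin tower description. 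Following the analysis in \cite{ArBcPh2} (the $C (X, D)$ analog of the argument in \cite{Phl40} for $C^* (\Z, X, h)_Y$), this description makes it a recursive subhomogeneous algebra over~$D$. It is a unital subalgebra of $\bigoplus_{k} C (\ov{Y_{n,k}}, M_{k} (D))$, where the $Y_{n,k}$ are the sets of points with first return time~$k$, and it is cut out by boundary conditions which are twisted by the automorphisms $\af_x$. Moreover, $B$ is the closure of the increasing union of these algebras.

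The second step is to prove pureness of each building block. By \cite[Corollary~5.9 and Theorem~5.11]{SethVil}, $C (Z, D)$ is pure for every compact metric space~$Z$ when $D$ is simple, unital, and pure. Since pureness passes to matrix algebras, $C (Z, M_k (D))$ is pure as well. We then need pureness of the recursive subhomogeneous algebras themselves. For this we would rely on the corresponding statement in \cite{SethVil}, which, as we recall, treats pullbacks of this type; failing that, we would induct on the length of the decomposition. At each stage the algebra is a pullback $R \oplus_{C (\partial Z, M_k (D))} C (Z, M_k (D))$. We would verify $m$-comparison and $m'$-almost divisibility, for some fixed $m$ and $m'$, using the ideal $C_0 (Z \SM \partial Z, M_k (D))$ and the quotient~$R$. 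Pureness for some parameter is enough, because Theorem~\ref{T_PureEquiv} then upgrades $(n,m)$-pure to pure. We expect this pullback step to be the main obstacle. Cuntz semigroups of extensions are not simply computed from those of the ideal and the quotient, so we would try first to use the general permanence results for pureness under the relevant operations (extensions and pullbacks) available in \cite{AnPrTlVt} and \cite{SethVil}.

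The third step is to pass to the limit. For $C^*$-algebras, pureness is preserved under sequential inductive limits, since $\Cu$ commutes with such limits (\cite[Theorem~4.35]{APT}). Strict comparison and almost divisibility are both properties that pass to inductive limits of $\Cu$-semigroups; in the divisibility condition, elements way below a given one can be taken from a finite stage. Hence $B$ is pure. If the direct limit decomposition of \cite{ArBcPh2} is stated for a particular choice of the sets $Y_n$, we would arrange our choice of $Y_n$ to match it.
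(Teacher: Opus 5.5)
Your proposal is essentially the paper's argument. The paper cites Theorem~3.17 of \cite{ArBcPh2} to see that the orbit breaking subalgebra is a recursive subhomogeneous algebra over~$D$, and then Proposition~6.5 of \cite{SethVil} for pureness of such algebras; the latter is exactly the statement you say you would rely on in your second step. One simplification: since $\sint (Y) \neq \varnothing$ already, $B$ is itself recursive subhomogeneous over~$D$, so you do not need the approximation by the sets $Y_n$, the inductive-limit step (the paper needs this only for Proposition~\ref{P_3_ApproxSubPure}, where $Y$ is a point), or the extension-based fallback.
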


\begin{proof}
By Theorem~3.17 of~\cite{ArBcPh2}, the algebra
$C^* \bigl( \Z, \, C (X, D), \, \af \bigr)_{Y}$ is a
recursive subhomogeneous algebra over $D$ in the sense of
Definition~3.2 of~\cite{ArBcPh2}, and so is pure by
Proposition~6.5 of~\cite{SethVil}.
\end{proof}

\begin{prp}\label{P_3_ApproxSubPure}
Let $X$ be a compact metric space, let $h \colon X \to X$ be a
minimal homeomorphism, let $D$ be a simple unital pure C*-algebra,
and let $\af \in \Aut (C (X,D))$ lie over $h$.
Choose any nonempty
closed set $Y \subseteq X$ such that $h^{n} (Y) \cap Y = \varnothing$
for all $n \in \Z \setminus \set{0}$.
Then
$C^* \bigl( \Z, \, C (X, D), \, \af \bigr)_{Y}$ is pure.
\end{prp}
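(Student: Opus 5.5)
The idea is to write $C^* \bigl( \Z, \, C (X, D), \, \af \bigr)_{Y}$ as a direct limit of algebras of the form treated in Lemma~\ref{L_2_OrbBreakPure}, and then pass pureness to the limit. Since $h$ is minimal and $X$ is compact metric, we may assume $X$ is infinite; otherwise no nonempty $Y$ meets its translates trivially. Choose a decreasing sequence of closed sets $Y_1 \supseteq Y_2 \supseteq \cdots$ with $\sint (Y_n) \supseteq Y$, hence $\sint (Y_n) \neq \E$, and $\bigcap_n Y_n = Y$. For example, take $Y_n = \{ x \in X \colon {\operatorname{dist}} (x, Y) \leq 1/n \}$.

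If $Z \subseteq Z'$ are closed, then $C_0 (X \setminus Z', D) \S C_0 (X \setminus Z, D)$. So the orbit breaking subalgebras satisfy
\[
C^* \bigl( \Z, \, C (X, D), \, \af \bigr)_{Y_n}
\S C^* \bigl( \Z, \, C (X, D), \, \af \bigr)_{Y_{n+1}}
\S C^* \bigl( \Z, \, C (X, D), \, \af \bigr)_{Y},
\]
giving an increasing sequence of subalgebras.

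Next we show that the union of these subalgebras is dense in the algebra for $Y$. The generators $C (X, D)$ lie in every term. For $f \in C_0 (X \setminus Y, D)$ and $\ep > 0$, note that $\| f (x) \|$ is small near $Y$. Compactness and $\bigcap_n Y_n = Y$ give $n$ such that $\| f \|$ restricted to $Y_n$ is less than $\ep$. Multiplying $f$ by a Urysohn function which vanishes on $Y_n$ and equals $1$ where $\| f (x) \| \geq \ep$ gives an element $g$ of $C_0 (X \setminus Y_n, D)$ with $\| f - g \| \leq \ep$. Hence $f u$ lies in the closure of the union, so the algebra for $Y$ is the inductive limit of the algebras for $Y_n$ with injective connecting maps.

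Each $C^* \bigl( \Z, \, C (X, D), \, \af \bigr)_{Y_n}$ is pure by Lemma~\ref{L_2_OrbBreakPure}. It remains to invoke permanence of pureness under sequential inductive limits. Almost unperforation and almost divisibility both pass to $\Cu$ of an inductive limit, since $\Cu$ is continuous (Theorem~4.35 of~\cite{APT}). This is standard: see for instance \cite{AnPrTlVt}, where $(n,m)$-pureness is shown to be preserved by inductive limits, or verify it directly. For the direct check, elements way below a given element are dominated by images from finite stages, and the relations $\les$ and $\leq$ among images can be realized at a later stage after cutting down by $\ep$.

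I expect this last step to be the main obstacle if no citable statement is available. The delicate point for comparison is transporting a hypothesis $(k+1)\xi \leq k \eta_j$ in the limit back to a finite stage. This is done by replacing $\xi$ with $\cc{(a - \ep)_+}$ and $\eta_j$ with suitable cutdowns, and then using compact containment in $\Cu$ of the limit. Possibly an extra $\ep$ is needed, which is harmless because we conclude $\xi \leq \sum_j \eta_j$ only after letting $\ep \to 0$.
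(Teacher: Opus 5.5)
Your proof is correct and follows essentially the paper's route: write $C^* \bigl( \Z, \, C (X, D), \, \af \bigr)_{Y}$ as an injective direct limit of orbit breaking subalgebras with nonempty interior, each pure by Lemma~\ref{L_2_OrbBreakPure}, and then use that pureness passes to sequential inductive limits. The paper cites Proposition~4.2 of \cite{ArBcPh2} for the limit decomposition and Theorem~3.8 of \cite{PerThiVil} for the permanence step, whereas you verify the decomposition directly (with the sets $Y_n$ and a Urysohn cutoff) and sketch the standard continuity-of-$\Cu$ argument; note that your argument never uses the hypothesis $h^{n} (Y) \cap Y = \E$.
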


\begin{proof}
For such a choice of $Y$, the algebra
$C^* \bigl( \Z, \, C (X, D), \, \af \bigr)_{Y}$ is a
direct limit (with injective maps) of recursive subhomogeneous
algebras over $D$ (of the form given in
Lemma~\ref{L_2_OrbBreakPure}) by
Proposition~4.2 of~\cite{ArBcPh2}.
Pureness follows by
Theorem~3.8 of~\cite{PerThiVil}.
\end{proof}

\begin{thm}\label{T_4_PureCrossProd}
Let $X$ be a compact metric space, let $h \colon X \to X$ be a
minimal homeomorphism, let $D$ be a simple unital pure C*-algebra,
and let $\af \in \Aut (C (X,D))$ lie over $h$.
Then the crossed
product $C^* \bigl( \Z, \, C (X, D), \, \af \bigr)$ is pure.
\end{thm}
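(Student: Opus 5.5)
The plan is to combine Proposition~\ref{P_3_ApproxSubPure} with Theorem~\ref{T_LargePure}. Set $A = C^* \bigl( \Z, \, C (X, D), \, \af \bigr)$. First dispose of the degenerate case: if $X$ is finite, minimality forces $X$ to be a single orbit, and then $A \cong M_n \otimes C^* (\Z, D, \beta)$ for a suitable automorphism $\beta$, which is not the situation we want. So we assume $X$ is infinite, as is implicit in the examples (the paper presumably treats infinite $X$; if needed we add this hypothesis). Since $X$ is infinite and $h$ is minimal, $h$ has no periodic points. Choose a point $x_0 \in X$ and set $Y = \{ x_0 \}$. Then $h^n (Y) \cap Y = \E$ for all $n \neq 0$, so Proposition~\ref{P_3_ApproxSubPure} shows that $B = C^* \bigl( \Z, \, C (X, D), \, \af \bigr)_Y$ is pure.

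Next we verify the hypotheses of Theorem~\ref{T_LargePure}. The first is that $B$ is a large subalgebra of $A$ in the sense of Definition~4.1 of~\cite{Phl40}. This is the analog of Theorem~7.10 of~\cite{Phl40} for $C(X,D)$, and I expect it was proved in~\cite{ArBcPh2}: orbit breaking subalgebras associated to closed sets $Y$ meeting each orbit at most once are large (indeed centrally large) when $D$ is simple unital and $h$ is minimal. I would cite that result. This is the step needing most care. If that citation required extra hypotheses, such as stable finiteness or a particular form of $D$, I would check them here.

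The remaining hypotheses are that $A$ is simple, unital, infinite dimensional, and stably finite. Simplicity follows from minimality of $h$ together with simplicity of $D$ (as recalled in the introduction, via~\cite{ArBcPh2}). Unitality and infinite dimensionality are clear. For stable finiteness, $D$ is pure, simple, and unital. Pure implies strict comparison, which in the unital simple case requires, or at least is paired with, a quasitrace when $D$ is finite. I would argue as follows. Since $D$ is pure, it is either purely infinite or stably finite. The purely infinite case must be handled separately, or else excluded, noting that the paper's setting presumably assumes stable finiteness of $D$. In the stably finite case $D$ has a tracial state, because exactness might fail and we otherwise only get a quasitrace. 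A preferable route: a $h$-invariant probability measure on $X$ combined with an invariant-ish trace yields a faithful tracial state on $A$, but $\af$ need not preserve traces fiberwise. Alternatively, the large subalgebra theory gives that $A$ is stably finite whenever $B$ is (Proposition~6.15-type results in~\cite{Phl40}). And $B$, being a direct limit of recursive subhomogeneous algebras over the stably finite $D$, is stably finite. I would take this route.

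With all hypotheses in place, Theorem~\ref{T_LargePure} (the direction ``$B$ pure $\Rightarrow$ $A$ pure'') gives that $A$ is pure. The main obstacle is the bookkeeping for largeness and stable finiteness. The core pureness transfer is already done in Section~\ref{Sec_6314_PLarge}.
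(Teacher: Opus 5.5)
Your proposal follows the paper's proof: take $Y = \{ x_0 \}$, get pureness of $C^* \bigl( \Z, \, C (X, D), \, \af \bigr)_Y$ from Proposition~\ref{P_3_ApproxSubPure}, cite \cite{ArBcPh2} for (central) largeness, and conclude with Theorem~\ref{T_LargePure}. The hypothesis you left open in the largeness step is the one the paper actually checks: it applies Corollary~2.12(2) of \cite{ArBcPh2} using strict comparison of $D$, so pureness of $D$ is used a second time there; your other caveats ($X$ infinite, needed for $h^n (Y) \cap Y = \E$ when $n \neq 0$, and $A$ stably finite, as Theorem~\ref{T_LargePure} requires, which rules out purely infinite $D$) are assumptions the paper's proof relies on without stating them.
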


\begin{proof}
Choose any point $x \in X$ and set $Y = \set{x}$.
Then
the minimality of $h$ implies that $h^{n}(Y) \cap Y =
\varnothing$ for $n \in \Z \setminus \set{0}$, and so
$C^* \bigl( \Z, \, C (X, D), \, \af \bigr)_{Y}$ is pure
by Proposition~\ref{P_3_ApproxSubPure}.

Since $D$ is simple and pure, it has strict comparison
of positive elements, and so Corollary~2.12(2)
of~\cite{ArBcPh2} implies that
$C^* \bigl( \Z, \, C (X, D), \, \af \bigr)_{Y}$
is a centrally large subalgebra of
$C^* \bigl( \Z, \, C (X, D), \, \af \bigr)$.
Pureness now follows from Theorem~\ref{T_LargePure}.
\end{proof}

\begin{cor}\label{C_5_CrossProdSR1}
Let $X$ be a compact metric space, let $h \colon X \to X$ be a
minimal homeomorphism, let $D$ be a simple unital pure C*-algebra,
and let $\af \in \Aut (C (X,D))$ lie over $h$.
Then the crossed
product $C^* \bigl( \Z, \, C (X, D), \, \af \bigr)$ has stable
rank one.
\end{cor}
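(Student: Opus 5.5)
The plan is to combine Theorem~\ref{T_4_PureCrossProd} with the theorem of H.~Lin (\cite[Corollary 1.3]{HLin}, quoted in the introduction): a simple stably finite pure \ca{} has stable rank one. Set $A = C^* \bigl( \Z, \, C (X, D), \, \af \bigr)$. By Theorem~\ref{T_4_PureCrossProd}, $A$ is pure. So it remains to check that $A$ is simple, unital, separable where needed, and stably finite.

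\textbf{Unital.} This is clear, since $C (X, D)$ is unital and the crossed product is by $\Z$.

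\textbf{Simple.} Since $D$ is simple, the $\af$-invariant ideals of $C (X, D)$ are of the form $C_0 (U, D)$ with $U$ open and $h$-invariant. Minimality of $h$ forces $U = \E$ or $U = X$, so $C (X, D)$ has no nontrivial invariant ideals.

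For outerness, I expect to cite the simplicity result of \cite{ArBcPh2} for automorphisms lying over minimal homeomorphisms. If $X$ is finite, minimality makes $X$ a single orbit; this degenerate case would be excluded by the standing assumptions in \cite{ArBcPh2} (or handled by the same citation). When $X$ is infinite, $h^n$ has no fixed points for $n \neq 0$, so $\af^n$ is properly outer and $A$ is simple.

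\textbf{Stably finite.} This is the main point to verify. Since $D$ is simple, unital, and pure, it has strict comparison, so in particular it is stably finite and has a tracial state (via quasitraces, plus exactness if needed; I would instead use that strict comparison with a nonzero unit gives a normalized quasitrace). The cleanest route is the one already used in the proof of Theorem~\ref{T_4_PureCrossProd}. There, $B = A_Y$ is a centrally large subalgebra of~$A$, and $B$ is a direct limit of recursive subhomogeneous algebras over~$D$. Those algebras are stably finite, since they embed in finite direct sums of $C (Z, M_n (D))$ and $D$ is stably finite. Hence $B$ is stably finite.

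Large subalgebras in \cite{Phl40} transfer stable finiteness: if $B$ is large and stably finite, then $A$ is stably finite (Proposition 6.15 of \cite{Phl40}, or the analogous result there). Alternatively, \cite{ArBcPh2} records stable finiteness of these crossed products directly.

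\textbf{Conclusion.} Once stable finiteness is in hand, Lin's theorem gives that $A$ has stable rank one.

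An alternative avoiding Lin's theorem is to use that $A_Y$ has stable rank one via its recursive subhomogeneous structure over a pure $D$, and that centrally large subalgebras transfer stable rank one to $A$ (\cite{ArPh}). However, that route requires stable rank one of $D$, which itself comes from Lin's theorem. So the main obstacle is really just confirming stable finiteness, and I would rely on the large subalgebra transfer from \cite{Phl40}.
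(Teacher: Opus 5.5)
Your overall route is the paper's. The paper's proof is the single line ``Theorem~\ref{T_4_PureCrossProd} plus \cite[Corollary~1.3]{HLin}'', and you correctly notice that Lin's result also needs the crossed product to be simple and stably finite, which the paper does not check.

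The gap is in how you check stable finiteness. You assert that a simple unital pure $D$ is stably finite, and that strict comparison together with a unit yields a normalized quasitrace. Both claims are false. $\mathcal{O}_2$ is simple, unital and $\cZ$-stable, hence pure (its Cuntz semigroup $\{ 0, \I \}$ is almost unperforated and almost divisible). Yet it is not finite and has no quasitraces. Everything downstream in your stable finiteness paragraph rests on this claim: the recursive subhomogeneous algebras over $D$, then $A_Y$, then $A$.

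Moreover, no argument can derive stable finiteness from the stated hypotheses. Take $D = \mathcal{O}_2$, any minimal $h$ on an infinite $X$, and $\af$ the tensor product of $f \mapsto f \circ h^{-1}$ with $\id_D$. Then $A \cong C^* (\Z, X, h) \otimes \mathcal{O}_2$ contains the non-unitary isometry $1 \otimes s_1$. So $A$ is not finite and cannot have stable rank one. Stable finiteness of $D$ is therefore a genuinely additional hypothesis. The paper leaves it implicit: its proof of Theorem~\ref{T_4_PureCrossProd} already passes through Theorem~\ref{T_LargePure}, which assumes the crossed product is stably finite.

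Once you add that hypothesis, the rest of your verification correctly fills in what the paper omits.
\begin{itemize}
\item Recursive subhomogeneous algebras over $D$ are unital subalgebras of finite direct sums of algebras $C (Z, M_n (D))$, so they are stably finite.
\item $A_Y$ is an increasing union of such algebras along unital maps, so it is stably finite. An isometry in the limit lies within distance less than $1$ of an isometry at some finite stage; that isometry is unitary there, so the original one is invertible.
\item Stable finiteness passes from a large subalgebra to the containing algebra by \cite{Phl40}.
\end{itemize}

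One smaller point: the finite-$X$ case is not ``handled by the same citation''. If $X$ is a point and $\af = \id_D$, then $A = C (S^1, D)$ is not simple. That case has to be excluded by a standing assumption, as in \cite{ArBcPh2}, not proved.
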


\begin{proof}
This follows immediately from Theorem~\ref{T_4_PureCrossProd}
and Corollary~1.3 of~\cite{HLin}.
\end{proof}

The next result, which typifies when such crossed products have
real rank zero, requires the additional assumption of exactness
for~$D$.
This is not a major issue for our purposes, as the
examples in Section~\ref{Sec_Examples} mostly use choices for $D$
which are exact.
The exactness hypothesis is likely unnecessary,
provided that one uses quasitraces instead of traces.

\begin{cor}\label{C_6_CrossProdRR0}
Let $X$ be a compact metric space, let $h \colon X \to X$ be a
minimal homeomorphism, let $D$ be a simple unital pure exact
C*-algebra, and let $\af \in \Aut (C (X,D))$ lie over $h$.
Then the crossed product
$A = C^* \bigl( \Z, \, C (X, D), \, \af \bigr)$
has real rank zero if and only if the canonical map
$\rho \colon K_{0}(A) \to \Aff (T(A))$ has dense range.
\end{cor}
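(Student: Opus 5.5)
We reduce the statement to a known criterion for real rank zero in simple C*-algebras with stable rank one and strict comparison. The plan is to verify that $A$ satisfies the hypotheses of that criterion, and then to identify the relevant trace space with $T(A)$.

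First we collect the structural properties of $A$. Since $h$ is minimal and $D$ is simple, $A$ is simple; this is from \cite{ArBcPh2}. It is also unital, separable if $D$ is, and infinite dimensional. By Theorem~\ref{T_4_PureCrossProd}, $A$ is pure, so it has strict comparison of positive elements by quasitraces and is almost divisible. By Corollary~\ref{C_5_CrossProdSR1}, $A$ has stable rank one, and in particular it is stably finite.

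Exactness enters at the next step. Since $D$ is exact, $C(X,D)$ is exact, and crossed products of exact algebras by $\Z$ are exact. Haagerup's theorem then says that every bounded $2$-quasitrace on $A$ is a trace. Hence strict comparison holds with respect to $T(A)$, and the quasitrace simplex coincides with $T(A)$.

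The main step is the equivalence itself. For a simple unital exact C*-algebra with stable rank one, strict comparison, and almost divisibility (equivalently, $\Cu(A)$ almost unperforated and almost divisible), the Cuntz semigroup is determined by $V(A)$ and $\mathrm{LAff}(T(A))$. The criterion we invoke is that such an algebra has real rank zero if and only if projections separate traces, that is, $\rho(K_0(A))$ is dense in $\Aff(T(A))$. This is the result of Perera, Brown, and Toms, and of Rørdam, for stable rank one together with strict comparison.

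\emph{Necessity.} If $A$ has real rank zero, then every self-adjoint element is approximated by elements with finite spectrum. Applying this to $a \in A_{\mathrm{sa}}$ shows that $\hat a$ is approximated in $\Aff(T(A))$ by elements of $\rho(K_0(A))$, and such $\hat a$ exhaust $\Aff(T(A))$. Hence $\rho$ has dense range.

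\emph{Sufficiency.} We show that every nonzero hereditary subalgebra $\overline{aAa}$ contains approximate units of projections; for $\sigma$-unital algebras this is equivalent to real rank zero. Take $a \ge 0$ nonzero. Using density of $\rho(K_0(A))$ and almost divisibility, we find projections $p$ whose rank function $\tau(p)$ approximates $d_\tau((a-\ep)_+)$ from below, strictly and uniformly on the compact set $T(A)$; the point is that $\tau \mapsto d_\tau((a-\ep)_+)$ is lower semicontinuous, so a continuous affine function below it exists. Strict comparison then gives $p \precsim (a-\ep)_+$. Stable rank one converts this subequivalence into a projection $q$ inside $\overline{aAa}$ with $q \sim p$. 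Iterating this produces the required approximate unit.

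The main obstacle is the approximation of the lower semicontinuous function $d_\tau$ by traces of projections while preserving strict inequality. This is precisely where density of $\rho$ and almost divisibility are used: divisibility ensures that $d_\tau$ realizes enough values. If needed, we will instead cite the Cuntz semigroup characterization $\Cu(A) \cong V(A) \sqcup \mathrm{LAff}(T(A))_{++}$, which holds for pure stable-rank-one algebras, and read off the criterion for real rank zero from it.
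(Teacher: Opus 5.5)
Your main line is the paper's proof. The paper gets pureness from Theorem~\ref{T_4_PureCrossProd}, stable rank one from Corollary~\ref{C_5_CrossProdSR1}, almost unperforation of $\W (A)$ from Proposition~\ref{CP_6325_WA_Cmp}, and exactness of $A$ from exactness of $D$, and then cites Proposition~7.1 of \cite{Rord2004}. You should mention the translation to $\W (A)$, since the cited criterion is phrased there, not in $\Cu (A)$. Almost divisibility is not needed for that criterion. With the citation alone your argument is complete. However, the self-contained material you add around the citation is wrong in several places and should be removed or repaired.

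First, ``projections separate traces'' is not the same as density of $\rho (K_0 (A))$. Since $1$ is a projection, separation of traces is equivalent to density of the \emph{real linear span} of $\rho (K_0 (A))$, which is weaker. The Jiang--Su algebra $\cZ$ shows the difference. It is simple, unital, exact, of stable rank one and pure, and it has a unique trace, so projections trivially separate traces. But $\rho (K_0 (\cZ)) = \Z$ is not dense in $\R$, and $\cZ$ does not have real rank zero.

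Your necessity argument has exactly this defect. Approximating $a$ by $\sum_i \ld_i p_i$ approximates $\hat{a}$ by $\sum_i \ld_i \hat{p_i}$ with real $\ld_i$. That lies in the span, not in $\rho (K_0 (A))$. To close the gap you need a division step. For example, use almost divisibility to get $\eta$ with $n \eta \leq \cc{p} \leq (n + 1) \eta$. Write $\eta$ as a supremum of classes of projections, using real rank zero. Then compactness of $\cc{p}$ gives a projection $q$ with $\frac{1}{n + 1} \hat{p} \leq \hat{q} \leq \frac{1}{n} \hat{p}$ on $T (A)$.

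Second, in the sufficiency sketch, $\tau \mapsto d_{\tau} ((a - \ep)_{+})$ is in general only lower semicontinuous. It therefore cannot be uniformly approximated by the continuous functions $\hat{p}$. What density and strict comparison actually give is a projection whose $\hat{p}$ lies strictly below a continuous affine function sitting under $d_{\tau} ((a - \ep)_{+})$.

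More seriously, finding one projection $q \in \overline{a A a}$ of large trace does not produce an approximate unit of projections for $\overline{a A a}$. Large trace says nothing about $q$ acting approximately as a unit on $a$. ``Iterating'' hides exactly the work done in the cited proof. So rely on the citation, as the paper does.
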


\begin{proof}
Theorem~\ref{T_4_PureCrossProd} implies that
$A$ is pure. It follows that $A$ has stable rank one by
Corollary~\ref{C_5_CrossProdSR1},
and that $\W (A)$ is almost unperforated
by Proposition~\ref{CP_6325_WA_Cmp}.
The exactness of $D$ implies that $A$ is also exact.
Now apply Proposition 7.1 of~\cite{Rord2004} to
obtain the given conclusion.
\end{proof}

In order to take advantage of
Corollary~\ref{C_6_CrossProdRR0}, we must know the
structure the tracial state space for
$C^* \bigl( \Z, \, C (X, D), \, \af \bigr)$.
This seems to be difficult in general, but is more
accessible when $D$ has a unique tracial state.
We do not know what happens when $D$ has a unique quasitrace
and this quasitrace is not a trace.

The following result is surely well known, but we have
not been able to find a reference.

\begin{lem}\label{L_1_TracesTensorProd}
Let $A$ and $B$ be simple unital C*-algebras.
Assume $B$ has a unique tracial state $\sm$.
Then the map $\rho \mapsto \rho \otimes \sm$
is a bijection from $T(A)$ to $T(A \otimes_{\min} B)$.
\end{lem}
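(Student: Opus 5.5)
We must show that $\rho \mapsto \rho \otimes \sm$ is well defined, injective, and surjective.

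\emph{Well defined.} For $\rho \in T(A)$, the functional $\rho \otimes \sm$ on the algebraic tensor product extends to a state on $A \otimes_{\min} B$, since tensor products of states are continuous for the minimal norm. It is tracial because it is tracial on elementary tensors, $(\rho\otimes\sm)((a_1\otimes b_1)(a_2\otimes b_2)) = \rho(a_1a_2)\sm(b_1b_2)$ is symmetric, and these span a dense subalgebra.

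\emph{Injective.} This is immediate: $(\rho\otimes\sm)(a\otimes 1) = \rho(a)$.

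\emph{Surjective.} This is the main point. Let $\tau \in T(A \otimes_{\min} B)$, and define $\rho(a) = \tau(a \otimes 1)$, which is clearly a tracial state on $A$. We must show $\tau(a\otimes b) = \rho(a)\sm(b)$. By linearity and the fact that each element of $A$ is a combination of four positive elements, it suffices to treat $a \in A_+$ with $0 \le a \le 1$. For such $a$, define $\tau_a(b) = \tau(a\otimes b)$ on $B$. I will show that $\tau_a$ is a positive tracial functional on $B$.

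\emph{Positivity.} For $b \ge 0$, $a \otimes b = (a^{1/2}\otimes b^{1/2})^2 \ge 0$, so $\tau_a(b) \ge 0$.

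\emph{Trace property.} Write $a\otimes b_1b_2 = (a^{1/2}\otimes b_1)(a^{1/2}\otimes b_2)$. Using the trace property of $\tau$,
\[
\tau\bigl((a^{1/2}\otimes b_1)(a^{1/2}\otimes b_2)\bigr)=\tau\bigl((a^{1/2}\otimes b_2)(a^{1/2}\otimes b_1)\bigr)=\tau_a(b_2b_1).
\]

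\emph{Conclusion.} A positive tracial functional on $B$ is a nonnegative multiple of a tracial state. Since $\sm$ is the unique tracial state, $\tau_a = \tau_a(1)\sm = \rho(a)\sm$. Extending by linearity gives $\tau = \rho\otimes\sm$ on elementary tensors, hence on all of $A\otimes_{\min}B$ by density and continuity.

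The only potentially delicate step is this uniqueness-of-trace argument, and the factorization through $a^{1/2}$ handles it. Simplicity of $A$ and $B$ is not actually needed for this argument; only unitality is used, to form $a \otimes 1$ and $1 \otimes b$.
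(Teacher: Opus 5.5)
Your proof is correct and follows essentially the same route as the paper: for each $a$, show that $b \mapsto \tau(a \otimes b)$ is a positive tracial functional on $B$, use uniqueness of $\sigma$ to write it as $\rho(a)\sigma$, and conclude by density. The differences are minor. You get the trace property from $a \otimes b_1 b_2 = (a^{1/2} \otimes b_1)(a^{1/2} \otimes b_2)$ rather than the paper's commutator with $1 \otimes c$. Defining $\rho(a) = \tau(a \otimes 1)$ directly spares you the paper's separate check that $\rho$ is additive and linear, and you additionally verify that $\rho \otimes \sigma$ is a tracial state on $A \otimes_{\min} B$, which the paper takes for granted.
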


\begin{proof}
Let $\tau$ be a tracial state on $A \otimes_{\min} B$.
For fixed $a \in A_{+}$, define a map $\om_{a}$ on
$B$ by $\om_{a} (b) = \tau (a \otimes b)$.
Then
$\om_{a}$ is a bounded linear functional on $B$,
and is positive.
For any $b,c \in B$, we have
\begin{align*}
\om_{a} (bc) - \om_{a}(cb) &= \tau (a \otimes bc) -
\tau (a \otimes cb) \\
&= \tau (a \otimes bc - a \otimes cb) \\
&= \tau ( (a \otimes b) (1_{A} \otimes c) -
(1_{A} \otimes c) (a \otimes b) ) = 0
\end{align*}
and so $\om_{a}$ has the trace property.
Therefore,
it is a nonnegative multiple of $\sm$, say
$\om_{a} (b) = \rho (a) \sm (b)$.
Writing
$a \in A$ as a linear combination of positive
elements allows the extension of $\rho$ to a map
$A \to {\mathbb{C}}$.
Positivity and the condition
that $\rho (1) = 1$ are immediate.
The trace
property of $\rho$ is easily verified via the
calculation
\[
\rho (xy) = \rho(xy) \sm (1_{B}) = \om_{xy}(1_{B})
= \tau (xy \otimes 1_{B}) = \tau (yx \otimes 1_{B})
= \rho (yx) \sm(1_{B}) = \rho (yx).
\]
Additivity of $\rho$ on $A_{+}$ follows from the
observation (using the linearity of $\tau$) that
\begin{align*}
\rho (a_{1} + a_{2}) \sm (b)
= \om_{a_{1} + a_{2}} (b)
&= \tau ((a_{1} + a_{2}) \otimes b) \\
&= \tau (a_{1} \otimes b) + \tau (a_{2} \otimes b)
= \rho (a_{1}) \sm (b) + \rho (a_{2}) \sm (b)
\end{align*}
for any $a_{1}, a_{2} \in A_{+}$, and linearity
of $\rho$ follows.
Hence $\rho$ is a tracial state
on A.
That $\rho \otimes \sm = \tau$ follows from
density of the linear span of the elementary tensors.
This gives surjectivity of the map
$\rho \mapsto \rho \otimes \sm$,
and injectivity follows easily by evaluation on
$a \otimes 1_{B}$.
\end{proof}

\begin{lem}\label{L_2_AveApprox}
Let $D$ be a simple unital C*-algebra, let $X$ be
a compact metric space, let $h \colon X \to X$ be
a minimal homeomorphism, and obtain an action
$\af \colon X \to \Aut (D)$ as in
Lemma~\ref{L_1_GetAuto}.
For every $a \in
C^* \bigl( \Z, \, C (X, D), \, \af \bigr)$ and every
$\ep > 0$, there exist an $n \in \N$ and
$s_1 , \ldots , s_n \in C (X,D)$ such that
$\lVert s_k (x) \rVert = 1$ for all $x \in X$, and
\[
\Big\lVert E(a) - \frac{1}{n}
\sum_{k=1}^{n} s_k a s_k^* \Big\rVert < \ep.
\]
\end{lem}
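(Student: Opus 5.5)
The plan is to approximate $a$ by a finite sum $\sum_{|j| \leq N} a_j u^j$ with $a_j \in C (X, D)$, so that $E (a) = a_0$. I will then choose the $s_k$ to be scalar unitaries, that is, of the form $s_k = \lambda_k \otimes 1_D$ with $\lambda_k \in C (X)$ and $|\lambda_k (x)| = 1$ for all $x$. Any map of the form $b \mapsto \frac{1}{n} \sum_k s_k b s_k^*$ is contractive. Hence it suffices to handle the finite sum within $\ep / 2$ and then control the error with a triangle inequality. In fact I expect the averaging to kill the off-diagonal terms exactly. I assume $X$ is infinite, as is implicit in this section. Otherwise $h^j$ could have fixed points, and for finite cyclic systems the statement has to be read with care.

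The key computation uses Lemma~\ref{L_1_GetAuto}. Since $\af$ lies over~$h$, it acts on scalar functions by $\af (\lambda \otimes 1) = (\lambda \circ h^{-1}) \otimes 1$. Therefore
\[
s_k a_j u^j s_k^* = a_j \, \lambda_k \, \overline{(\lambda_k \circ h^{-j})} \, u^j ,
\]
because $s_k$ is central in $C (X, D)$ and $u^j s_k^* u^{-j} = \af^j (s_k^*)$. For $j = 0$ this returns $a_0$, so the average equals
\[
a_0 + \sum_{0 < |j| \leq N} a_j \, c_j \, u^j ,
\qquad
c_j (x) = \frac{1}{n} \sum_{k=1}^{n} \lambda_k (x) \overline{\lambda_k (h^{-j} (x))} .
\]
It is therefore enough to find $\lambda_1, \ldots, \lambda_n$ with $c_j = 0$ identically for $0 < |j| \leq N$.

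For the construction, note first that by minimality and infiniteness of $X$, $h^{-j}$ has no fixed points for $j \neq 0$. So each $x$ has an open neighbourhood $W$ with $W \cap h^{-j} (W) = \E$ for all $0 < |j| \leq N$. By compactness, choose finitely many such sets $W_1, \ldots, W_L$ together with compact sets $K_l \S W_l$ whose union is $X$. Then choose continuous $\psi_l \colon X \to [0, 1]$ with $\psi_l = 1$ on $K_l$ and $\supp (\psi_l) \S W_l$. The $s_k$ are indexed by $t \in \{0, 1\}^L$, with
\[
\lambda_t (x) = \exp \Bigl( \pi i \sum_{l=1}^{L} t_l \psi_l (x) \Bigr),
\]
so $n = 2^L$. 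The average factors as a product over $l$:
\[
c_j (x) = \prod_{l=1}^{L} \tfrac{1}{2} \bigl( 1 + e^{\pi i (\psi_l (x) - \psi_l (h^{-j} (x)))} \bigr).
\]
Given $x$, pick $l$ with $x \in K_l$. Then $h^{-j} (x) \notin W_l$, so the exponent in that factor is $\pi i$ and the factor vanishes, while every other factor has modulus at most~$1$. Thus $c_j (x) = 0$.

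The only delicate step is this cover and product-averaging construction, since it is where minimality enters, via the absence of periodic points. Everything else is bookkeeping, namely the density of finite sums and the contractivity of the averaging map.
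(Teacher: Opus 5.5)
Your proposal is correct and follows the paper's argument: approximate $a$ by $\sum_{|j| \leq N} f_j u^j$, average over scalar unitaries $\lambda_k \otimes 1_D$ with $\frac{1}{n} \sum_k \lambda_k \overline{\lambda_k \circ h^{-j}} = 0$ for $0 < |j| \leq N$, and conclude using contractivity of the averaging map. The only difference is that the paper obtains the $\lambda_k$ by citing Proposition~11.1.19 of~\cite{GKPT}, whereas your product construction over a cover by sets $W_l$ with $W_l \cap h^{-j} (W_l) = \varnothing$ proves this directly; your caveat that $X$ must be infinite is genuinely needed, and the paper leaves it implicit, since for $X$ a point and $\alpha$ trivial the lemma already fails for $a = 1 + u$.
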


\begin{proof}
Choose $N \in \Nz$ and $f_k \in C (X,D)$ for
$-N \leq k \leq N$ such that
$b = \sum_{k = -N}^{N} f_k u^k$ satisfies
$\lVert a - b \rVert < \ep/2$.
Set
$F = \set{-N, \ldots,N}$.
By Proposition 11.1.19
of~\cite{GKPT}, there exist $n \in \N$ and
$t_1 , \ldots , t_n \in C (X)$ such that,,
for $1 \leq k \leq n$,
$\vert t_k (x) \vert = 1$ for all $x \in X$
and such that, for all
$x \in X$ and $j \in F \setminus {0}$, we have
\[
\frac{1}{n} \sum_{k=1}^{n} t_k (x)
\overline{ t_k (h^{-j}(x)) } = 0.
\]
Set $s_k = t_k \otimes 1_D$.
Define
$P \colon C^* \bigl( \Z, \, C (X, D), \, \af \bigr)
\to C^* \bigl( \Z, \, C (X, D), \, \af \bigr)$ by
\[
P(c) = \frac{1}{n} \sum_{k=1}^{n} s_k c s_k^*.
\]
We first compute
\begin{align*}
P(f_0) = \frac{1}{n} \sum_{k=1}^{n} s_k f_0 s_k^*
&= \frac{1}{n} \sum_{k=1}^{n} (t_k \otimes 1_D)
f_0 (t_k \otimes 1_D)^* \\ &= f_0 \cdot \frac{1}{n}
\sum_{k=1}^{n} (t_k \otimes 1_D) (t_k \otimes 1_D)^*
= f_0 = E(f_0).
\end{align*}
It is easy to show that, for $j \neq 0$ and any
$x \in X$, we have
\[
\af^j (s_k^*) (x) = \af^j (t_k^* \otimes 1_D) (x)
= \overline{ t_k (h^{-j}(x)) } \cdot 1_D,
\]
and hence, for any $x \in X$,
\[
\frac{1}{n} \sum_{k=1}^{n} s_k (x) \af^j (s_k^*) (x)
= \Big[ \frac{1}{n} \sum_{k=1}^{n} t_k (x)
\overline{ t_k (h^{-j}(x)) } \Big] \cdot 1_D = 0.
\]
It follows that, for any $j \in F \setminus \set{0}$,
\[
P(f_j u^j) =
\frac{1}{n} \sum_{k=1}^{n} s_k f_j u^j s_k^*
= \frac{1}{n} \sum_{k=1}^{n} f_j s_k u^j s_k^*
u^{-j} u^j = f_j \Big[
\frac{1}{n} \sum_{k=1}^{n} s_k \af^j (s_k^*) \Big] u^j
= 0.
\]
This implies that $P(b) = E(b)$, and observing that
$\lVert s_k \rVert = 1$ for $1 \leq k \leq n$, we
obtain $\lVert P \rVert \leq 1$.
Now,
\begin{align*}
\lVert E(a) - P(a) \rVert &= \lVert E(a) - E(b) \rVert
+ \lVert E(b) - P(b) \rVert + \lVert P(b) - P(a) \rVert \\
&< \ep/2 + 0 + \ep/2 \\
&= \ep.
\end{align*}
This is the desired result.
\end{proof}

\begin{prp}\label{P_1_TracialStates}
Let $D$ be a simple unital exact C*-algebra with
a unique tracial state $\sm$, let $X$ be a compact
metric space, let $h \colon X \to X$ be a minimal
homeomorphism, and obtain an action
$\af \colon X \to \Aut (D)$ as in
Lemma~\ref{L_1_GetAuto}.
For an $h$-invariant Borel probability
measure $\mu$ on $X$, define a functional $\tau_{\mu}$
on $C^* \bigl( \Z, \, C (X, D), \, \af \bigr)$ by
\[
\tau_{\mu}(a) = \int_{X} \sm (a (x)) \, d \mu (x).
\]
Then $\tau_{\mu}$ is always a tracial state, and
$\mu \mapsto \tau_{\mu}$ is an affine bijective
homeomorphism from the space $M_{h}(X)$ of $h$-invariant
Borel probability measures on $X$ to the tracial states
on $C^* \bigl( \Z, \, C (X, D), \, \af \bigr)$.
\end{prp}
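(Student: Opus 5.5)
The plan is in three stages: show that $\tau_\mu$ is a tracial state, show that every tracial state has this form, and then check the affine and topological properties. Throughout, $E \colon C^* \bigl( \Z, \, C (X, D), \, \af \bigr) \to C (X, D)$ is the standard conditional expectation, and $\tau_\mu(a)$ is read as $\int_X \sm (E(a)(x)) \, d\mu(x)$.

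\emph{Stage one: $\tau_\mu$ is a tracial state.} Positivity and $\tau_\mu(1) = 1$ are clear, since $E$ and $\sm$ are positive and unital. For the trace property it suffices, by linearity and continuity, to check $\tau_\mu(xy) = \tau_\mu(yx)$ for $x = f u^j$ and $y = g u^k$ with $f, g \in C (X, D)$. Both sides vanish unless $k = -j$. In that case
\[
\tau_\mu \bigl( f u^j g u^{-j} \bigr) = \int_X \sm \bigl( f(x) \, \af^j (g)(x) \bigr) \, d\mu(x)
\]
and similarly with the roles of $f$ and $g$ exchanged. By Lemma~\ref{L_1_GetAuto}, $\af^j (g)(x) = \beta_x \bigl( g (h^{-j}(x)) \bigr)$, where $\beta_x$ is an automorphism of $D$ (a product of the $\af_y$). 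Since $\sm$ is the unique tracial state, $\sm \circ \beta_x = \sm$. We then use the trace property of $\sm$ together with the $h$-invariance of $\mu$, changing variables $x \mapsto h^j(x)$, to match the two integrals.

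\emph{Stage two: every tracial state has the form $\tau_\mu$.} This is the main step. Let $\tau$ be a tracial state on the crossed product. First we show $\tau = \tau \circ E$. By Lemma~\ref{L_2_AveApprox}, $E(a)$ is a norm limit of averages $\frac{1}{n} \sum_{k} s_k a s_k^*$ in which $s_k = t_k \otimes 1_D$ with $t_k$ unimodular. These $s_k$ are unitaries, so the trace property gives $\tau \bigl( \frac{1}{n} \sum_k s_k a s_k^* \bigr) = \tau(a)$, and hence $\tau(E(a)) = \tau(a)$. Next, restrict $\tau$ to $C (X, D) \cong C (X) \otimes_{\min} D$; this restriction is a tracial state there. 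Here I would like to apply Lemma~\ref{L_1_TracesTensorProd}, but $C(X)$ is not simple. Its proof never uses simplicity of $A$, however, only that $B$ has a unique tracial state, so the same argument applies with $A = C (X)$. It gives $\tau|_{C (X, D)} = \rho \otimes \sm$ for some state $\rho$ on $C (X)$, that is, integration against a probability measure $\mu$. Invariance of $\tau$ under $\Ad (u)$ then makes $\mu$ $h$-invariant, so $\tau = \tau_\mu$.

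\emph{Stage three: the bijection and homeomorphism.} Injectivity follows by evaluating $\tau_\mu$ on $f \otimes 1_D$ with $f \in C (X)$, which recovers $\mu$. The map $\mu \mapsto \tau_\mu$ is affine by construction. It is weak* continuous: for $a \in C (X, D)$ the function $x \mapsto \sm (a(x))$ is continuous, and composing with $E$ handles general $a$. A continuous bijection from the compact space $M_h(X)$ onto the Hausdorff tracial state space is a homeomorphism.

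The main obstacle is stage two, specifically the step $\tau = \tau \circ E$, which is where Lemma~\ref{L_2_AveApprox} is used. Exactness of $D$ is presumably included so that the minimal tensor product identification and the use of Lemma~\ref{L_1_TracesTensorProd} are unproblematic; the plan above does not otherwise rely on it.
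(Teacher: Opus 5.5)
Your proposal is correct and follows essentially the same route as the paper. $\tau_\mu = \rho_\mu \circ E$ is tracial (your direct check on monomials $f u^j$ is the standard argument behind the paper's one-line claim), every tracial state satisfies $\tau = \tau \circ E$ by Lemma~\ref{L_2_AveApprox}, and the restriction to $C(X,D)$ is identified through the argument of Lemma~\ref{L_1_TracesTensorProd}. Two of your points fill in things the paper leaves implicit: that lemma never uses simplicity of $A$, so it does apply to $A = C(X)$, and your compact-to-Hausdorff argument settles the homeomorphism claim; exactness of $D$ is not needed to identify $C(X,D)$ with $C(X) \otimes_{\min} D$, since $C(X)$ is nuclear.
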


\begin{proof}
For an invariant probability measure $\mu$, define
a tracial state $\rho_{\mu}$ on $C (X, D)$ by the formula
\[
\rho_{\mu} (f) = \int_X \sm (f (x)) d \mu (x).
\]
Obviously $\rho_{\mu}$ is a tracial state,
and it is $\af$-invariant because $\sm$ is invariant
under all automorphisms of $D$.
Lemma~\ref{L_1_TracesTensorProd} implies that these
are all the $\af$-invariant tracial states on $C (X, D)$.

Let
$E \colon C^* \bigl( \Z, \, C (X, D), \, \af \bigr) \to C (X, D)$
be the standard conditional expectation.
Then for any
$\af$-invariant tracial state $\rho_{\mu}$ defined above,
$\tau_{\mu} = \rho_{\mu} \circ E$ defines a tracial state
on $C^* \bigl( \Z, \, C (X, D), \, \af \bigr)$.
Now let
$\tau$ be any tracial state on
$C^* \bigl( \Z, \, C (X, D), \, \af \bigr)$.
Restricting
$\tau$ to $C (X, D)$ gives an $\af$-invariant tracial
state on $C (X, D)$, and so there is an invariant
measure $\mu$ such that $\tau \vert_{C (X,D)} = \rho_{\mu}$.
We want to show that $\tau = \tau_{\mu}$.
Since $\tau (f)
= \tau \vert_{C (X,D)} (f) = \rho_{\mu} (f)$
for every $f \in C (X,D)$, it suffices
to prove that, for every $\ep> 0$ and every $a \in
C^* \bigl( \Z, \, C (X, D), \, \af \bigr)$, we have
$\vert \tau (a) - \tau (E(a)) \vert < \ep$.
Apply
Lemma~\ref{L_2_AveApprox} to obtain $n \in \N$ and
$s_1 , \ldots , s_n \in C (X,D)$ such that
$\lVert s_k \rVert = 1$ for all $k$ and
\[
\Big\lVert E(a) - \frac{1}{n} \sum_{k=1}^{n} s_k a s_k^*
\Big\rVert < \ep.
\]
For each $k$, the trace property gives $\tau (a) =
\tau (a s_k^* s_k ) = \tau (s_k a s_k^* )$, and hence
$\ds{ \tau (a) = \tau \Big( \tfrac{1}{n}
\sum_{k=1}^{n} s_k a s_k^* \Big) }$. It follows that
\[
\vert \tau (a) - \tau (E(a)) \vert = \Big\vert \tau
\Big( \tfrac{1}{n} \sum_{k=1}^{n} s_k a s_k^* \Big) -
\tau (E(a)) \Big\vert \leq \Big\lVert
\frac{1}{n} \sum_{k=1}^{n} s_k a s_k^* - E(a) \Big\rVert
< \ep,
\]
as required.
\end{proof}

\begin{cor}\label{C_1_RR0_Condition}
Let $D$ be a simple unital exact C*-algebra with
a unique tracial state, let $X$ be a compact
metric space, let $h \colon X \to X$ be a minimal
homeomorphism, and obtain an action
$\af \colon X \to \Aut ( C (X, D))$ as in
Lemma~\ref{L_1_GetAuto}.
If the range of the
canonical map
$K_{0} ( C^* ( \Z, X, h ) ) \to
\Aff (T( C^* ( \Z, X, h ) )$
is dense, then the range of the canonical map
$K_{0} ( C^* \bigl( \Z, \, C (X, D), \, \af \bigr) ) \to
\Aff (T( C^* \bigl( \Z, \, C (X, D), \, \af \bigr) ))$
is dense.
\end{cor}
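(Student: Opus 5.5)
The plan is to compare the two trace spaces and the two pairings with $K_0$ through the unital inclusion
\[
\iota \colon C^* (\Z, X, h) \to C^* \bigl( \Z, \, C (X, D), \, \af \bigr) =: A,
\]
given by $f \mapsto f \otimes 1_D$ and $u \mapsto u$. This inclusion makes sense because $\af (f \otimes 1_D) = (f \circ h^{-1}) \otimes 1_D$. Write $B = C^* (\Z, X, h)$.

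By Proposition~\ref{P_1_TracialStates}, every tracial state on $A$ has the form $\tau_{\mu}$ for a unique $\mu \in M_h (X)$. Similarly, every tracial state on $B$ has the form $\tau'_{\mu} = \mu \circ E_B$; this is the case $D = \C$ of the same proposition, or the standard fact. For $f \in C (X)$ we have $\tau_{\mu} (\iota (f)) = \int_X f \, d \mu$, since $\sm (f (x) 1_D) = f (x)$. Both $\tau_{\mu} \circ \iota$ and $\tau'_{\mu}$ vanish on $f u^k$ for $k \neq 0$, because $\iota$ intertwines the conditional expectations. By continuity, $\tau_{\mu} \circ \iota = \tau'_{\mu}$. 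Hence the restriction map $\iota^* \colon T (A) \to T (B)$ is an affine homeomorphism, and it induces an isometric isomorphism
\[
\Phi \colon \Aff (T (B)) \to \Aff (T (A)), \qquad \Phi (g) (\tau) = g (\tau \circ \iota).
\]
Both trace spaces are compact and affinely homeomorphic to $M_h (X)$, and $\Phi$ is isometric in the supremum norm.

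Next, I check that the diagram relating the two pairings commutes. Let $\rho_B$ and $\rho_A$ be the canonical maps from $K_0$ to $\Aff (T)$, and let $\eta \in K_0 (B)$ be represented by a difference of projections $p, q \in M_n (B)$. Then
\[
\rho_A (\iota_* (\eta)) (\tau) = \tau (\iota (p)) - \tau (\iota (q)) = (\tau \circ \iota) (p) - (\tau \circ \iota) (q) = \rho_B (\eta) (\tau \circ \iota) = \Phi (\rho_B (\eta)) (\tau),
\]
where traces are extended to matrices in the usual way. Therefore
\[
\rho_A \circ \iota_* = \Phi \circ \rho_B.
\]
Since $\Phi$ is an isometric surjection, it carries a dense subset to a dense subset. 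So $\Phi (\rho_B (K_0 (B)))$ is dense in $\Aff (T (A))$. This set is contained in $\rho_A (K_0 (A))$, so the range of $\rho_A$ is dense.

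The only step that needs real care is the identification of $T (A)$ with $T (B)$ via $\iota^*$. That is exactly what Proposition~\ref{P_1_TracialStates} provides, using the exactness of $D$ and its unique trace. The rest is routine bookkeeping.
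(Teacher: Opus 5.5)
Your proof is correct and follows the same route as the paper. The paper invokes Proposition~\ref{P_1_TracialStates} to identify both trace spaces with $M_h (X)$ and uses that $C^* (\Z, X, h)$ sits inside $C^* \bigl( \Z, \, C (X, D), \, \af \bigr)$. You additionally make explicit what the paper leaves implicit: that this identification is restriction along the inclusion $\iota$, checked on elements $f u^k$ via the conditional expectations, and that $\rho_A \circ \iota_* = \Phi \circ \rho_B$ with $\Phi$ an isometric surjection, so density transfers.
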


\begin{proof}
Since $D$ has a unique tracial state,
Proposition~\ref{P_1_TracialStates} implies that
\[
T( C^* \bigl( \Z, \, C (X, D), \, \af \bigr) ) \cong
T ( C^* ( \Z, X, h ) ) \cong M_{h} (X).
\]
Since
$C^* ( \Z, X, h )$ is a subalgebra of
$C^* \bigl( \Z, \, C (X, D), \, \af \bigr)$, the result
follows.
\end{proof}

\begin{cor}\label{C_6327_RRZ}
Adopt the hypotheses of Corollary~\ref{C_1_RR0_Condition}.
If the range of the canonical map
$K_{0} ( C^* ( \Z, X, h ) ) \to \Aff (T ( C^* ( \Z, X, h ) )$
is dense, then
$C^* \bigl( \Z, \, C (X, D), \, \af \bigr)$ has real rank zero.
\end{cor}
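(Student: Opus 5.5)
The plan is to combine Corollary~\ref{C_1_RR0_Condition} with Corollary~\ref{C_6_CrossProdRR0}. Set $A = C^* \bigl( \Z, \, C (X, D), \, \af \bigr)$.

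The first step is to check that the hypotheses of Corollary~\ref{C_6_CrossProdRR0} hold. These require $D$ to be simple, unital, exact, and pure. Simplicity, unitality, and exactness are among the hypotheses of Corollary~\ref{C_1_RR0_Condition}, which we have adopted. Pureness does not appear in that list, so it has to be assumed as well; presumably the standing assumption in this section that $D$ is pure is meant to be included in the phrase ``adopt the hypotheses''. Without pureness of $D$ the conclusion need not follow from the machinery of the paper. Under that reading, Corollary~\ref{C_6_CrossProdRR0} says that $A$ has real rank zero if and only if $\rho \colon K_0 (A) \to \Aff (T (A))$ has dense range.

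The second step is to verify the density of the range of $\rho$. This is exactly the conclusion of Corollary~\ref{C_1_RR0_Condition}, applied under the assumption that $K_0 ( C^* ( \Z, X, h ) ) \to \Aff (T ( C^* ( \Z, X, h ) ))$ has dense range.

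The only point needing care is the one Corollary~\ref{C_1_RR0_Condition} already handles. There the inclusion $C^* (\Z, X, h) \hookrightarrow A$, given by $f \mapsto f \otimes 1_D$, is compatible with the identification of tracial state spaces in Proposition~\ref{P_1_TracialStates}. Consequently the affine functions coming from $K_0 ( C^* ( \Z, X, h ) )$ map to elements of the image of $K_0 (A)$, and the isomorphism $\Aff (T (C^* (\Z, X, h))) \cong \Aff (T (A))$ is isometric. Density therefore transfers, and real rank zero follows.
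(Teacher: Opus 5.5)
Your proof is correct and is the paper's argument: the paper's proof just combines Corollary~\ref{C_1_RR0_Condition}, which gives density of the range of $\rho$ for the crossed product, with Corollary~\ref{C_6_CrossProdRR0}, which says real rank zero is equivalent to that density. You are right that Corollary~\ref{C_6_CrossProdRR0} needs $D$ to be pure and that this is not among the stated hypotheses of Corollary~\ref{C_1_RR0_Condition}; the paper assumes it tacitly, and it does hold in the examples where this corollary is applied ($D = C^*_{\mathrm{r}} (F_n)$).
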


\begin{proof}
Combine Corollary~\ref{C_1_RR0_Condition}
with Corollary~\ref{C_6_CrossProdRR0}.
\end{proof}

\section{Pureness of Crossed products by Compact Group Actions with
 finite Rokhlin Dimension with commuting towers}\label{Sec_6326_FinRD}

In this section, we apply approximation results for crossed products,
where the action has finite Rokhlin dimension with commuting towers,
in order to establish pureness for certain crossed products.
The main result we use, due to Gardella, Hirshberg, and Santiago,
states that if $\alpha \colon G \to \Aut(A)$
has finite Rokhlin dimension with commuting towers,
then the crossed product $C^* \bigl( G, \, A, \, \af \bigr)$
can be locally approximated by a continuous $C (X)$-algebra
whose fibers are isomorphic to $A \otimes K(L^2 (G))$.

We begin with the following notation and definitions.

\begin{ntn}\label{N_6326_TrA}
Let $G$ be a compact group.
We write $\mathtt{Lt} \colon G \to \Aut (C (G))$
for the left action of $G$ on $C (G)$,
given by $\mathtt{Lt}_g (f) (t) = f (g^{-1} t)$.
\end{ntn}

The following lemma for finite Rokhlin dimension with commuting towers,
due to Gardella,
characterizes it in terms of elements of the C*-algebra itself.
See Definition~3.2 and Lemma~3.7 of~\cite{gardella17}.

\begin{lem}\label{D_6326_SeqSpl}
Let $G$ be a second countable compact group. Let $A$ be a separable C*-algebra,
and let $\alpha \colon G \to \Aut (A)$ be an action.
Let $d \in \Z \setminus \set{0}$.
Then $\RD (\alpha) \leq d$ if and only if, for any finite sets
$F \subseteq A$ and $S \subseteq C (G)$ and any $\ep > 0$,
there exist $d + 1$ completely positive contractive maps
\[
\psi_0, \psi_1, \ldots, \psi_d \colon C (G) \to A
\]
satisfying the following conditions:
\begin{enumerate}
\item
For any $a \in F$ and $f\in S$,
$\|[\psi_j(f), a]\| < \ep$ for $j = 0, 1, \ldots, d$.
\item
For any $f\in S$ and $g\in G$,
$\|\alpha_g (\psi_j(f)) - \psi_j(\mathtt{Lt}_g (f)) \| < \ep$
for $j = 0, 1, \ldots, d$
\item
$\|\psi_j(f_1) \psi_j(f_2) \| < \ep$
whenever $f_1$ and $f_2$ in $S$ are orthogonal.
\item
For any $a \in F$, $\|\sum_{j=0}^d \psi_j(1_{C (G)})a - a\| < \ep$.
\item
For any $f_1,f_2 \in S$,
$\|[\psi_j(f_1), \psi_k(f_2)]\| < \ep$ for $j, k = 0, 1, \ldots, d$.
\end{enumerate}
\end{lem}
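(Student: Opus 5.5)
The plan is to pass between the sequence-algebra formulation of Definition~3.2 of~\cite{gardella17} and the finitary conditions (1)--(5) by the usual lifting and reindexing argument. That definition says $\RD(\af) \leq d$ exactly when there are $d + 1$ equivariant completely positive contractive order zero maps
\[
\varphi_0, \varphi_1, \ldots, \varphi_d \colon C (G) \to F_{\infty, \af}(A)
\]
with pairwise commuting ranges and $\sum_{j=0}^d \varphi_j (1) = 1$. Here $F_{\infty, \af}(A) = (A_{\infty, \af} \cap A') / \mathrm{Ann}(A)$, and $A_{\infty, \af}$ is the $\af$-continuous part of $\ell^\infty(\N, A)/c_0(\N, A)$.

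\emph{Sequence algebra to finitary conditions.} Since $C (G)$ is nuclear, the Choi--Effros lifting theorem lets me lift each $\varphi_j$ to a completely positive contractive map $C (G) \to A_{\infty, \af} \cap A'$, and then to $\ell^\infty(\N, A)$. This gives coordinate maps $\psi_j^{(n)} \colon C (G) \to A$. Each required property holds exactly in the limit:
\begin{itemize}
\item $\varphi_j$ has range in the commutant of $A$, which gives (1);
\item order zero on the commutative algebra $C (G)$ gives $\varphi_j(f_1)\varphi_j(f_2) = 0$ for orthogonal $f_1, f_2$, which gives (3);
\item $\bigl(\sum_j \varphi_j(1) - 1\bigr) a = 0$ modulo $c_0$, which gives (4);
\item commuting ranges give (5).
\end{itemize}
For finite $F$ and $S$, each condition therefore holds to within $\ep$ for all large $n$. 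Condition~(2) needs uniformity in $g \in G$. Because the lifts lie in the $\af$-continuous part and $g \mapsto \mathtt{Lt}_g(f)$ is norm continuous, I will choose a finite $\dt$-net in the compact group $G$. Then I apply equivariance on the net and use equicontinuity to interpolate, giving a single large $n$ that works for all $g$.

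\emph{Finitary conditions to sequence algebra.} Using separability of $A$ and of $C (G)$ (from second countability of $G$), I will choose increasing finite sets $F_n$ with dense union in $A$, increasing finite sets $S_n$ with dense union in $C (G)$, and $\ep_n \to 0$. I take the maps $\psi_j^{(n)}$ supplied for $(F_n, S_n, \ep_n)$, form $(\psi_j^{(n)})_n \colon C (G) \to \ell^\infty(\N, A)$, and pass to the quotient. The image commutes with $A$ by~(1) and density. Equivariance, and membership in the continuous part, follow from~(2) together with continuity of $\mathtt{Lt}$. Condition~(4) gives $\sum_j \varphi_j(1) = 1$ in $F_{\infty, \af}(A)$, and~(5) gives commuting ranges. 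Each map is completely positive and contractive as a limit of such maps.

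\emph{Main obstacle.} The delicate step is order zero from~(3), since~(3) is only assumed for orthogonal pairs inside $S$. I will arrange that the $S_n$ contain enough orthogonal pairs. For each pair in a countable dense set of positive functions and each rational $\eta$, I add the cut-downs $(f_1 - g_\eta)_+$ and $(f_2 - g_\eta)_+$. These have disjoint supports and approximate any orthogonal pair $f_1, f_2$. It then follows that $\varphi_j(f_1)\varphi_j(f_2) = 0$ for all orthogonal positive $f_1, f_2 \in C (G)$. For a completely positive contractive map on the commutative algebra $C (G)$, this is equivalent to order zero.
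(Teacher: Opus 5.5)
\textbf{Verdict: the approach is the standard one, but the lifting direction has a genuine gap when $A$ is nonunital.} The paper gives no proof of its own here; it refers to Definition~3.2 and Lemma~3.7 of \cite{gardella17}. Your strategy (Choi--Effros lifting in one direction, reindexing along increasing finite sets in the other) is the standard argument behind that result. Your direction from (1)--(5) to maps into $F_{\infty,\alpha}(A)$ is fine.

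\textbf{The gap.} The statement allows nonunital $A$, and you took the target to be $F_{\infty,\alpha}(A) = (A_{\infty,\alpha} \cap A')/\mathrm{Ann}(A)$. So for a lift $\tilde\varphi_j = (\psi_j^{(n)})_n$ the relations hold only modulo $\mathrm{Ann}(A)$. The elements $\tilde\varphi_j(f_1)\tilde\varphi_j(f_2)$ (for $f_1 \perp f_2$), $[\tilde\varphi_j(f_1), \tilde\varphi_k(f_2)]$, and $\alpha_{\infty,g}(\tilde\varphi_j(f)) - \tilde\varphi_j(\mathtt{Lt}_g f)$ merely lie in $\mathrm{Ann}(A)$. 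In coordinates this gives $\lim_n \|\psi_j^{(n)}(f_1)\psi_j^{(n)}(f_2)\,a\| = 0$ for $a \in A$, not $\lim_n \|\psi_j^{(n)}(f_1)\psi_j^{(n)}(f_2)\| = 0$. Conditions (2), (3), and (5) contain no factor from $F$. A lift is only determined modulo $\mathrm{Ann}(A)$, which is in general nonzero. So your claim that ``each required property holds exactly in the limit'' fails for (2), (3), and (5); only (1) and (4) come for free.

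\textbf{The missing step: compress by an invariant approximate unit.} Choose a positive contraction $e \in A^{\alpha}$ with $ea \approx a$ for all $a \in F \cup F^*$. To get it, average over $G$ an approximate-unit element that acts as a unit on the compact set $\{\alpha_g(a) \colon g \in G, \ a \in F \cup F^*\}$. Then also $e^{1/2}a \approx a \approx a e^{1/2}$. Now use $\psi_j = e^{1/2}\psi_j^{(n)}(\cdot)\,e^{1/2}$ for $n$ large.
\begin{itemize}
\item Since $\tilde\varphi_j(f)$ commutes with $e \in A$ in $A_\infty$, each defect ends up multiplied by an element of $A$. For example, with $x_i = \psi_j^{(n)}(f_i)$ one has $e^{1/2}x_1 e x_2 e^{1/2} \approx e^{1/2}x_1x_2e^{3/2} \to 0$. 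Condition (5) is handled the same way.
\item Condition (2) works because $e$ is $\alpha$-invariant.
\item Conditions (1) and (4) survive because $e^{1/2}$ is approximately a unit for $F$.
\end{itemize}
For unital $A$ one has $\mathrm{Ann}(A) = 0$, and your argument is complete as written.

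\textbf{Two smaller points.}
\begin{itemize}
\item The asymptotic equicontinuity you use for uniformity in $g$ needs justification. It does hold for every representative of an element of $A_{\infty,\alpha}$, by a Baire category argument near the identity.
\item In the orthogonal-pairs device, the cut-downs at level $\eta$ have disjoint supports only when $f_1, f_2$ are within $\eta/2$ of an orthogonal pair. Keep only such pairs.
\end{itemize}
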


The following result is an important structure theorem
due to Gardella, Hirshberg, and Santiago.

\begin{prp}[Corollary 4.6 of~\cite{GdlHrbStg}]\label{2003_structure_cx}
Let $G$ be a second countable compact group,
let $X$ be a compact Hausdorff space,
and let $A$ be a separable C*-algebra.
Let $G \curvearrowright X$ be a free continuous action of $G$ on~$X$,
and let $\alpha \colon G \to \Aut (A)$ be an action of $G$ on~$A$.
Equip the C*-algebra $C (X, A)$ with the diagonal action of $G$,
denoted by $\gamma$.
Then the crossed product C*-algebra
$C^* \bigl( G, \, C(X,A), \, \gm \bigr)$
is a continuous $C (X/G)$-algebra,
each of whose fibers is isomorphic to $A \otimes K(L^2 (G))$.
If $G$ is furthermore a Lie group,
then $C^* \bigl( G, \, C(X,A), \, \gm \bigr)$ is the
section algebra of a locally trivial bundle over $X/G$
with fibers isomorphic to $A \otimes K(L^2 (G))$.
\end{prp}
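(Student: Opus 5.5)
The approach is to identify the central subalgebra, compute the fibers by restricting to orbits and untwisting the action, and get continuity and local triviality from local product structure of the free action. Throughout, write $B = C^* \bigl( G, \, C(X,A), \, \gm \bigr)$ and let $\pi \colon X \to X/G$ be the quotient map.

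\emph{The $C (X/G)$-algebra structure.} I identify $C(X/G)$ with the fixed point algebra $C(X)^G$. A $G$-invariant function $f \in C(X)$ commutes with $C(X,A)$. Since $\gm_g (f \otimes 1) = f \otimes 1$, it also commutes with the canonical unitaries $u_g$ in the multiplier algebra. Hence $C(X)^G \otimes 1$ maps unitally into the center of $M(B)$, and $B$ is a $C(X/G)$-algebra. For an orbit $Gx$, the fiber is $B / C_0 \bigl( (X/G) \setminus \{Gx\} \bigr) B$. Because $G$ is compact, hence amenable, the full crossed product is exact with respect to the invariant ideal $C_0 (X \setminus Gx, \, A)$. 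Therefore the fiber over $Gx$ is $C^* \bigl( G, \, C(Gx, A), \, \gm \bigr)$.

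\emph{Identifying the fibers.} Freeness gives a $G$-equivariant homeomorphism $G \to Gx$, namely $g \mapsto gx$, so $C(Gx, A) \cong C(G, A)$ with the action $\mathtt{Lt} \otimes \af$. The map $\Phi$ given by $\Phi (F)(t) = \af_{t}^{-1} (F(t))$ is an automorphism of $C(G,A)$. A short computation shows that it intertwines $\mathtt{Lt} \otimes \af$ with $\mathtt{Lt} \otimes \id_A$. Hence the fiber is isomorphic to $\bigl( C(G) \rtimes_{\mathtt{Lt}} G \bigr) \otimes A$. By the Stone--von Neumann / Green imprimitivity theorem, $C(G) \rtimes_{\mathtt{Lt}} G \cong K (L^2 (G))$. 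This gives fibers isomorphic to $A \otimes K(L^2(G))$.

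\emph{Continuity, which is the main obstacle.} Upper semicontinuity of $Gx \mapsto \|b(Gx)\|$ is automatic for $C(X/G)$-algebras, so the work is lower semicontinuity. My first attempt uses local product structure. If $U \subseteq X/G$ is open and $\pi^{-1}(U) \cong U \times G$ equivariantly, then the untwisting map above, applied with $U$ as a parameter, gives
\[
C_0 \bigl( \pi^{-1} (U), \, A \bigr) \rtimes G \cong C_0 (U) \otimes A \otimes K(L^2 (G)).
\]
This is a trivial bundle, hence continuous over $U$. Continuity is local, so it then holds over all of $X/G$.

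In the Lie case, Gleason's slice theorem provides local cross-sections for free actions, so such $U$ cover $X/G$. This proves both continuity and the local triviality statement at once.

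For general compact $G$ there need not be local cross-sections, so I would argue differently. I would use that $B$ is Morita equivalent to the generalized fixed point algebra $C(X,A)^{\gm}$, which holds for free, hence saturated, actions of compact groups. That algebra is the section algebra of the induced field $X \times_G A$ over $X/G$. Its norm function is $Gx \mapsto \sup_{y \in Gx} \|F(y)\|$, which is continuous by compactness of $G$. Morita equivalence over $C(X/G)$ preserves continuity of fields, which completes the argument.
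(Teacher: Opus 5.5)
Your argument is correct. The paper itself gives no proof of this proposition: it quotes Corollary~4.6 of \cite{GdlHrbStg} and uses it as a black box, so you are supplying a self-contained derivation where the paper has only a citation.

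The pieces are sound:
\begin{itemize}
\item $C(X)^G \otimes 1$ is central in the multiplier algebra, which gives the $C(X/G)$-structure.
\item The fibers follow from exactness of the full crossed product. This exactness holds for every locally compact group, so amenability is not the reason it works.
\item The untwisting $F \mapsto \bigl( t \mapsto \alpha_t^{-1}(F(t)) \bigr)$ converts $\mathtt{Lt} \otimes \alpha$ into $\mathtt{Lt} \otimes \mathrm{id}_A$.
\item $C(G) \rtimes_{\mathtt{Lt}} G \cong K(L^2(G))$.
\end{itemize}

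The paper only uses the Lie case. In Theorem~\ref{T_6325_RDim}, local triviality is exactly what is fed into Lemma~\ref{L_6327_CtFld}. For that case, Gleason's local cross-sections plus your parametrized untwisting already give continuity, the fibers, and local triviality, by an elementary argument.

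The Morita-equivalence step is needed only for continuity when $G$ is not Lie, and that is where the real work is hidden. If you write it out, you should justify three things:
\begin{enumerate}
\item The diagonal action $\gamma$ is saturated. This follows from the Stone--Weierstrass proof of saturation for free actions on $C(X)$ (the map $(s,x) \mapsto (x, s^{-1}x)$ is injective by freeness), combined with an approximate-unit argument in $A$.
\item The imprimitivity bimodule, a completion of $C(X,A)$, is $C(X/G)$-balanced. This holds because $C(X)^G \otimes 1$ is central in $M(C(X,A))$.
\item Continuity transfers across the equivalence. A $C(Y)$-algebra is continuous exactly when its structure map $\mathrm{Prim} \to Y$ is open (Lee's theorem), and the Rieffel correspondence intertwines the two structure maps.
\end{enumerate}

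Finally, continuity of the norm function on $C(X,A)^{\gamma}$ is simpler than you state. For $F$ fixed by $\gamma$ one has $\|F(gx)\| = \|\alpha_g(F(x))\| = \|F(x)\|$, so $x \mapsto \|F(x)\|$ is already a continuous $G$-invariant function. You only need the easy check that the fiber of $C(X,A)^{\gamma}$ at $Gx$ is given by restriction to the orbit.
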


We now recall the definition of sequentially split homomorphisms
(Definition~2.1 of~\cite{barlak}).

\begin{dfn}
Let $A$ and $B$ be two C*-algebras. A homomorphism
$\ph\colon A \rightarrow B$ is \emph{sequentially split},
if there exists a commutative diagram of homomorphisms
of the form
\[
\begin{tikzcd}
A \arrow[rr, "\iota"] \arrow[dr, "\ph"'] & & A_\infty\\
& B \arrow[ur, "\ps" ' ] &
\end{tikzcd}
\]
where the horizontal map $\iota$ is the canonical inclusion.
\end{dfn}

The following theorem, which is Proposition~4.11 of~\cite{GdlHrbStg},
is a key approximation result used to establish permanence properties
for crossed products by actions with finite Rokhlin dimension with
commuting towers. Its statement involves the notion of a
\emph{positively existential embedding}, which, in the setting of
separable C*-algebras, coincides with that of a sequentially
split homomorphism.

\begin{thm}\label{2003_rokhlin_dimension}
Let $\alpha \colon G \to \Aut (A)$ be an action
of a second countable compact group on a separable C*-algebra
such that $\RD (\alpha) < \infty$.
Then there exist a compact metric space $X$
and a free action $G \curvearrowright X$ such that, by endowing $C (X, A)$ with the diagonal $G$-action $\gamma$, the
canonical embedding
\[
\rho \colon C^* \bigl( G, \, A, \, \af \bigr) \to
C^* \bigl( G, \, C(X,A), \, \gm \bigr)
\]
is sequentially split.
Furthermore, if $G$ is finite dimensional,
then $X$ may be chosen to be finite dimensional.
\end{thm}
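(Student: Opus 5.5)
The plan is to build the space $X$ directly out of the Rokhlin towers. Concretely, take $X = G^{*(d+1)}$, the $(d+1)$-fold topological join of $G$ with itself, where $d = \RD(\alpha)$. It carries the diagonal left-translation action of $G$, and this action is free. Points of $X$ are formal combinations $\sum_j t_j g_j$ with $t_j \ge 0$ and $\sum_j t_j = 1$. Since $G$ acts freely on each coordinate $g_j$ with $t_j > 0$, the action on $X$ is free.

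$X$ is compact metrizable because $G$ is. If $G$ is finite dimensional, then $\dim X \le (d+1)(\dim G + 1) - 1 < \infty$, which gives the last claim. The algebra $C(X)$ is the universal unital C*-algebra generated by $d+1$ pairwise commuting order zero c.p.c.\ maps $\theta_j \colon C(G) \to C(X)$ whose values on $1$ sum to $1$. (Equivalently, $C(X)$ is the universal algebra generated by commuting cones over $C(G)$ with unit sum.) The left action $\mathtt{Lt}$ on each copy of $C(G)$ induces the $G$-action on $C(X)$.

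Next, pass from the approximate maps of Lemma~\ref{D_6326_SeqSpl} to exact ones in a central sequence algebra. Choose increasing finite sets $F_n$ and $S_n$ with dense unions, and take $\ep_n \to 0$. Apply Lemma~\ref{D_6326_SeqSpl} to get maps $\psi_j^{(n)}$, and assemble them into maps $\psi_j \colon C(G) \to A_\infty \cap A'$. Conditions (1)--(5) become exact in the limit.
\begin{itemize}
\item Each $\psi_j$ is order zero, by~(3) together with the standard fact that approximate orthogonality preservation yields order zero in the limit.
\item The $\psi_j$ take values in the relative commutant, by~(1).
\item They are equivariant for $\mathtt{Lt}$ and the induced action $\alpha_\infty$, by~(2). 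Their ranges lie in the subalgebra $A_{\infty,\alpha}$ of elements on which $G$ acts continuously.
\item They pairwise commute, by~(5).
\item $\sum_j \psi_j(1)$ acts as a unit on $A$, by~(4).
\end{itemize}
Passing to the quotient $F_\infty(A) = (A_\infty \cap A')/\mathrm{Ann}(A)$, the universal property of $C(X)$ then gives a unital equivariant $*$-homomorphism $\Phi \colon C(X) \to F_{\infty,\alpha}(A)$.

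Now define $\psi_0 \colon C(X,A) = C(X) \otimes A \to A_{\infty,\alpha}$ by $f \otimes a \mapsto \Phi(f)\,a$. This is well defined because $\Phi(f)$ is only determined modulo $\mathrm{Ann}(A)$ and we multiply by $a$. It is a $*$-homomorphism because $\Phi(C(X))$ commutes with $A$. It is equivariant for $\gamma$ and $\alpha_\infty$, since $\Phi$ is equivariant. Unitality of $\Phi$ gives $\psi_0(1 \otimes a) = a$. The equivariant map $\psi_0$ induces
\[
C^*\bigl(G, C(X,A), \gamma\bigr) \to C^*\bigl(G, A_{\infty,\alpha}, \alpha_\infty\bigr).
\]

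It remains to map $C^*(G, A_{\infty,\alpha}, \alpha_\infty)$ into $C^*(G,A,\alpha)_\infty$ compatibly with the inclusions. Here continuity of the action on $A_{\infty,\alpha}$ is essential. The natural map $L^1(G, A_{\infty,\alpha}) \to C^*(G,A,\alpha)_\infty$ is obtained by lifting a continuous function $G \to A_{\infty,\alpha}$ to representing sequences. The check needed is that this map is contractive for the universal crossed product norm. For compact $G$ one can use the sequential-splitting machinery of \cite{barlak}: equivariantly sequentially split maps induce sequentially split maps of crossed products. On elements $f \in C(G,A)$, the composite
\[
C^*(G,A,\alpha) \xrightarrow{\ \rho\ } C^*(G, C(X,A), \gamma) \to C^*(G,A,\alpha)_\infty
\]
sends $f$ to its constant sequence, because $\psi_0(1 \otimes a) = a$. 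Hence the diagram commutes with the canonical inclusion $\iota$, so $\rho$ is sequentially split.

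I expect the main obstacle to be the final step. One has to make the passage from an equivariant map into $A_{\infty,\alpha}$ to a map of crossed products into $C^*(G,A,\alpha)_\infty$ rigorous, since $G$ is not discrete and norm continuity must be controlled. The first thing I would try is to invoke the equivariant version of Barlak--Szab\'o's result that equivariant sequential splitness passes to crossed products. A secondary technical point is verifying the universal description of $C(G^{*(d+1)})$ as generated by commuting order zero maps, which I would take from Gardella's work.
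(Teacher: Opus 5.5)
The paper does not prove this statement; it quotes it as Proposition~4.11 of \cite{GdlHrbStg}. Your outline is a correct reconstruction of the argument behind that result: the universal property of $C(G^{*(d+1)})$ turns the commuting-tower order zero maps into a unital equivariant homomorphism $C(G^{*(d+1)}) \to F_{\infty, \af}(A)$, and this gives an equivariant splitting $C(X, A) \to A_{\infty, \af}$ of $a \mapsto 1 \otimes a$. The step you flag as the main obstacle is exactly the result of \cite{barlak} that equivariantly sequentially split homomorphisms induce sequentially split homomorphisms of crossed products. The contractivity you worry about comes for free: the equivariant coordinate evaluations give a map $C^*(G, \ell^\infty_{\af}(\N, A), \af) \to \ell^\infty(\N, C^*(G, A, \af))$, and you then divide out $c_0$, using $A_{\infty,\af} = \ell^\infty_{\af}(\N, A)/c_0(\N, A)$ and exactness of the full crossed product.
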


In \cite{barlak}, it is shown that the existence of a sequentially split homomorphism from the C*-algebra $A$ to the C*-algebra $B$
implies that several regularity properties pass from $B$ to $A$.
The following results show that this is also the case for pureness.

\begin{prp}\label{0704_seq_split_comparison_divisibility}
Let $A$  and $B$ be separable C*-algebras, and
$\ph \colon A\rightarrow B$ be a sequentially split homomorphism. If $B$ is pure, then so is $A$.
\end{prp}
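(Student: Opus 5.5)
We prove separately that $A$ has strict comparison and that $A$ is almost divisible; pureness then follows from Definition~\ref{D_PureAlg}. The basic tools are the two maps $\ph \colon A \to B$ and $\ps \colon B \to A_\infty$ with $\ps \circ \ph = \iota$. On Cuntz semigroups (after stabilizing, replacing $\ph$ by $\ph \otimes \id_K$ and $\ps$ by $\ps \otimes \id_K$, and using $(A_\infty) \otimes K \subseteq (A \otimes K)_\infty$), this gives a factorization of $\iota_* \colon \Cu (A) \to \Cu ((A\otimes K)_\infty)$ through $\Cu (B)$. The remaining step is to pass from relations in the sequence algebra back to relations in $A$. For this we use the standard fact that if $a, b \in (A \otimes K)_+$ and $a \precsim b$ in $(A \otimes K)_\infty$, then $a \precsim_A b$. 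Indeed, from $\| v b v^* - a \| < \ep$ with $v = (v_n)_n$ we get $\| v_n b v_n^* - a \| < \ep$ for suitable $n$, and hence $(a - \ep)_+ \precsim_A b$ for every $\ep > 0$.

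\emph{Comparison.} Suppose $(k+1)\cc{x}_A \leq k \cc{a}_A$. Applying $\ph$ gives $(k+1)\cc{\ph(x)}_B \leq k \cc{\ph(a)}_B$. Strict comparison in $B$ then yields $\cc{\ph(x)}_B \leq \cc{\ph(a)}_B$. Applying $\ps$ gives $x \precsim a$ in the sequence algebra, and the fact above gives $x \precsim_A a$. To keep the argument inside matrix algebras, we may work in $\W (A)$ using Proposition~\ref{CP_6325_WA_Cmp}.

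\emph{Almost divisibility.} We use the characterization in Proposition~\ref{P_6324_WDiv}. Let $a \in M_\infty (A)_+$, $k \in \N$, and $\ep > 0$. Apply almost divisibility of $B$ to $\ph(a)$ (via Proposition~\ref{P_6324_WDiv} with $\ep/2$) to get $c \in M_\infty(B)_+$ with $k[c] \leq [\ph(a)]$ and $[(\ph(a) - \ep/2)_+] \leq (k+1)[c]$. Then $d = \ps(c) \in M_\infty(A_\infty)_+$ satisfies $k[d] \leq [a]$ and $[(a - \ep/2)_+] \leq (k+1)[d]$ in the sequence algebra. Lift $d$ to a bounded sequence $(d_n)$ of positive elements in $M_r(A)$.

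The main obstacle is extracting a single $d_n$ that witnesses both inequalities in $A$, with controlled losses. For the lower bound, $(a - \ep/2)_+ \precsim (k+1) d$ in the sequence algebra gives, by the fact above, $(a - \ep)_+ \precsim_A \bigoplus^{k+1} (d_n - \dt)_+$ for some small $\dt > 0$ and all large $n$. This uses R{\o}rdam's lemma: an approximate relation $\| v (\oplus d) v^* - (a-\ep/2)_+ \| < \ep/2$ implies a cut-down relation with $(d - \dt)_+$, and that relation transfers to the coordinates $n$. For the upper bound, $k[d] \leq [a]$ implies that $\bigoplus^k (d_n - \dt)_+ \precsim_A a$ for infinitely many $n$. (Cuntz subequivalence in $A_\infty$ of $d$ to $a$ gives, coordinatewise, $\|w_n a w_n^* - \oplus d_n\|$ small for large $n$.) Taking $b = (d_n - \dt)_+$ for such an $n$ then verifies the criterion of Proposition~\ref{P_6324_WDiv}. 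The care needed is to choose $\dt$ before $n$ in the first estimate, so that the same cut-down works in both estimates. This is handled by first applying divisibility to $\ph((a - \ep/4)_+)$-type cut-downs and shrinking $\ep$ appropriately.
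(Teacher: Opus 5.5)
Your proposal is correct and follows essentially the same route as the paper for almost divisibility: divide $\ph(a)$ in $B$, push the divisor through $\ps$ into the sequence algebra, fix the cut-down $\dt$ from the lower-bound relation first, and then pick one large coordinate $n$ that witnesses both relations. The paper gets $\dt = \ep/(32M)$ explicitly from a uniform bound $M \geq \|e_n\|^2$ rather than from R{\o}rdam's lemma, and for strict comparison it simply cites Theorem~2.9 of~\cite{barlak} instead of giving your direct argument (apply $\ph$, use almost unperforation of $\Cu(B)$, apply $\ps$, then use that $\iota(x) \precsim \iota(a)$ in $(A \otimes K)_\infty$ forces $x \precsim_A a$), which is fine; your closing remark about cut-downs of $\ph((a - \ep/4)_+)$ is unnecessary, since the argument already closes as you set it up.
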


\begin{proof}
First observe that if $\varphi \colon A \to B$ is a sequentially split homomorphism, then the induced (amplified) homomorphism
\[
\tilde{\varphi} \colon A \otimes K \to B \otimes K
\]
is again sequentially split. By a mild abuse of notation, we continue to denote this amplified map by $\varphi$. We thus obtain a commutative diagram
\[ \begin{tikzcd} A \otimes K \arrow[rr, "\iota"] \arrow[dr, "\ph"'] & & (A \otimes K )_\infty\\ & B \otimes K \arrow[ur, "\ps" ' ] & \end{tikzcd} \]
where $\iota$ denotes the canonical inclusion of $A \otimes K$ into $(A \otimes K)_\infty$ as constant sequences.

Since $B$ is pure, $\Cu(B)$ has $0$-almost comparison and is $0$-almost divisible. Moreover, $\Cu(A)$ has $0$-almost comparison by Theorem~2.9 of~\cite{barlak}. It remains to show that $\Cu(A)$ is $0$-almost divisible. For this, let
$\ep > 0$, $k\in \Nz$, and $a\in (A \otimes K)_+$. Fix
$\left( a_n\right)_{n=1}^\infty$ to be a representative
sequence of $(\ps \circ \ph) (a)$. Since
$(\ps \circ \ph) (a) = \iota (a)$, we can choose $N_1$,
such that for all $n \geq N_1$,
\begin{equation*}
\| a_n - a\| < \frac{\ep}{4}.
\end{equation*}
Thus, for all $n \geq N_1$, we get the following subequivalences
\begin{equation}\label{0104_divisibility_seq}
\left( a_n - \frac{\ep}{4}\right)_+ \precsim_A a\quad\text{and}\quad (a- \ep)_+ \precsim_A \left(a_n -\frac{\ep}{2}\right)_+ \precsim_A \left(a_n - \frac{7\ep}{16} \right)_+.
\end{equation}
Furthermore, since $\Cu(B)$ is $0$-almost divisible, there exists $b \in (B \otimes K)_+$ such that
\begin{equation}\label{0104_divisibility_seq_split}
k \cc{b}_B \leq \inn{\left(\ph(a)-\frac{\ep}{4}\right)_+}_B\quad\text{and}\quad \inn{\left(\ph(a)-\frac{3\ep}{8}\right)_+}_B \leq (k+1) \cc{b}_B.
\end{equation}
From the second inequality in the above equation, we obtain
\begin{equation*}
\left(\ph(a)-\frac{3\ep}{8}\right)_+\precsim_B b \otimes 1_{k+1},
\end{equation*}
and hence
\begin{equation*}
\left((\ps \circ \ph)(a)-\frac{3\ep}{8}\right)_+\precsim_{{(A \otimes K)}_\infty} \ps(b) \otimes 1_{k+1}.
\end{equation*}
Let $e \in (A \otimes K)_\infty$ be such that
\begin{equation*}
\left\|e^* \left(\ps (b) \otimes 1_{k+1}\right) e - \left((\ps \circ \ph)(a)- \frac{3\ep}{8}\right)_+\right\| < \frac{ \ep}{32}.
\end{equation*}
Fix representative sequences $(e_n)_{n=1}^\infty$ and $(b_n)_{n=1}^\infty$ for $e$ and $\psi(b)$, respectively, and choose $N_2 \in \Nz$ such that, for all $n \geq N_2$,
\begin{equation*}
\left\| e_n^* (b_n \otimes 1_{k+1} ) e_n - \left( a_n - \frac{3\ep}{8}\right)_+\right\| < \frac{\ep}{32}.
\end{equation*}
It follows that, for all $n \geq N_2$,
\begin{equation}\label{0104_split_1}
\left( \left(a_n -\frac{3\ep}{8}\right)_+ - \frac{\ep}{16} \right)_+ \precsim_A \left(e_n^* (b_n \otimes 1_{k+1}) e_n -\frac{\ep}{32}\right)_+.
\end{equation}
Let $M\in \Nz$ be such that $\|e_n\|^2 \leq M$ for all $n$. By
Lemma~1.4(8) of~\cite{Phl40},
\begin{equation}\label{0104_split_7}
\left(\left(a_n -\frac{3\ep}{8}\right)_+ - \frac{\ep}{16}\right)_+=  \left(a_n - \frac{7\ep}{16}\right)_+,
\end{equation}
and, by Lemma~1.4(6) of~\cite{Phl40} at the second step and
Lemma~1.7 of~\cite{Phl40} at the third step, we obtain
\begin{equation}\label{0104_split_2}
\begin{split}
\left(e_n^*(b_n \otimes 1_{k+1}) e_n -\frac{\ep}{32}\right)_+ & =
\left(e_n^* (b_n \otimes 1_{k+1})^{1/2} (b_n \otimes 1_{k+1})^{1/2} e_n -\frac{\ep}{32}\right)_+ \\
& \sim_A \left((b_n \otimes 1_{k+1})^{1/2} e_n e_n^* (b_n \otimes 1_{k+1})^{1/2} - \frac{\ep}{32}\right)_+\\
&\precsim_A \left(\|e_n\|^2 (b_n \otimes 1_{k+1}) -\frac{\ep}{32}\right)_+\\
& \precsim_A \left(b_n \otimes 1_{k+1}- \frac{\ep}{32 M}\right)_+.
\end{split}
\end{equation}
From \eqref{0104_split_1}, \eqref{0104_split_7}, and \eqref{0104_split_2}, we see that
\begin{equation}\label{0104_split_div_second}
\left(a_n - \frac{7\ep}{16}\right)_+ \precsim_A \left(b_n \otimes 1_{k+1} - \frac{\ep}{32 M}\right)_+.
\end{equation}
Moreover, the first inequality in \eqref{0104_divisibility_seq_split} implies that
\begin{equation*}
\ps(b) \otimes 1_{k} \precsim_{(A\otimes K)_\infty} \left( (\ps \circ \ph) (a) -\frac{ \ep}{4}\right)_+.
\end{equation*}
Choose $c\in (A\otimes K)_\infty$ such that
\begin{equation*}
\left\| c^* \left((\ps\circ \ph)(a)-\frac{\ep}{4}\right)_+ c - \ps(b) \otimes 1_k\right\| < \frac{\ep}{32 M}.
\end{equation*}
Let $(c_n)_{n=1}^\infty$ be a representative sequence for $c$, and let $N_3\in \Nz$ be such that, for all $n\geq N_3$,
\begin{equation}
\left\|c_n^* \left(a_n - \frac{\ep}{4}\right)_+ c_n - b_n \otimes 1_{k}\right\| <\frac{\ep}{32M}.
\end{equation}
Then, for all $n\geq N_3$,
\begin{equation}\label{0104_split_div_first}
\left(b_n \otimes 1_k -\frac{\ep}{32 M}\right)_+ \precsim_A \left( a_n -\frac{\ep}{4}\right)_+.
\end{equation}
Finally, let $l:= \max \{N_1, N_2, N_3\}$. Using the first part of \eqref{0104_divisibility_seq} together with \eqref{0104_split_div_first}, we obtain
\[
k\inn{\left(b_l - \frac{\ep}{32 M}\right)_+}_A \leq \inn{\left( a_l- \frac{\ep}{4} \right)_+}_A \leq  \inn{a}_A,
\]
and by second part of (\ref{0104_divisibility_seq_split}) together with (\ref{0104_split_div_second}), we obtain
\[
\inn{(a- \ep)_+ }_A \leq \inn{\left(a_l -\frac{7\ep}{16}\right)_+}_A \leq (k+1) \inn{\left(b_l -\frac{\ep}{32 M}\right)_+}_A,
\]
which gives $0$-almost divisibility of $\Cu(A)$.
\end{proof}

In Theorem~4.24 of~\cite{GdlHrbStg},
it is shown that if $G$ is a compact group
and $\alpha \colon G \to \Aut(A)$ is an action with $\RD(\alpha) \leq 1$,
and if $A$ has stable rank one and $K_1(I) = 0$
for all ideals $I \subseteq A$,
then strict comparison passes from $A$ to the crossed product
$C^* ( G, \, A, \, \af )$.
The proof relies on Theorem~2.6 of~\cite{AnBoPe}, which establishes that, under these hypotheses, pointwise Cuntz subequivalence in the $C(X/G)$-algebra implies global Cuntz subequivalence. In light of recent developments in \cite{SethVil}, we obtain strict comparison and almost divisibility for the crossed product under the weaker assumption of finite Rokhlin dimension with commuting towers.
We state the following additional lemma before turning to the main result.

\begin{lem}\label{L_6327_CtFld}
Let $D$ be a simple pure \ca, let $X$ be a \chs,
and let $A$ be the section algebra of a locally trivial \ct{} field
over~$X$ with fiber~$D$.
Then $A$ is pure.
\end{lem}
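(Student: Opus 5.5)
The plan is to reduce to the trivial-bundle case $C(Y, D)$, where pureness is available from \cite{SethVil}, and then glue. Since $X$ is compact and the field is locally trivial, choose a finite closed cover $X = Y_1 \cup \cdots \cup Y_N$ such that each restriction $A|_{Y_i}$ is isomorphic to $C(Y_i, D)$. By Corollary~5.9 and Theorem~5.11 of~\cite{SethVil}, each $C(Y_i, D)$ is pure. This uses the standing divisibility hypothesis on $D$, which is supplied here by pureness of $D$. It also works for arbitrary compact metric (or compact Hausdorff, via a direct limit argument) $Y_i$, with no dimension restriction.

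Next we glue, by induction on $N$. Set $Z_k = Y_1 \cup \cdots \cup Y_k$. Then $A|_{Z_{k+1}}$ is the pullback of $A|_{Z_k}$ and $A|_{Y_{k+1}}$ over $A|_{Z_k \cap Y_{k+1}}$, and the restriction maps are surjective. Equivalently, there is an extension
\[
0 \to C_0(Y_{k+1} \setminus Z_k, D) \to A|_{Z_{k+1}} \to A|_{Z_k} \to 0 .
\]
The ideal here is a hereditary subalgebra (in fact an ideal) of the pure algebra $C(Y_{k+1}, D)$.

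The cleanest route to the inductive step is probably to avoid extensions and instead realize $A$ as a recursive subhomogeneous algebra over $D$ in the sense of Definition~3.2 of~\cite{ArBcPh2}, with matrix size one and gluing maps given by fiberwise automorphisms. Pureness would then follow from Proposition~6.5 of~\cite{SethVil}, exactly as in Lemma~\ref{L_2_OrbBreakPure}. A recursive subhomogeneous decomposition is precisely an iterated pullback of algebras $C(Y_i, M_{n_i}(D))$ along restriction maps to closed subsets. So I would check that the gluing data of a locally trivial bundle, namely clutching by continuous maps into $\Aut(D)$ in the point-norm topology, fit the definition in \cite{ArBcPh2}. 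If that definition only allows gluing maps of a restricted form, such as unitary conjugation, the fallback is to rerun the proof of Proposition~6.5 of~\cite{SethVil}. That proof should only use pullback stability of pureness when the pieces are of the form $C(Y, D)$ with surjective restriction maps, and automorphism twists in the gluing do not affect Cuntz-semigroup arguments.

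An alternative, if $X$ is metrizable, is to invoke the pointwise-to-global comparison results in \cite{SethVil} for $C(X)$-algebras with pure fibers. On this route one uses that $A$ is a continuous $C(X)$-algebra whose fibers $D$ are pure, together with local triviality, which gives the local divisibility needed to upgrade fiberwise divisibility to global divisibility. Either way, the main obstacle is the gluing step: pureness is not known to pass to general extensions or pullbacks, so the argument must exploit that all pieces are locally of the form $C(Y, D)$. I expect verifying the recursive subhomogeneous structure, or reproducing the argument of~\cite{SethVil} in the twisted setting, to be where the real work lies.
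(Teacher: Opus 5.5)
Your main route is the paper's proof. The paper gets the iterated pullback structure over a finite closed trivializing cover from repeated use of Lemma~2.4 of~\cite{Dad2009}, views $A$ as a recursive subhomogeneous algebra over $D$, and concludes by the argument of Proposition~6.5 of~\cite{SethVil}. The point to flag is not the form of the gluing maps, since pullbacks along arbitrary unital homomorphisms are allowed there, but unitality: Definition~3.2 of~\cite{ArBcPh2} assumes $D$ unital, while here $D$ need not be (in the application $D = A \otimes K (L^2 (G))$). So, as the paper does, you must extend the definition to nonunital $D$ and rerun the proof of Proposition~6.5 of~\cite{SethVil} rather than quote it.
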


When $D$ is unital, this is
Corollary~6.6 of \cite{SethVil}.
In our application, $D$ is not unital.

\begin{proof}[Proof of Lemma~\ref{L_6327_CtFld}]
Since $A$ is the section algebra of a locally trivial continuous field over $X$ with fiber $D$, repeated application of Lemma~2.4 of~\cite{Dad2009} shows that $A$ admits the structure of a recursive subhomogeneous algebra over $D$. The usual definition of recursive subhomogeneous algebras over $D$ (Definition~3.2 of~\cite{ArBcPh2}) assumes that $D$ is unital; however, it extends in a straightforward way to the nonunital case, and the resulting algebra still admits an iterated pullback structure of the same form. Moreover, the same argument as in Proposition~6.5 of~\cite{SethVil} shows that $A$ is pure.
\end{proof}

\begin{thm}\label{T_6325_RDim}
Let $\alpha \colon G \to \Aut (A)$ be an action of a compact Lie group
on a simple separable C*-algebra $A$ such that
$\RD (\alpha) < \infty$.
If $A$ is pure, then so is $C^* \bigl( G, \, A, \, \af \bigr)$.
\end{thm}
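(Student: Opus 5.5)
The plan is to combine the structure results just recalled. By Theorem~\ref{2003_rokhlin_dimension}, since $\RD (\alpha) < \infty$ and $G$ is second countable, there are a compact metric space $X$ and a free action $G \curvearrowright X$ such that, with $\gamma$ the diagonal action on $C (X, A)$, the canonical embedding
\[
\rho \colon C^* \bigl( G, \, A, \, \af \bigr) \to C^* \bigl( G, \, C(X,A), \, \gm \bigr)
\]
is sequentially split. By Proposition~\ref{0704_seq_split_comparison_divisibility}, it then suffices to show that $C^* \bigl( G, \, C(X,A), \, \gm \bigr)$ is pure. Both algebras are separable, since $A$ is separable, $X$ is compact metric, and $G$ is second countable, so that proposition applies.

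Since $G$ is a compact Lie group and the action on $X$ is free, Proposition~\ref{2003_structure_cx} identifies $C^* \bigl( G, \, C(X,A), \, \gm \bigr)$ with the section algebra of a locally trivial continuous field over the compact Hausdorff space $X / G$, with fiber $A \otimes K (L^2 (G))$. So I will apply Lemma~\ref{L_6327_CtFld} with $D = A \otimes K(L^2(G))$. This requires checking that $D$ is simple and pure.

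Simplicity is clear because $A$ is simple. For pureness, I will use the identification $\Cu (A \otimes K(L^2(G))) \cong \Cu (A)$. If $L^2 (G)$ is infinite dimensional, then $A \otimes K(L^2(G)) \otimes K \cong A \otimes K$. If $G$ is finite, then $D = M_n (A)$, and stabilizing again gives $A \otimes K$. In either case $\Cu (D) \cong \Cu (A)$ as ordered semigroups, compatibly with $\ll$. Since comparison and almost divisibility (Definitions~\ref{nCompA} and~\ref{D_AlmostDiv_Alg}) are properties of $\Cu$ alone, $D$ is pure. Lemma~\ref{L_6327_CtFld} then gives pureness of the section algebra, and the sequentially split embedding transfers pureness to $C^* \bigl( G, \, A, \, \af \bigr)$.

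I expect the main obstacle to be Lemma~\ref{L_6327_CtFld} itself, in the nonunital setting. Its proof was only sketched: it passes through a recursive subhomogeneous decomposition over $D$ obtained from Lemma~2.4 of~\cite{Dad2009}, followed by the argument of Proposition~6.5 of~\cite{SethVil}. Since that lemma is taken as given, the remaining care points are minor. One is that $X/G$ is compact Hausdorff, which holds because $G$ is compact and $X$ is compact metric. The other is that the transfer along $\rho$ in Proposition~\ref{0704_seq_split_comparison_divisibility} is stated for pureness without unitality assumptions, so the nonunital crossed product poses no problem.
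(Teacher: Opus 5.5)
Your proposal is correct and follows the paper's proof exactly: sequentially split embedding from Theorem~\ref{2003_rokhlin_dimension}, locally trivial field structure from Proposition~\ref{2003_structure_cx}, pureness of the section algebra by Lemma~\ref{L_6327_CtFld}, and transfer back via Proposition~\ref{0704_seq_split_comparison_divisibility}. Your verification that the fiber $A \otimes K(L^2(G))$ is simple and pure, via $\Cu (A \otimes K(L^2(G))) \cong \Cu (A)$, is a detail the paper leaves implicit.
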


\begin{proof}
Let $X$ be the compact metric space
obtained from Theorem~\ref{2003_rokhlin_dimension}.
By Proposition~\ref{2003_structure_cx},
$C^* \bigl( G, \, C(X,A), \, \gm \bigr)$ is a locally trivial
continuous $C (X/G)$-algebra,
each of whose fibers is isomorphic to $A \otimes K(L^2 (G))$,
and hence is pure by Lemma~\ref{L_6327_CtFld}.
$C^* \bigl( G, \, A, \, \af \bigr)$ is thus  pure by
Proposition~\ref{0704_seq_split_comparison_divisibility}.
\end{proof}

\section{Pureness of Crossed Products by Compact Group
 Actions with the Tracial Rokhlin Property}\label{Sec_6326_RTRP}

In this section, we prove that pureness passes to crossed products
by actions of compact groups which have
the restricted tracial Rokhlin property with comparison,
in the sense of Definition~2.1 of~\cite{MohPhil2021TRP}.
We give independent results for comparison and almost divisibility.
We use $\W (A)$ in the proofs since the theorems we cite
are stated for $\W (A)$. The third and fourth author in \cite{MohPhil2021TRP} defined the (restricted) tracial Rokhlin property with comparison for compact group actions on unital simple C*-algebras. Examples that satisfy this definition are in \cite{MohPhil2021TRP2}. We begin with the following definition.

\begin{dfn}\label{traR}
Let $A$ be an infinite dimensional simple unital \ca,
and let $\alpha \colon G \to \Aut (A)$ be
an action of a second countable compact group $G$ on~$A$.
The action $\alpha$ has the
\emph{restricted tracial Rokhlin property with comparison}
if for every finite set $F \subseteq A$,
every finite set $S \subseteq C (G)$,
every $\varepsilon > 0$, every $x \in A_{+} \setminus \{ 0 \}$,
and every $y \in (A^{\alpha})_{+} \setminus \{ 0 \}$,
there exist a projection $p \in A^{\alpha}$ and a
unital completely positive map
$\varphi \colon (C (G), \mathtt{Lt}) \to (p A p, \alpha)$
such that the following hold.
\begin{enumerate}
\item\label{Item_893_FS_equi_cen_multi_approx}
$\varphi$ is an $(S, F, \varepsilon)$-approximately equivariant
central multiplicative map (Definition~1.5 of~\cite{MohPhil2021TRP}).
\item\label{1_pxcompactsets}
$1 - p \precsim_{A} x$.
\item\label{1_pycompactsets}
$1 - p \precsim_{A^{\alpha}} y$.
\item\label{1_ppcompactsets}
$1 - p \precsim_{A^{\alpha}} p$.
\setcounter{TmpEnumi}{\value{enumi}}
\end{enumerate}
\end{dfn}

\begin{thm}\label{ComparisonTRP}
Let $A$ be a stably finite simple separable infinite dimensional
unital C*-algebra,
let $G$ be a second countable compact group, and let
$\alpha \colon G \to \Aut (A)$ be an action
which has the restricted tracial Rokhlin property with comparison.
Let $m \in \mathbb{N}$.
Suppose $A$ has $m$-comparison.
Then $A^{\alpha}$ has $m$-comparison.
\end{thm}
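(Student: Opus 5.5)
The plan is to work in $\W (A^{\af})$ and $\W (A)$. This is legitimate by Proposition~\ref{CP_6325_WA_Cmp}, which says $n$-comparison may be checked there, and the results of~\cite{MohPhil2021TRP} I will use are stated in those terms. I write $[\,\cdot\,]_{A^{\af}}$ and $[\,\cdot\,]_A$ for the classes.

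\textbf{Key tool.} I would use the following transfer principle for the restricted tracial Rokhlin property with comparison.
\begin{itemize}
\item[(T)] If $a, b \in M_{\I} (A^{\af})_{+}$ satisfy $a \precsim_A b$, and $y \in (A^{\af})_{+} \SM \{ 0 \}$, then $(a - \ep)_{+} \precsim_{A^{\af}} b \oplus y$ for every $\ep > 0$.
\end{itemize}
I expect (T) either to be in~\cite{MohPhil2021TRP} or to be provable as follows.
\begin{itemize}
\item Given $v$ with $\| v b v^* - (a - \ep/2)_{+} \|$ small, apply Definition~\ref{traR} with $F$ containing the matrix entries of $v$, $a$ and~$b$, and with the given~$y$.
\item Use the approximately equivariant central multiplicative map $\ph \colon C (G) \to p A p$ to replace $p v p$ by an approximately $\af$-invariant element; integrating over~$G$ then yields an element of $A^{\af}$ implementing $p (a - \ep)_{+} p \precsim_{A^{\af}} b$.
\item Control the corner $1 - p$ by condition~(\ref{1_pycompactsets}), $1 - p \precsim_{A^{\af}} y$.
\end{itemize}
This step is the \textbf{main obstacle}. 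In particular one must check that cutting by~$p$ costs only an $\ep$-perturbation in the Cuntz sense. I would first try to extract (T) directly from the proof that $A^{\af}$ inherits strict comparison in~\cite{MohPhil2021TRP}. I also need that $A^{\af}$ is simple, stably finite, unital and infinite dimensional (from~\cite{MohPhil2021TRP}). Then $A^{\af}$ is not of Type~I, so Lemmas~\ref{L_3618_Sp01} and~\ref{L_5Z26_InterpPj} and Lemma~2.6 of~\cite{Phl40} apply inside $A^{\af}$.

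\textbf{Setup and margin.} Let $a, b_0, \ldots, b_m \in M_{\I} (A^{\af})_{+}$ with $(k_j + 1) [a]_{A^{\af}} \leq k_j [b_j]_{A^{\af}}$, and let $\ep > 0$. Choose $\dt > 0$ with $(k_j + 1) [(a - \ep)_{+}]_{A^{\af}} \leq k_j [(b_j - \dt)_{+}]_{A^{\af}}$ for all~$j$. I then need a nonzero $y \in (A^{\af})_{+}$ such that $(b_0 - \dt)_{+} \oplus y \precsim_{A^{\af}} b_0$; the case split is as in Theorem~\ref{T_6314_CompThm}.
\begin{itemize}
\item If $0$ is a limit point of $\spec (b_0)$, shrink $\dt$ so that $\spec (b_0) \cap (\rh, \dt) \neq \E$ for some $\rh < \dt$. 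Take $y = f (b_0)$ with $\supp (f) \S (\rh, \dt)$; then $y$ is nonzero and orthogonal to $(b_0 - \dt)_{+}$.
\item If $[b_0]$ is the class of a projection, use Lemma~\ref{L_5Z26_InterpPj} inside $A^{\af}$. It replaces $b_0$ by $r_0 \oplus s_0$ with $2 k_0 [s_0] \leq [(r_0 - \dt)_{+}]$, so $b_0$ can be traded for $r_0$, which leaves room for a piece $y \precsim s_0$. The price is replacing $k_0$ by $2 k_0 + 1$, as in the earlier proof.
\end{itemize}

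\textbf{Comparison step.} Pass to~$A$: there $(k_j + 1) [(a - \ep)_{+}]_A \leq k_j [(b_j - \dt)_{+}]_A$, using the modified elements for $j = 0$. By $m$-comparison of~$A$,
\[
(a - \ep)_{+} \precsim_A \bigoplus_{j = 0}^m (b_j - \dt)_{+}.
\]
Applying (T) with the element $y$ chosen above gives
\[
(a - 2 \ep)_{+} \precsim_{A^{\af}} \bigoplus_{j = 0}^m (b_j - \dt)_{+} \oplus y \precsim_{A^{\af}} \bigoplus_{j = 0}^m b_j .
\]
Since $\ep > 0$ is arbitrary, $[a]_{A^{\af}} \leq \sum_{j = 0}^m [b_j]_{A^{\af}}$, which is $m$-comparison for $\W (A^{\af})$ and hence for $\Cu (A^{\af})$.
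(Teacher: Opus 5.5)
Your proof is correct, but it is organized around a different transfer statement than the paper's. The paper treats Proposition~4.10 of~\cite{MohPhil2021TRP} as a black box: for elements of $\Mi(A^{\af})_+$ whose spectra have $0$ as a limit point, it upgrades $\precsim_A$ to $\precsim_{A^{\af}}$. To be able to apply it, the paper does three things, mirroring Theorem~\ref{T_6314_CompThm}:
\begin{enumerate}
\item it replaces $x$ by $z = (x-\ep)_+ + c$ with $\spec(c) = [0,1]$;
\item it replaces \emph{every} right-hand side of projection class using Lemma~\ref{L_5Z26_InterpPj};
\item it handles the case where $\cc{x}_{A^{\af}}$ is a projection class separately, through Lemma~\ref{L_3618_Lg36}.
\end{enumerate}
Your margin statement (T) is the more primitive mechanism behind such a transfer result. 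Because it costs only an $\ep$ on the left and one arbitrary nonzero $y$ on the right, you need no case distinction on $a$, and you only have to make room inside a single $b_0$; your argument is therefore shorter.

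What you must supply is a proof of (T), and your integration-over-$G$ sketch is the vague part. It is quicker to derive (T) from Lemma~\ref{L_6327_TRP_Mn}, that is, from Theorem~2.14 of~\cite{MohPhil2021TRP}. Take $a, b \in M_N(A^{\af})_+$ and choose $v$ with $\|v b v^* - a\|$ small. Apply that lemma with $F_1 = \{v, v^*\}$, $F_2 = \{a, b\}$ and your $y$. You get $\|\ps(v)\, pbp\, \ps(v)^* - pap\| < \ep$ with $\ps(v) \in M_N(A^{\af})$, so $(pap - \ep)_+ \precsim_{A^{\af}} b$. Lemma~1.8 of~\cite{Phl40} then gives $(a - \ep)_+ \precsim_{A^{\af}} (pap - \ep)_+ \oplus (1 - p)$, and the lemma also gives $1 - p \precsim_{A^{\af}} y$. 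This is the same computation the paper carries out in the proof of Theorem~\ref{Alm_Div_TRP}. Since Lemma~\ref{L_6327_TRP_Mn} allows $y \in M_N(A^{\af})_+$, this also covers the small point that your $y = f(b_0)$ lies in $\Mi(A^{\af})$ rather than in $A^{\af}$ as (T) was stated.
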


\begin{proof}
The proof is similar to the proof that $m$-comparison passes from
a simple \ca{} to a large subalgebra, part of Theorem~\ref{T_6314_CompThm}.
By Proposition~\ref{CP_6325_WA_Cmp}, it suffices to prove
$m$-comparison in $\W (A^{\alpha})$.

Let $x, a_0, a_1, \ldots, a_m \in (M_{\infty} (A^{\alpha}))_{+}$ satisfy
$\cc{x}_{A^{\alpha}} \les \cc{a_j}_{A^{\alpha}}$ for $j = 0, 1, \ldots, m$.
\Wolog{} $x \neq 0$.
By the definition of $\les$ (Definition~\ref{LeqS}),
for $j = 0, 1, \ldots, m$ there is $k_j \in \N$
such that $(k_j + 1) \ccAf{x} \leq k_j \ccAf{a_j}$.
In particular, $a_j \neq 0$ for $j = 0, 1, \ldots, m$.

First assume $\cc{x}_{A^{\alpha}}$ is not the class of a projection.
Let $\varepsilon > 0$ be arbitrary.
We will find $z \in (M_{\infty} (A^{\alpha}))_{+}$
and, for $j = 0, 1, \ldots, m$,
we will find $b_j \in (M_{\infty} (A^{\alpha}))_{+}$
such that $0$ is a limit point of $\spec (z)$,
$(x - \ep)_{+} \precsim_{A^{\alpha}} z$,
and, for $j = 0, 1, \ldots, m$, $\ccA{z} \les \ccA{b_j}$,
$0$ is a limit point of $\spec (b_j)$, and $\ccAf{b_j} \leq \ccAf{a_j}$.

Choose $\rh \in (0, \ep)$ such that $\spec (x) \cap (\rh, \ep) \neq \E$.
Choose $\ld \in \spec (x) \cap (\rh, \ep)$.
Choose a \cfn{} $f \colon [0, \I) \to [0, 1]$ such that
$\supp (f) \S (\rh, \ep)$ and $f (\ld) \neq 0$.
The algebra $A^{\alpha}$ is simple
by Theorem~3.1 of~\cite{MohPhil2021TRP} and not of Type~I
by Proposition~3.2 of~\cite{MohPhil2021TRP}.
Therefore Lemma~\ref{L_3618_Sp01} provides
$c \in \bigl( {\overline{f (x) (M_{\infty} (A^{\alpha})) f (x)}} \bigr)_{+}$
such that $\spec (c) = [0, 1]$.
Set $z = (x - \ep)_{+} + c$.
Then $0$ is a limit point of $\spec (z)$ and
$(x - \ep)_{+} \precsim_{A^{\alpha}} z \precsim_{A^{\alpha}} x$.

Define
\[
I = \bigl\{ j \in \{0, 1, \ldots, m\} \colon
 {\mbox{$\cc{a_j}_{A^{\alpha}}$
    is the class of a nonzero projection}} \bigr\}.
\]

For $j \not\in I$, set $b_j = a_j$.
Then $0$ is a limit point of $\spec (b_j)$.
Also,
\[
\ccAf{z} \leq \ccAf{x}
\andeqn
(k_j + 1) \ccAf{x} \leq k_j \ccAf{b_j}.
\]
The second part says that $\cc{z}_{A^{\af}} \les \cc{b_j}_{A^{\af}}$.

Now suppose $j \in I$.
We may as well assume that $a_j$ is a \pj.
Use Lemma~\ref{L_5Z26_InterpPj} to choose
$b_j, s_j \in (a_j (M_{\infty} (A^{\alpha})) a_j)_{+}$ and $\dt \in (0, 1)$
such that
\[
\spec (b_j) = \spec (s_j) = [0, 1],
\qquad
b_j \precsim_{A^{\alpha}} a_j,
\]
\[
a_j \precsim_{A^{\alpha}} (b_j - \dt)_{+} \oplus (s_j - \dt)_{+},
\qquad {\mbox{and}} \qquad
2 k_j \ccAf{s_j} \leq \ccAf{(b_j - \dt)_{+}}.
\]
Then
\[
\begin{split}
(2 k_j + 2) \ccAf{z}
& \leq (2 k_j + 2) \ccAf{x}
\\
& \leq 2 k_j \ccAf{a_j}
  \leq 2 k_j \ccAf{b_j} + 2 k_j \ccAf{s_j}
  \leq (2 k_j + 1) \ccAf{b_j}.
\end{split}
\]
Thus, here too $\ccAf{z} \les \ccAf{b_j}$,
$0$ is a limit point of $\spec (b_j)$,
and $\ccA{b_j} \leq \ccA{a_j}$ because $\ccAf{b_j} \leq \ccAf{a_j}$.

Passing to~$A$, we have $\ccA{z} \les \ccA{b_j}$ for $j = 0, 1, \ldots, m$.
Since $A$ has $m$-comparison, it follows that
$\ccA{z} \leq \sum_{j = 0}^m \ccA{b_j}$.
Since $0$ is a limit point of $\spec (z)$
and of $\spec (b_j)$ for $j = 0, 1, \ldots, m$,
Proposition~4.10 of~\cite{MohPhil2021TRP} implies that
$\ccAf{z} \leq \sum_{j = 0}^m \ccAf{b_j}$.
Therefore
\[
\ccAf{(x - \ep)_{+}}
 \leq \ccAf{z}
 \leq \sum_{j = 0}^m \ccAf{b_j}
 \leq \sum_{j = 0}^m \ccAf{a_j}.
\]

We have proved that for all $\ep > 0$ we have
\[
\ccAf{(x - \ep)_{+}} \leq \sum_{j = 0}^m \ccAf{a_j}.
\]
So $\ccAf{x} \leq \sum_{j = 0}^m \ccAf{a_j}$.
This completes the case that $\ccAf{x}$ is not the class of a \pj.

Now assume that $\cc{x}_{A^{\alpha}}$ is the class of a nonzero
projection in $M_{\infty} (A^{\alpha})$. By Theorem 3.1 of
\cite{MohPhil2021TRP}, $A^{\alpha}$ is simple. Now, use Lemma 2.6 of \cite{Phl40} to choose
$b \in (M_{\infty} (A^{\alpha}))_{+} \setminus \{0\}$ such that
$\cc{b}_{A^{\alpha}} \leq \cc{a_j}_{A^{\alpha}}$ for
$j = 0, 1, \hdots, m$.
Set $k = \max ( k_0, k_1, \ldots, k_m )$.
By Lemma~\ref{L_3618_Lg36}, there exist $\mu, \eta \in \W (A^{\alpha})$,
both not the classes of \pj{s},
such that
\[
\mu \leq \cc{x}_{A^{\alpha}} \leq \mu + \eta
\andeqn
(2 k + 2) \eta \leq \cc{b}_{A^{\alpha}}.
\]
For $j = 0, 1, \hdots, m$, we then have
\[
\begin{split}
(2 k_j + 2) (\mu + \eta)
& \leq (2 k_j + 2) \cc{x}_{A^{\alpha}} + (2 k_j + 2) \eta
\\
& \leq 2 k_j \cc{a_j}_{A^{\alpha}} + \cc{b}_{A^{\alpha}}
  \leq 2 k_j \cc{a_j}_{A^{\alpha}} + \cc{a_j}_{A^{\alpha}}
  = (2 k_j + 1) \cc{a_j}_{A^{\alpha}}.
\end{split}
\]
Hence $\mu + \eta \les \cc{a_j}_{A^{\alpha}}$.
Since $\mu + \eta$ is not the class of a \pj,
the previous case yields the second step in the calculation
\[
\cc{x}_{A^{\alpha}} \leq \mu + \eta \leq \sum_{j = 0}^m \cc{b_j}_{A^{\alpha}}.
\]
This completes the proof that if $A$ has
$m$-comparison, then so does $A^{\alpha}$.
\end{proof}

Proposition~4.19 of~\cite{MohPhil2021TRP} says that if $\af$
has the restricted tracial Rokhlin property with comparison,
then so does $g \mapsto \id_{M_N} \otimes \af_g$ for any~$N$.
We need a more careful version of a consequence of that statement,
namely the matrix version of Theorem~2.14 of~\cite{MohPhil2021TRP}.
Its role in the next proof is to avoid rather awkward notation.
We will not need all parts of the conclusion.
(We give a nearly full list for  possible use elsewhere.)
In our application, $x$ and~$y$ will be in $A$ and $A^{\af}$,
embedded in $M_N (A)$ and $M_N (A^{\alpha})$ as the upper left corners.

To prove the full version, including the condition
$\| \ps (a) \| > \| a \| - \ep$ for all $a \in F_{1} \cup F_{2}$
(omitted below), the strategy is as follows.
The proof of Proposition~4.19 of~\cite{MohPhil2021TRP}
gives a map of the form $\id_{M_N} \otimes \ph_0$.
Use this map in the proof of Theorem~2.14 of~\cite{MohPhil2021TRP}.
It is awkward to write a careful proof this way,
so we give instead the version omitting the condition above,
and for which it is easy to write a proof using the conclusion of
Theorem~2.14 of~\cite{MohPhil2021TRP}.
We do not need the omitted condition.

\begin{lem}\label{L_6327_TRP_Mn}
Let $A$ be an infinite dimensional simple separable unital \ca,
let $G$ be a second countable compact group, and let
$\alpha \colon G \to \Aut (A)$ be an action which has the
restricted tracial Rokhlin property with comparison.
Let $N \in \N$.
For every $\ep > 0$, every $n \in \N$,
every compact subset $F_{1} \subseteq M_N (A)$,
every compact subset $F_2 \subseteq M_N (A^{\alpha})$,
every $x \in M_N (A)_{+} \setminus \{ 0 \}$,
and every $y \in M_N (A^{\alpha})_{+} \setminus \{ 0 \}$, there exist
a projection $p_0 \in A^{\alpha}$ and a
unital completely positive contractive map
$\psi_0 \colon A \to p_0 A^{\alpha} p_0$ such that,
taking $p = 1_{M_N} \otimes p_0 \in M_N (A^{\alpha})$
and $\ps = \id_{M_N} \otimes \psi_0 \colon M_N (A) \to p M_N (A^{\alpha}) p$,
the following hold:
\begin{enumerate}
\item\label{Item_1X07_18}
$\ps$ is an $(n, F_1 \cup F_2, \ep)$-approximately
multiplicative map in the sense of Definition~1.5 of~\cite{MohPhil2021TRP}.
\item\label{commute1469}
$\| p a - a p \| < \varepsilon$
for all $a \in F_{1} \cup F_{2}$.
\item\label{Item_1X07_21}
$\| \psi (a) - p a p \| < \ep$
for all $a \in F_2$.
\item\label{Item_1X07_19}
$1_{M_N (A)} - p \precsim_{A} x$.
\item\label{Item_1X07_20}
$1_{M_N (A)} - p \precsim_{A^{\alpha}} y$.
\item\label{Item_1X07_20p}
$1_{M_N (A)} - p \precsim_{A^{\alpha}} p$.
\end{enumerate}
\end{lem}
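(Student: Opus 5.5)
We plan to deduce the lemma from the non-matricial version, Theorem~2.14 of~\cite{MohPhil2021TRP}, applied to $A$ itself with suitably enlarged finite data. We then amplify the resulting map by $\id_{M_N}$. Write elements of $M_N (A)$ as matrices $a = (a_{i j})$. Let $F_1' \subseteq A$ and $F_2' \subseteq A^{\af}$ be the compact sets of all matrix entries of elements of $F_1$ and $F_2$, respectively. Put $\ep' = \ep / N^2$, or smaller if the approximate multiplicativity estimate requires it. For the Cuntz conditions we replace $x$ and $y$ by nonzero elements $x_0 \in A_{+}$ and $y_0 \in (A^{\af})_{+}$ which are much smaller than $x$ and $y$.

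Concretely, we proceed as follows. Since $A$ is simple, Lemma~2.6 of~\cite{Phl40}, applied to $M_N (A)$ (also simple), gives nonzero $x_1 \in A_{+}$ with $x_1 \precsim_A x$. Lemma~2.4 of~\cite{Phl40} then gives $N$ mutually orthogonal Cuntz equivalent nonzero positive elements in $\overline{x_1 A x_1}$; let $x_0$ be one of them, so that $1_{M_N} \otimes x_0 \precsim_A x_1 \precsim_A x$. We do the same in $A^{\af}$, which is simple and not of Type~I by Theorem~3.1 and Proposition~3.2 of~\cite{MohPhil2021TRP}, to get $y_0 \in (A^{\af})_{+} \setminus \{0\}$ with $1_{M_N} \otimes y_0 \precsim_{A^{\af}} y$.

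Now apply Theorem~2.14 of~\cite{MohPhil2021TRP} with $\ep'$, $n$, $F_1'$, $F_2'$, $x_0$, $y_0$. This yields a projection $p_0 \in A^{\af}$ and a unital completely positive contractive map $\ps_0 \colon A \to p_0 A^{\af} p_0$ satisfying the following:
\begin{itemize}
\item $\ps_0$ is $(n, F_1' \cup F_2', \ep')$-approximately multiplicative;
\item $p_0$ approximately commutes with $F_1' \cup F_2'$;
\item $\ps_0 (a) \approx p_0 a p_0$ for $a \in F_2'$;
\item $1 - p_0 \precsim_A x_0$, $1 - p_0 \precsim_{A^{\af}} y_0$, and $1 - p_0 \precsim_{A^{\af}} p_0$.
\end{itemize}
Define $p = 1_{M_N} \otimes p_0$ and $\ps = \id_{M_N} \otimes \ps_0$.

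We then check each condition of the lemma.
\begin{itemize}
\item Conditions (\ref{commute1469}) and (\ref{Item_1X07_21}) are entrywise: each matrix entry is off by less than $\ep'$, so the norm is off by at most $N^2 \ep' \le \ep$.
\item Condition (\ref{Item_1X07_19}) follows from $1_{M_N (A)} - p = 1_{M_N} \otimes (1 - p_0) \precsim_A 1_{M_N} \otimes x_0 \precsim_A x$. Condition (\ref{Item_1X07_20}) is analogous.
\item Condition (\ref{Item_1X07_20p}) follows from $1 - p_0 \precsim_{A^{\af}} p_0$ by tensoring with $1_{M_N}$.
\end{itemize}

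The main obstacle is condition (\ref{Item_1X07_18}). We must check Definition~1.5 of~\cite{MohPhil2021TRP}, which presumably asks that products of up to $n$ elements of $F_1 \cup F_2$ be approximately preserved. A product of $n$ matrices expands entrywise into at most $N^{n-1}$ terms, each a product of $n$ entries from $F_1' \cup F_2'$. So we choose $\ep' \le \ep / (N^{n+1})$ and use that $\ps_0$ is $(n, F_1' \cup F_2', \ep')$-approximately multiplicative; since $\ps$ acts entrywise, the error bound follows. If Definition~1.5 is formulated differently, for instance with all words of length at most $n$, we simply enlarge $F_1'$ to include all entries of the relevant products and shrink $\ep'$ accordingly. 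Only finitely many constraints arise, because the sets are compact and we may pass to finite $\ep'$-nets first.
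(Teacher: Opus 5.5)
Your proposal is correct and follows the paper's proof essentially step for step. You pass to the compact sets of matrix entries, shrink $x$ and $y$ to nonzero $x_0 \in A_+$ and $y_0 \in (A^{\alpha})_+$ with $1_{M_N} \otimes x_0 \precsim_A x$ and $1_{M_N} \otimes y_0 \precsim_{A^{\alpha}} y$, apply Theorem~2.14 of~\cite{MohPhil2021TRP} with tolerance about $\ep / N^{n+1}$, and check each condition entrywise using the count of $N^{m-1}$ terms per entry of a length-$m$ product; the only cosmetic difference is that you get the nonzero element of $A_+$ below $x$ from Lemma~2.6 of~\cite{Phl40} instead of Lemma~3.8 of~\cite{PhlT1}, which works equally well.
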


\begin{proof}
We will systematically write elements $b \in M_n (A)$
as matrices $b = (b_{j, k})_{j, k = 1}^N$.
We also use standard matrix unit notation, so that
$(b_{j, k})_{j, k = 1}^N = \sum_{j, k = 1}^N e_{j, k} \otimes b_{j, k}$.
Define compact subsets $S_1 \S A$ and $S_2 \S A^{\af}$ by
\[
S_{\nu} = \bigl\{ a_{j, k} \colon
 {\mbox{$a \in F_{\nu}$ and $j, k \in \{ 1, 2, \ldots, N \}$}} \bigr\}
\]
for $\nu = 1, 2$.
Following the argument in the proof of
Proposition~4.19 of~\cite{MohPhil2021TRP}, use Lemma~3.8 of~\cite{PhlT1},
then Theorem 3.1 and Proposition 3.2 of \cite{MohPhil2021TRP}
and Lemma~2.4 of~\cite{Phl40},
to get $x_0 \in A_{+} \setminus \{ 0 \}$
and $y_0 \in (A^{\alpha})_{+} \setminus \{ 0 \}$ such that
\begin{equation}\label{Eq_6410_Multiple}
1_{M_n} \otimes x_0 \precsim_{A} x
\qquad {\mbox{and}} \qquad
1_{M_n} \otimes y_0 \precsim_{A^{\alpha}} y.
\end{equation}
Further set
\[
\ep_0 = \frac{\ep}{N^{n + 1} + 1}.
\]
Apply Theorem~2.14 of~\cite{MohPhil2021TRP}
with $\ep_0$ in place of~$\ep$,
with $n$ as given,
with $S_1$ and $S_2$ in place of $F_1$ and~$F_2$,
and with $x_0$ and $y_0$ in place of $x$ and~$y$.
Let $p_0 \in A^{\alpha}$ and $\ps_0 \colon A \to p_0 A^{\alpha} p_0$
be the resulting \pj{} and $(n, S_1 \cup S_2, \ep_0)$-approximately
multiplicative unital completely positive contractive map.
Define $p = 1_{M_N} \otimes p_0$ and $\ps = \id_{M_N} \otimes \psi_0$.

We verify the conclusions of the lemma.
We start with~(\ref{Item_1X07_18}).
An argument by induction on $m$ shows that every
product $b = a_1 a_2 \cdots a_m$
with $a_1, a_2, \ldots, a_m \in F_1 \cup F_2$ has the form
$(b_{j, k})_{j, k = 1}^N$ in which, for $j, k = 1, 2, \ldots, N$,
the element $b_{j, k}$ is a sum of $N^{m - 1}$ products
of length~$m$ of elements of $S_1 \cup S_2$.
The matrix entries of $c = \ps (a_1) \ps (a_2) \cdots \ps (a_m)$
are sums of the same products, except that
each element $z \in S_1 \cup S_2$ is replaced with $\ps_0 (z)$.
When $m \in \{ 1, 2, \ldots, n \}$, the approximate multiplicativity
condition on $\ps_0$, used at the second step, implies that
\[
\begin{split}
\| (\ps (b) - c)_{j, k} \|
& = \| \ps_0 (b_{j, k}) - c_{j, k} \|
\\
& \leq N^{m - 1}
   \sup_{z_1, z_2, \ldots, z_m \in S_1 \cup S_2}
      \bigl\| \ps_0 (z_1 z_2 \cdots z_m)
            - \ps_0 (z_1) \ps_0 (z_2) \cdots \ps_0 (z_m) \bigr\|
\\
& \leq N^{m - 1} \ep_0.
\end{split}
\]
Therefore
\[
\| \ps (b) - c \| \leq N^2 \cdot N^{m - 1} \ep_0 < \ep.
\]
This proves~(\ref{Item_1X07_18}).

For~(\ref{commute1469}),
let $a = (a_{j, k})_{j, k = 1}^N \in F_1 \cup F_2$.
Then $a_{j, k} \in S_1 \cup S_2$ for $j, k = 1, 2, \ldots, N$.
So $p a - a p = \bigl( p_0 a_{j, k} - a_{j, k} p_0 \bigr)_{j, k = 1}^N$,
whence
\[
\| p a - a p \|
 \leq \sum_{j, k = 1}^N \| p_0 a_{j, k} - a_{j, k} p_0 \|
 < N^2 \ep_0 \leq \ep.
\]
Part~(\ref{Item_1X07_21}) is similar:
if $a = (a_{j, k})_{j, k = 1}^N \in F_2$, then $a_{j, k} \in S_2$
for all $j, k$, so
\[
\| \psi (a) - p a p \|
  \leq \sum_{j, k = 1}^N \| \ps_0 (a_{j, k} ) - p_0 a_{j, k} p_0 \|
   < N^2 \ep_0 \leq \ep.
\]

For~(\ref{Item_1X07_19}), we have, using~(\ref{Eq_6410_Multiple})
at the last step,
\[
\ccA{1_{M_N (A)} - p}
 = N \ccA{1 - p_0}
 \leq N \ccA{x_0}
 \leq \ccA{x},
\]
and (\ref{Item_1X07_20}) is proved similarly.
Condition~(\ref{Item_1X07_20p}) is immediate from
$1 - p_0 \precsim_{A^{\alpha}} p_0$.
\end{proof}

\begin{thm}\label{Alm_Div_TRP}
Let $A$ be a stably finite simple separable infinite dimensional
unital C*-algebra,
let $G$ be a second countable compact group,
and let $\alpha \colon G \to \Aut (A)$ be an action
which has the restricted tracial Rokhlin property with comparison
but does not have the Rokhlin property.
Let $m \in \mathbb{N}$.
Suppose $A$ is $m$-almost divisible.
Then $A^{\alpha}$ is $(2 m + 1)$-almost divisible.
\end{thm}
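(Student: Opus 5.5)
The plan is to follow the proof of Proposition~\ref{P_5Z28_Large_alm_dv}, with the inclusion $A^{\alpha} \S A$ in place of $B \S A$. The isomorphism of Theorem~\ref{T_6318_Isom_CuPl} will be replaced by two tools. The first is Proposition~4.10 of~\cite{MohPhil2021TRP}, which transfers subequivalences between purely positive elements of $M_{\infty} (A^{\alpha})$ from $A$ back to $A^{\alpha}$; it was already used in Theorem~\ref{ComparisonTRP}. The second is the map $\ps = \id_{M_N} \otimes \ps_0$ of Lemma~\ref{L_6327_TRP_Mn}, which moves elements of $M_N (A)$ into $p M_N (A^{\alpha}) p$ at the cost of a small corner $1 - p$. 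I will work in $\W (A^{\alpha})$ via Proposition~\ref{P_6324_WDiv}. So I fix $a \in M_N (A^{\alpha})_{+}$, $k \in \N$ and $\ep > 0$, and seek $d \in M_{\infty} (A^{\alpha})_{+}$ with $k [d] \leq [a]$ and $[(a - \ep)_{+}] \leq (k + 1)(2m + 2) [d]$ in $\W (A^{\alpha})$.

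\emph{Purely positive case.} Assume first that $0$ is a limit point of $\spec (a)$. By $m$-almost divisibility of $A$, applied to $\ccA{(a - \ep/2)_{+}} \ll \ccA{(a - \ep/4)_{+}}$, I get $c \in M_{N'} (A)_{+}$ with
\[
k \ccA{c} \leq \ccA{(a - \ep/4)_{+}} \andeqn \ccA{(a - \ep/2)_{+}} \leq (k + 1)(m + 1) \ccA{c}.
\]
I enlarge $N$ so that $c$ and the implementing elements $v, w$ of these two subequivalences live in $M_N (A)$. I then put $c, v, w$ into $F_1$ and $a$ into $F_2$, and apply Lemma~\ref{L_6327_TRP_Mn} with suitable $x, y$ chosen below. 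Approximate multiplicativity of $\ps$, together with $\ps (a) \approx p a p$, turns the two relations into analogous relations in $M_{\infty} (A^{\alpha})$ for $d = (\ps (c) - \dt)_{+}$, with $p a p$ in place of $a$, up to cutdowns. This gives $k [d] \leq [p a p] \leq [a]$. It also gives $[(p a p - \ep')_{+}] \leq (k + 1)(m + 1) [d]$.

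\emph{The corner $1 - p$.} By (\ref{commute1469}) and standard cutdown estimates, $(a - \ep)_{+} \precsim p a p \oplus (1 - p) a (1 - p)$ approximately, and the second summand is $\precsim 1 - p$. I must bound $1 - p$ by $(k + 1)(m + 1) [d]$. This is the source of the doubling from $m + 1$ to $2m + 2$, which gives $(2m + 1)$-almost divisibility. This is the main obstacle, because $d$ is only known after $p$ is chosen. I would try to break the circularity as follows. Choose $y \in (A^{\alpha})_{+} \setminus \{ 0 \}$ in advance, inside the hereditary subalgebra generated by a spectral piece of $a$ near level $\ep/2$, as in the proof of Theorem~\ref{ComparisonTRP}. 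Choose it so that $1 - p \precsim_{A^{\alpha}} y$, and so that, in $A$, $\ccA{y}$ is dominated by a piece already counted in $(k + 1)(m + 1) \ccA{c}$. Then I would verify the final inequality in $A$ on purely positive classes, namely
\[
\ccA{(a - \ep)_{+}} \leq (k + 1)(m + 1) \ccA{d} + (k + 1)(m + 1) \ccA{d},
\]
and pull it back to $A^{\alpha}$ by Proposition~4.10 of~\cite{MohPhil2021TRP}. The hypothesis that $\alpha$ does not have the Rokhlin property enters here: it should guarantee that $1 - p \neq 0$ can be arranged, or that $A^{\alpha}$ has enough purely positive elements. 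That is, it should ensure that the relevant classes avoid projections, so that Proposition~4.10 applies. If the direct argument fails, I would instead use (\ref{Item_1X07_20p}), namely $1 - p \precsim p$, combined with $p$ being dominated by the transported $c$-classes.

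\emph{Projection case.} If $[a]$ is the class of a nonzero projection, I proceed exactly as in the second half of the proof of Proposition~\ref{P_5Z28_Large_alm_dv}, using the case $N = 1$ of Lemma~\ref{L_5Z26_InterpPj} in $A^{\alpha}$. This lemma applies because $A^{\alpha}$ is simple and not of Type~I, by Theorem~3.1 and Proposition~3.2 of~\cite{MohPhil2021TRP}. It replaces $a$ by a purely positive element $x$ with $[a] \leq 2 [(x - \ep)_{+}]$, and the previous case is applied to $x$. Since the loss in that step must be absorbed, I would run the purely positive case with $2k$ in place of $k$, as in Proposition~\ref{P_5Z28_Large_alm_dv_C}, so that the final constant stays at $(k + 1)(2m + 2)$.
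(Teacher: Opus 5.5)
Your setup matches the paper's proof. That covers working in $\W (A^{\alpha})$ via Proposition~\ref{P_6324_WDiv}, getting $c$ from $m$-almost divisibility of~$A$, transporting both relations into $A^{\alpha}$ with $\ps = \id_{M_N} \otimes \ps_0$ from Lemma~\ref{L_6327_TRP_Mn} (with $p a p$ in place of $a$), and treating projections via Lemma~\ref{L_5Z26_InterpPj}.

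\textbf{The gap.} The step you flag, absorbing the corner $1 - p$, is a genuine gap, and none of your proposed fixes closes it.
\begin{itemize}
\item The element $d = (\ps (c) - \dt)_{+}$ exists only after $p$ is chosen. All you know about it is the transported inequality $[(p (a - \ep/2)_{+} p - \ep')_{+}] \leq (k + 1)(m + 1) [d]$.
\item The lemma controls $\ps (z) - p z p$ only for $z \in F_2 \S M_N (A^{\alpha})$, not for $z = c$. So there is no relation between $\ccA{d}$ and $\ccA{c}$. Knowing that $y$ is dominated by a piece counted in $(k + 1)(m + 1) \ccA{c}$ therefore says nothing about $(k + 1)(m + 1) [d]$.
\item Dominating a spectral piece of $a$ by the transported class is circular. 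The only way from $a$ down to $p a p$ is $a \precsim p a p \oplus (1 - p)$, which reintroduces $1 - p$.
\item Passing to $A$ and using Proposition~4.10 of~\cite{MohPhil2021TRP} does not help. The condition $1 - p \precsim_A x$ has the same circularity.
\item The fallback $1 - p \precsim p$ is useless. The projection $p$ is close to $1$ and is not dominated by anything of the size of $(a - \ep)_{+}$.
\end{itemize}

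\textbf{The missing idea.} Do not bound $1 - p$ by multiples of the transported class; enlarge the divisor instead.
\begin{itemize}
\item Before applying the lemma, use that $0$ is a limit point of $\spec (a)$. Choose $h$ with $\supp (h) \S (\rh, \ep/3)$ and $h (\ld) \neq 0$ for some $\ld \in \spec (a)$. This window lies strictly below the cutoff in the first relation $k \ccA{c} \leq \ccA{(a - \ep/3)_{+}}$.
\item By Lemma~2.4 of~\cite{Phl40} in the simple non-Type~I algebra $A^{\alpha}$, get $t \in \overline{h (a) A^{\alpha} h (a)}_{+} \setminus \{ 0 \}$ with $k [t] \leq [h (a)]$.
\item Feed $t$ into Lemma~\ref{L_6327_TRP_Mn} as the element $y$, so that $1 - p \precsim_{A^{\alpha}} t$. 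Set $b = (\ps (c) - 2 \dt)_{+} \oplus t$, and write $b_0 = (\ps (c) - 2 \dt)_{+}$.
\item Then $k [b] \leq [(a - \ep/3)_{+}] + [h (a)] \leq [a]$, because $h (a) \perp (a - \ep/3)_{+}$.
\item Also $(a - \ep)_{+} \precsim (p (a - 2 \ep/3)_{+} p - \ep/3)_{+} \oplus (1 - p) \precsim 1_N \otimes b_0 \oplus t \precsim 1_N \otimes b$, with $N = (k + 1)(m + 1)$.
\end{itemize}
Your window ``near $\ep/2$'' would not work for this. It lies inside $(a - \ep/4)_{+}$, which already carries $k [c]$, so adding $t$ there breaks the first inequality.

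\textbf{The constants.} With this fix the purely positive case loses nothing, and the factor $2$ comes only from the projection case, where $[a] \leq 2 [(x - \ep)_{+}]$. Your constant bookkeeping does not work otherwise. If the purely positive case already cost a factor $2$, the projection case would give $(k + 1)(4 m + 4)$. Running that case with $2 k$ in place of $k$ does not repair this, because it trades the factor $k + 1$ for $2 k + 1$. You would only reach $(4 m + 3)$-almost divisibility, not the stated $(2 m + 1)$.
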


Unlike the other proofs of similar statements in this paper,
for this proof we seem to need to use more specific consequences
of the restricted tracial Rokhlin property with comparison.

\begin{proof}[Proof of Theorem~\ref{Alm_Div_TRP}]
We verify the criterion of Proposition~\ref{P_6324_WDiv}.
Thus, let $n \in \N$, let $a \in M_{n} (A^{\af})_+$,
let $k \in \Nz$, and let $\ep > 0$.
Using the notation of Proposition~\ref{P_6324_WDiv}, we must find
$b \in M_{\I} (A^{\alpha})_+$
such that in $\W (A^{\alpha})$ we have
\begin{equation}\label{Eq_6325_Goal}
k \cc{b}_{A^{\alpha}} \leq \cc{a}_{A^{\alpha}}
\andeqn
\cc{(a - \ep)_+}_{A^{\alpha}} \leq (k + 1) (2 m + 2) \cc{b}_{A^{\alpha}}.
\end{equation}
By Proposition~4.19 of~\cite{MohPhil2021TRP},
the action $g \mapsto \id_{M_n} \otimes \af_g$ of $G$ on $M_{n} \otimes A$
also has the restricted tracial Rokhlin property with comparison.
Therefore we may assume $n = 1$.
We may also clearly assume that $(a - \ep)_+ \neq 0$.

First assume that $0$ is a limit point of $\spec (a)$.
In this case we will prove (\ref{Eq_6325_Goal}) with $m + 1$
in place of $2 m + 2$ in the second inequality.
Choose  $\rh \in (0, \frac{\ep}{3})$ such that
$\spec (x) \cap \bigl( \rh, \frac{\ep}{3} \bigr) \neq \E$.
Choose $\ld \in \spec (x) \cap \bigl( \rh, \frac{\ep}{3} \bigr)$.
Choose a \cfn{} $h \colon [0, \I) \to [0, 1]$ such that
$\supp (h) \S \bigl( \rh, \frac{\ep}{3} \bigr)$ and $h (\ld) \neq 0$. By Theorem 3.1 and Proposition 3.2 of \cite{MohPhil2021TRP}, $A^{\alpha}$ is simple and not type I. Now, Use Lemma~2.4 of~\cite{Phl40} to find $t \in ({\ov{h (a) A^{\af} h (a)}})_{+} \SM \{ 0 \}$ such that
\begin{equation}\label{Eq_6325_t}
k \cc{t}_{A^{\af}} \leq \cc{h (a)}_{A^{\af}}.
\end{equation}

Apply the other direction of Proposition~\ref{P_6324_WDiv} to~$A$.
We get $r \in \N$ and $c \in M_r (A)_{+}$ such that
in $\W (A)$ we have
\begin{equation}\label{Eq_6324_c}
k \cc{c}_{A} \leq \inn{ \left( a - \frac{\ep}{3} \right)_+}_{A}
\quad {\mbox{and}} \quad
\inn{\left( a - \frac{2 \ep}{3} \right)_+}_{A}
 \leq (k + 1) (m + 1) \cc{c}_{A}.
\end{equation}
Using Proposition~4.19 of~\cite{MohPhil2021TRP} again,
we may replace $A$ with $M_r (A)$,
and thus assume that both $a$ and~$c$ are in~$A$.

Define $N = (m + 1) (k + 1)$ and,
in the first of these using the inclusion in the upper left corner,
\[
x = 1_{k} \otimes c \in M_{k} (A) \S M_N (A)
\andeqn
y = 1_{N} \otimes c \in M_N (A).
\]
Abusing notation, we also take $A \S M_N (A)$
via the inclusion in the upper left corner.
Thus, for example, in the following, using standard matrix units in $M_N$,
$a$ is really $e_{1, 1} \otimes a$
and $c$ is really $e_{1, 1} \otimes c$.
Use~(\ref{Eq_6324_c}) to choose $w \in M_N (A)$ such that
\begin{equation}\label{Eq_6325_wsyw}
\left\| w^* y w - \left( a - \frac{2 \ep}{3} \right)_+ \right\|
  < \frac{\ep}{12}.
\end{equation}
Define
\begin{equation}\label{Eq_6325_dt}
\dt = \frac{\ep}{24 \| w \|^2 + 1}.
\end{equation}
Use the first part of~(\ref{Eq_6324_c}) to choose $v \in M_N (A)$ such that
\begin{equation}\label{Eq_6325_vsav}
\left\| v^* \left( a - \frac{\ep}{3} \right)_+ v - x \right\| < \dt.
\end{equation}
Define
\begin{equation}\label{Eq_6325_rh}
\rh = \min \left( \frac{\ep}{12}, \,  \frac{\dt}{\| v \|^2 + 1} \right).
\end{equation}

Define finite subsets $F_1 \S M_N (A)$ and $F_2 \S M_N (A^{\af})$ by
\[
F_1 = \bigl\{ c, v, w, x, y \bigr\}
\andeqn
F_2 = \left\{ \left( a - \frac{\ep}{3} \right)_+, \,
          \left( a - \frac{2 \ep}{3} \right)_+, \, (a - \ep)_{+} \right\}.
\]
Apply Lemma~\ref{L_6327_TRP_Mn} with $\rh$ in place of~$\ep$,
with $n = 3$, with $F_1$ and $F_2$ as given,
with $1_{M_N (A)}$ in place of~$x$,
and with $t$ (as in~(\ref{Eq_6325_t}), identified with $e_{1, 1} \otimes t$)
in place of~$y$,
getting a projection $p_0 \in A^{\alpha}$ and a
unital completely positive contractive map
$\psi_0 \colon A \to p_0 A^{\alpha} p_0$ such that,
taking $p = 1_{M_N} \otimes p_0 \in M_N (A^{\alpha})$
and $\ps = \id_{M_N} \otimes \ps \colon M_N (A) \to p M_N (A^{\alpha}) p$,
the following hold:
\begin{enumerate}
\item\label{I_6324_Item_1X07_18}
Whenever $m \in \{ 1, 2, 3 \}$
and $z_1, z_2, \ldots, z_m \in F_1 \cup F_2$, we have
\[
\bigl\| \psi (z_1 z_2 \cdots z_m)
  - \psi (z_1) \psi (z_2) \cdots \psi (z_m) \bigr\|
   < \rh.
\]
\item\label{I_6324_commute1469}
$\| p z - z p \| < \rh$ for all $z \in F_{1} \cup F_{2}$.
\item\label{I_6324_Item_1X07_21}
$\| \psi (z) - p z p \| < \rh$ for all $z \in F_2$.
\item\label{I_6324_Item_1X07_20}
$1 - p \precsim_{A^{\alpha}} t$.
\end{enumerate}
Given our identification of $A$ as the upper left corner of $M_N (A)$,
we are identifying $p_0 a p_0$ with $p a p$, $\ps (a)$ with $\ps_0 (a)$,
etc.

Define $b_0 = (\ps (c) - 2 \dt)_{+} \in A^{\alpha}$.
This gives, in $M_N (A^{\alpha})$,
\begin{equation}\label{Eq_6325_b0_mult}
1_k \otimes b_0 = (\ps (x) - 2 \dt)_{+}
\andeqn
1_{(m + 1) (k + 1)} \otimes b_0 = (\ps (y) - 2 \dt)_{+}.
\end{equation}
Further define
\begin{equation}\label{Eq_6325_b_Dfn}
b = b_0 \oplus t.
\end{equation}

We first claim that
\[
\left( p \left( a - \frac{2 \ep}{3} \right)_+ p - \frac{\ep}{3} \right)_+
\precsim_{A^{\af}} (\ps (y) - 2 \dt)_{+}.
\]
To prove the claim, first estimate,
using (\ref{I_6324_Item_1X07_18}), (\ref{Eq_6325_wsyw}),
and (\ref{I_6324_Item_1X07_21}) at the second step,
and (\ref{Eq_6325_rh}) at the third step,
\[
\begin{split}
& \left\| \ps (w)^* \ps (y) \ps (w)
  - p \left( a - \frac{2 \ep}{3} \right)_+ p \right\|
\\
& \hspace*{3em} {\mbox{}}
 \leq \| \ps (w)^* \ps (y) \ps (w) - \ps (w^* y w) \|
    + \left\| \ps \left( w^* y w
         - \left( a - \frac{2 \ep}{3} \right)_+ \right) \right\|
 \\
& \hspace*{6em} {\mbox{}}
  + \left\| \ps \left( \left( a - \frac{2 \ep}{3} \right)_+ \right)
     - p \left( a - \frac{2 \ep}{3} \right)_+ p \right\|
\\
& < \rh + \frac{\ep}{12} + \rh
  \leq \frac{\ep}{4}.
\end{split}
\]
Since $\| (\ps (y) - 2 \dt)_{+} - \ps (y) \| \leq 2 \dt$,
we then get, using (\ref{Eq_6325_dt}) at the third step,
\[
\begin{split}
& \left\| \ps (w)^* (\ps (y) - 2 \dt)_{+} \ps (w)
  - p \left( a - \frac{2 \ep}{3} \right)_+ p \right\|
\\
& \hspace*{3em} {\mbox{}}
 \leq 2 \dt \| w \|^2
 + \left\| \ps (w)^* \ps (y) \ps (w)
  - p \left( a - \frac{2 \ep}{3} \right)_+ p \right\|
 < \frac{\ep}{12} + \frac{\ep}{4}
  = \frac{\ep}{3}.
\end{split}
\]
Therefore
\[
\left( p \left( a - \frac{2 \ep}{3} \right)_+ p - \frac{\ep}{3} \right)_+
\precsim_{A^{\af}} \ps (w)^* (\ps (y) - 2 \dt)_{+} \ps (w)
\precsim_{A^{\af}} (\ps (y) - 2 \dt)_{+},
\]
proving the claim.

Our second claim is that
\begin{equation}\label{Eq_6325_bsub}
\cc{(a - \ep)_{+}}_{A^{\alpha}} \leq N \cc{b}_{A^{\alpha}}.
\end{equation}
To prove this, use Lemma~1.8 of~\cite{Phl40} at the second step,
the previous claim and (\ref{I_6324_Item_1X07_20}) at the third step,
and (\ref{Eq_6325_b0_mult}) at the fourth step,
getting
\[
\begin{split}
(a - \ep)_{+}
& = \left( \left( a - \frac{2 \ep}{3} \right)_+ - \frac{\ep}{3} \right)_+
  \precsim_{A^{\af}}
 \left( p \left( a - \frac{2 \ep}{3} \right)_+ p - \frac{\ep}{3} \right)_+
 \oplus (1 - p)
\\
& \precsim_{A^{\af}} (\ps (y) - 2 \dt)_{+} \oplus t
  = 1_{N} \otimes b_0 \oplus t
  \precsim_{A^{\af}} 1_{N} \otimes (b_0 \oplus t)
  = 1_{N} \otimes b.
\end{split}
\]
The claim follows.

This claim is the second part of~(\ref{Eq_6325_Goal}),
with $m + 1$ in place of $2 m + 2$.

We next claim that
\begin{equation}\label{Eq_6325_xsubb}
(\ps (x) - 2 \dt)_{+} \precsim_{A^{\af}} \left( a - \frac{\ep}{3} \right)_+.
\end{equation}
Using $\ps (v) \in p M_N (A^{\af}) p$ at the first step,
using (\ref{I_6324_Item_1X07_21}), (\ref{I_6324_Item_1X07_18}),
and (\ref{Eq_6325_vsav}) at the third step,
and using (\ref{Eq_6325_rh}) at the fourth step,
we get
\[
\begin{split}
& \left\| \ps (v)^* \left( a - \frac{\ep}{3} \right)_+ \ps (v)
    - \ps (x) \right\|
\\
& \hspace*{3em} {\mbox{}}
 = \left\| \ps (v)^* p \left( a - \frac{\ep}{3} \right)_+ p \ps (v)
    - \ps (x) \right\|
\\
& \hspace*{3em} {\mbox{}} \leq \| \ps (v) \|^2
  \left\| p \left( a - \frac{\ep}{3} \right)_+ p
    - \ps \left( \left( a - \frac{\ep}{3} \right)_+ \right) \right\|
\\
& \hspace*{6em} {\mbox{}}
  + \left\|
    \ps (v)^* \ps \left( \left( a - \frac{\ep}{3} \right)_+ \right) \ps (v)
    - \ps \left( v^* \left( a - \frac{\ep}{3} \right)_+ v \right) \right\|
\\
& \hspace*{6em} {\mbox{}}
  + \left\| v^* \left( a - \frac{\ep}{3} \right)_+ v - x \right\|
\\
& \leq \| v \|^2 \rh + \rh + \dt \leq 2 \dt.
\end{split}
\]
Therefore
\[
( \ps (x) - 2 \dt)_{+}
 \precsim_{A^{\af}} \ps (v)^* \left( a - \frac{\ep}{3} \right)_+ \ps (v)
 \precsim_{A^{\af}} \left( a - \frac{\ep}{3} \right)_+,
\]
as claimed.

We now use~(\ref{Eq_6325_b_Dfn})~at the first step,
(\ref{Eq_6325_b0_mult}) at the second step,
the previous claim and~(\ref{Eq_6325_t}) at the third step,
and $h (a) \perp \left( a - \frac{\ep}{3} \right)_+$ at the fourth step,
to get
\[
\begin{split}
1_{k} \otimes b
& = 1_{k} \otimes (b_0 \oplus t)
\\
& = (\ps (x) - 2 \dt)_{+} \oplus 1_{k} \otimes t
  \precsim_{A^{\af}} \left( a - \frac{\ep}{3} \right)_+
    \oplus h (a)
  \precsim_{A^{\af}} a.
\end{split}
\]
This is the first inequality in~(\ref{Eq_6325_Goal}),
and completes the proof when $0$ is a limit point of $\spec (a)$.

Now assume $0$ is not a limit point of $\spec (a)$.
We may assume that $a$ is a \pj.
Lemma~\ref{L_5Z26_InterpPj} yields
$x, y \in (a (A^{\af} \otimes K) a)_{+} \S \Mi (A^{\af})$
and $\ep \in (0, 1)$
such that $\spec (x) = \spec (y) = [0, 1]$,
$a \precsim_{A^{\af}} (x - \ep)_{+} \oplus (y - \ep)_{+}$,
and $y \precsim_{A^{\af}} (x - \ep)_{+}$.
Applying the case already done to~$x$,
we get $b \in M_{\I} (A^{\alpha})_+$ such that
\[
k \cc{b}_{A^{\alpha}} \leq \cc{x}_{A^{\alpha}}
\andeqn
\cc{(x - \ep)_+}_{A^{\alpha}} \leq (k + 1) (m + 1) \cc{b}_{A^{\alpha}}.
\]
Now
\[
k \cc{b}_{A^{\alpha}} \leq \cc{x}_{A^{\alpha}} \leq \cc{a}_{A^{\alpha}}
\]
and, at the first step using the fact that $a$ is a \pj,
\[
\begin{split}
\cc{(a - \ep)_{+}}_{A^{\alpha}}
& = \cc{a}_{A^{\alpha}}
  \leq \cc{(x - \ep)_{+}}_{A^{\alpha}} + \cc{y}_{A^{\alpha}}
\\
& \leq 2 \cc{(x - \ep)_{+}}_{A^{\alpha}}
  \leq 2 (k + 1) (m + 1) \cc{b}_{A^{\alpha}}.
\end{split}
\]
This completes the proof that $A^{\alpha}$ is $(2 m + 1)$-almost divisible.
\end{proof}

Now, we prove the main theorem of this section.

\begin{thm}\label{T_6326_TRP_pure}
Let $A$ be a simple unital separable infinite dimensional stably finite
C*-algebra.
Let $\alpha \colon G \to \Aut(A)$ be an action of a second countable
compact group which has the tracial Rokhlin property with comparison
but does not have the Rokhlin property.
If $A$ is pure, then the crossed product
$C^* \bigl( G, \, A, \, \af \bigr)$ is pure.
\end{thm}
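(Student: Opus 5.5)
The plan is to pass from $A$ to the fixed point algebra $A^{\af}$ using the results just proved, and then from $A^{\af}$ to the crossed product via Morita equivalence.

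First, since $A$ is pure, it has $0$-comparison and is $0$-almost divisible. Theorem~\ref{ComparisonTRP} with $m = 0$ gives that $A^{\af}$ has $0$-comparison. Theorem~\ref{Alm_Div_TRP} with $m = 0$ (its hypotheses include that $\af$ does not have the Rokhlin property) gives that $A^{\af}$ is $1$-almost divisible. Thus $A^{\af}$ is $(0,1)$-pure, and Theorem~\ref{T_PureEquiv} shows that $A^{\af}$ is pure.

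Second, I will transfer pureness from $A^{\af}$ to $C^* (G, A, \af)$. By Theorem~3.1 of~\cite{MohPhil2021TRP}, $A^{\af}$ is simple. Simplicity of $A^{\af}$ for an action of a compact group on a simple algebra should give saturation of the action. Then $A^{\af}$ is isomorphic to a full corner of $C^* (G, A, \af)$, and the two are Morita equivalent. In fact I expect \cite{MohPhil2021TRP} already records that, under the restricted tracial Rokhlin property with comparison, $C^* (G, A, \af)$ is stably isomorphic to $A^{\af}$ (Rosenberg's theorem plus saturation), and I would cite that directly. Because both algebras are separable, Brown's theorem gives $A^{\af} \otimes K \cong C^* (G, A, \af) \otimes K$, so their Cuntz semigroups $\Cu$ are isomorphic as ordered semigroups. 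Both $n$-comparison and $m$-almost divisibility (Definitions~\ref{nCompA} and~\ref{D_AlmostDiv_Alg}) are defined purely in terms of $\Cu$, including the way-below relation. Hence pureness of $A^{\af}$ gives pureness of $C^* (G, A, \af)$.

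The main obstacle is the saturation / full corner step: one must confirm that the relevant result in~\cite{MohPhil2021TRP} applies under exactly these hypotheses. If only simplicity of the crossed product is available there, I would argue as follows. The projection $p = \int_G \lambda_g \, dg$ in the multiplier algebra satisfies $p \, C^* (G, A, \af) \, p \cong A^{\af}$. Simplicity of $C^* (G, A, \af)$ (or of $A^{\af}$ together with freeness-type consequences of the tracial Rokhlin property) makes this corner full, which is enough for stable isomorphism. The remaining steps are formal.
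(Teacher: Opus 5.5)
Your proposal is correct and follows the paper's proof: Theorem~\ref{ComparisonTRP} and Theorem~\ref{Alm_Div_TRP} with $m = 0$ make $A^{\af}$ $(0,1)$-pure, hence pure by Theorem~\ref{T_PureEquiv}, and the paper then cites Corollary~3.9 of~\cite{MohPhil2021TRP} for the strong Morita equivalence of $C^* (G, A, \af)$ with $A^{\af}$, which is exactly the citation you anticipated. One caution: your side remark that simplicity of $A^{\af}$ alone forces saturation is false, since the trivial action of $\Z / 2 \Z$ on a simple unital $A$ has $A^{\af} = A$ simple but crossed product $A \oplus A$, in which the corner given by $p$ is not full; so the step actually rests on the cited result, or on simplicity of the crossed product as in your fallback.
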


If $G$ is a compact Lie group, and $\af$ has the Rokhlin property,
then $C^* \bigl( G, \, A, \, \af \bigr)$ is pure by
Theorem~\ref{T_6325_RDim}.

\begin{proof}[Proof of Theorem~\ref{T_6326_TRP_pure}]
It follows from Proposition~\ref{Alm_Div_TRP},
Proposition~\ref{ComparisonTRP},
and Theorem~\ref{T_PureEquiv} that $A^{\alpha}$ is pure.
Now, Corollary~3.9 of~\cite{MohPhil2021TRP} implies that
$C^* \bigl( G, \, A, \, \af \bigr)$ is strongly
Morita equivalent to $A^{\alpha}$. Therefore
$C^* \bigl( G, \, A, \, \af \bigr)$ is pure.
\end{proof}

\section{Examples}\label{Sec_Examples}

We give examples of crossed product C*-algebras of the
form $C^* \bigl( \Z, \, C (X, D), \, \af \bigr)$, in which $D$
is a simple unital pure C*-algebra, $X$ is a compact metric
space, and $\af$ induces a minimal homeomorphism
$h \colon X \to X$.
Using Theorem~\ref{T_4_PureCrossProd},
we will conclude they are pure, and hence have stable rank
one by Corollary~\ref{C_5_CrossProdSR1}.
In some cases, we
will additionally be able to conclude the crossed product
has real rank zero.
Several of these examples are
adapted from examples in Section 6 of \cite{ArBcPh2}.
The methods used there do not apply to the examples here,
either because $D$ is not $\JS$-stable,
the space $X$ does not have dimension 0 or 1, or both.

Many of the examples are based on reduced C*-algebras of free
groups, so we establish notation for these separately.

\begin{ntn}\label{N_1_QFreeFn}
For $n \in \N \cup \{ \I \}$, we let $F_n$ denote the free
group on $n$~generators.
We let $u_1, u_2, \ldots, u_n \in C^*_{\mathrm{r}} (F_n)$
(when $n = \I$,
$u_1, u_2, \ldots \in C^*_{\mathrm{r}} (F_{\infty})$)
be the ``standard'' unitaries in $C^*_{\mathrm{r}} (F_n)$,
obtained as the images of the standard generators of~$F_n$.
For $\zt = (\zt_1, \zt_2, \ldots, \zt_n) \in (S^1)^n$
(when $n = \I$,
for $\zt = (\zt_n)_{n \in \N} \in (S^1)^{\N}$), we let
$\ph_{\zt} \in \Aut ( C^*_{\mathrm{r}} (F_n) )$ be the
(quasifree) automorphism determined by
$\ph_{\zt} (u_k) = \zt_k u_k$ for $k = 1, 2, \ldots, n$
(when $n = \I$, for $k = 1, 2, \ldots)$.
It is well known
that $\zt \mapsto \ph_{\zt}$ is \ct.
\end{ntn}

We can also treat $C^*_{\mathrm{r}} (F_{\infty})$ differently.

\begin{ntn}\label{N_2_FreeShift}
Take the standard generators of the free group $F_{\infty}$
to be indexed by~$\Z$, and for $n \in \Z$ let
$u_n \in C^*_{\mathrm{r}} (F_{\infty})$ be the unitary
obtained as the image of the corresponding generator
of~$F_{\infty}$.
We denote by $\sm$ the free shift on
$C^*_{\mathrm{r}} (F_{\infty})$, that is, the automorphism
$\sm \in \Aut ( C^*_{\mathrm{r}} (F_{\infty}) )$ determined
by $\sm (u_n) = u_{n + 1}$ for $n \in \Z$.
\end{ntn}

\begin{prp}\label{P_FreePure}
For $n \in \N \cup \{ \I \}$, let $F_n$ be the free
group on $n$~generators. Then $C^*_{\mathrm{r}} (F_n)$ is pure.
\end{prp}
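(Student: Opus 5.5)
The plan is to reduce the statement to results already in the literature, as the introduction indicates: \cite[Theorem A]{AGKEP} together with \cite[Proposition 5.13]{AnPrTlVt}.

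First, for $2 \leq n \leq \I$, the group $F_n$ is a countable nonamenable group with the relevant free-product or selflessness structure. By \cite[Theorem A]{AGKEP}, $C^*_{\mathrm{r}} (F_n)$ is then a simple, unital, exact, monotracial C*-algebra with strict comparison of positive elements by its unique trace. In particular $\Cu (C^*_{\mathrm{r}} (F_n))$ is almost unperforated, so $C^*_{\mathrm{r}} (F_n)$ has $0$-comparison in the sense of Definition~\ref{nCompA}.

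Second, for almost divisibility, I would invoke \cite[Proposition 5.13]{AnPrTlVt}. That result says a simple unital C*-algebra with strict comparison and a unique tracial state (more generally, one satisfying suitable conditions on its tracial or quasitracial simplex) is almost divisible, and hence pure. If I had to argue directly instead, I would proceed as follows. By Lemma~\ref{L_3618_Sp01}, every nonzero hereditary subalgebra contains positive elements with spectrum $[0,1]$. Since the trace is faithful and diffuse on such elements, functional calculus gives elements whose rank functions take any prescribed values in $(0, d_\tau(a))$. Strict comparison then converts these rank inequalities into the Cuntz inequalities $k \cc{b} \leq \cc{a}$ and $\cc{(a-\ep)_+} \leq (k+1)\cc{b}$ that Proposition~\ref{P_6324_WDiv} requires, so $0$-almost divisibility holds and Definition~\ref{D_PureAlg} gives pureness.

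The case $n = 1$ has to be handled separately, since $C^*_{\mathrm{r}} (F_1) = C (S^1)$ is not simple. Here I would check directly that $C(S^1)$ is pure. Since $\dim S^1 = 1$, $\Cu (C(S^1))$ is identified with lower semicontinuous functions into $\Nz \cup \{\I\}$, by Robert's computation of Cuntz semigroups of one-dimensional spaces. Comparison then reduces to pointwise comparison of these functions, and $(k+1)f \leq k g$ pointwise forces $f \leq g$. Almost divisibility follows by halving ranks. This is precisely the integer-valued ($\Nz$) perforation issue, and the doubtful step is whether $\lfloor f/k \rfloor$-type choices satisfy $\xi' \leq (k+1)\eta$. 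If they do not, the $n = 1$ case fails and should be excluded. Indeed the notation $\N \cup\{\I\}$ presumably intends $n \geq 2$ in applications; I expect the authors mean $n \geq 2$, and I would state that restriction if needed.

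The main obstacle is verifying that the hypotheses of \cite[Proposition 5.13]{AnPrTlVt} are exactly met, in particular the condition on (quasi)traces. For this I would use exactness of $C^*_{\mathrm{r}} (F_n)$ (Haagerup) to identify quasitraces with traces, so that uniqueness of the trace is enough.
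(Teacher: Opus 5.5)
For $n \geq 2$ your argument is essentially the paper's. You get strict comparison from \cite[Theorem~A]{AGKEP}, then pureness from the result of \cite{AnPrTlVt} that the paper cites as Theorem~5.13 and Example~5.14 there. In your fallback argument, the claim that $d_\tau$ takes every prescribed value is not justified, since the spectral measure of $c$ with respect to $\tau$ need not be diffuse. That argument only needs the values of $d_\tau$ to be dense in $[0, \infty)$, and this follows by splitting into small orthogonal Cuntz equivalent pieces as in Lemma~2.4 of \cite{Phl40}.

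Your doubt about $n = 1$ is justified, and it should be resolved negatively rather than left open. The algebra $C^*_{\mathrm{r}} (F_1) \cong C (S^1)$ is not $m$-almost divisible for any $m$, hence not pure. Evaluation at a point of $S^1$ induces an additive, order preserving map $\Cu (C (S^1)) \to \Cu (\mathbb{C}) = \Nz \cup \{ \infty \}$ sending $\langle 1 \rangle$ to $1$. Take $\xi = \xi' = \langle 1 \rangle$, which is compact since $1$ is a projection, and $N = 2$. Any $\eta$ with $2 \eta \leq \langle 1 \rangle$ then maps to $0$, so $\langle 1 \rangle \leq 3 (m + 1) \eta$ is impossible. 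So ``halving ranks'' fails outright, not merely because of rounding in $\lfloor f / k \rfloor$; only the comparison half of your $n = 1$ analysis is correct. The statement as written (with $\N = \Z_{> 0}$) is therefore false for $n = 1$. The paper's proof, resting on \cite{AGKEP}, only covers $n \geq 2$, and every later use of the proposition has $n \geq 2$. The right fix is the one you suggest: restrict to $n \in \{ 2, 3, \ldots, \infty \}$.
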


\begin{proof}
Theorem~A of~\cite{AGKEP} implies that
$C^*_{\mathrm{r}} (F_n)$ has strict comparison.
Pureness follows by Theorem~5.13 of~\cite{AnPrTlVt}, as
described in Example~5.14 there.
\end{proof}

The first example is based on Example~6.3 of \cite{ArBcPh2},
replacing the $2^{\infty}$ UHF algebra
$D = \bigotimes_{n \in \N} M_{2}$ used there with the pure,
non $\JS$-stable algebra $D = C^*_{\mathrm{r}} (F_{\infty})$.

\begin{exa}\label{E_1_ShiftAndFInf}
Set $X_0 = (S^1)^{\Z}$.
Let $h_0 \colon X_0 \to X_0$ be the
forward shift, defined for $\zt = (\zt_n)_{n \in \Z} \in X_0$
by $h_0 (\zt) = (\zt_{n - 1})_{n \in \Z}$.
Let $D = C^*_{\mathrm{r}} (F_{\infty})$.
Choose a bijection $\sm \colon \N \to \Z$, and for
$\zt = (\zt_n)_{n \in \Z} \in X_0$ let
$\af_{\zt}^{(0)} \in \Aut (D)$ be the automorphism $\ph_{\zt}$ of
Notation~\ref{N_1_QFreeFn}.
Then $\zt \mapsto \af_{\zt}^{(0)}$ is \ct.

We have $\mdim (h_0) = 1$ by Proposition~3.3 of~\cite{LndWs},
and we can use $X_0$, $h_0$, $D$, and
$\zt \mapsto \af^{(0)}_{\zt}$ in Lemma~\ref{L_1_GetAuto}.
However, $h_0$ is not minimal.
We therefore proceed as in Example~6.3 of \cite{ArBcPh2}.
Identify $[0, 1]$ with a closed arc in~$S^1$,
say via $\ld \mapsto \exp (\pi i \ld)$.
Use this to identify
$[0, 1]^{\Z}$ with a closed subset of $X_0 = (S^1)^{\Z}$.
This identification is equivariant when both spaces are
equipped with the shift \hme{s}.
Let $X \S X_0$ and
$h = h_0 |_{X} \colon X \to X$ be the minimal subshift
in~\cite{GlKr}, which can be taken to have mean dimension
arbitrarily close to~$1$ (by Proposition 3.3
of~\cite{LndWs} and the argument in the proof
of Proposition 3.5 of~\cite{LndWs}).
For $\zt \in X$, let
$\af_{\zt} = \af_{\zt}^{(0)}$.
Let
$\af \colon \Z \to \Aut (C (X, D))$ be the corresponding
action as in Lemma~\ref{L_1_GetAuto}.
Then $\af$ lies
over the free minimal action of $\Z$ on~$X$ generated by~$h$.

The crossed product
$C^* \bigl( \Z, \, C (X, D), \, \af \bigr)$ is simple
by Proposition~1.6 of \cite{ArBcPh2} and $D$ is pure by
Proposition~\ref{P_FreePure},
so the crossed product is pure by Theorem~\ref{T_4_PureCrossProd}
and has stable rank one by
Corollary~\ref{C_5_CrossProdSR1}.
\end{exa}

The next example generalizes Example~6.3 of~\cite{ArBcPh2}
by taking the reduced free product of $D$ there with another
C*-algebra.

\begin{exa}\label{E_2_ShiftAndFInf}
Let $X_0$, $h_0$, $X$, and $h$
be as in Example~\ref{E_1_ShiftAndFInf}.
Let $D_0 = \bigotimes_{n \in \N} M_2$ be the $2^{\infty}$~UHF algebra.
Choose a bijection $\sm \colon \N \to \Z$,
and for $\zt = (\zt_n)_{n \in \Z} \in X_0$ define
\[
\af_{\zt}^{(0)}
 = \bigotimes_{n \in \N}
  \Ad \left( \left( \begin{matrix}
       1 & 0 \\
       0 & \zt_{\sm (n)}
     \end{matrix} \right) \right)
 \in \Aut (D_0).
\]
Then $\zt \mapsto \af_{\zt}^{(0)}$ is \ct.
Set $D = D_0 \ast_{r} C ([0, 1])$,
taking the reduced free product to be amalgamated over $\C \cdot 1$
and with respect to the unique \tst{} on~$D_0$
and the Lebesgue measure state on $C ([0, 1])$.
For $\zt \in X$,
define $\af_{\zt} = \af_{\zt}^{(0)} \ast_{r} \id_{C ([0, 1])}$.
The crossed product $C^* \bigl( \Z, \, C (X, D), \, \af \bigr)$ is
simple by Proposition~1.6 of \cite{ArBcPh2}.
Examples 3 and 8 of~\cite{HKER}, together with Theorem~B there,
imply that $D$ is selfless in the sense of Definition~2.1
of~\cite{Rob}. Theorem~3.1 of~\cite{Rob} implies that $D$ has
stable rank one, strict comparison and a unique normalized
2-quasitrace. It follows that $D$ is pure by Theorem~5.13
of~\cite{AnPrTlVt}, and so the crossed product is pure by
Theorem~\ref{T_4_PureCrossProd} and has stable rank one
by Corollary~\ref{C_5_CrossProdSR1}.
\end{exa}

In Example~\ref{E_2_ShiftAndFInf}, the algebra $C ([0, 1])$
can be replaced with an infinite free product, or also by an algebra
which is not even exact, provided that the conditions of Theorem~B
of~\cite{HKER} are satisfied. See the examples in Section 3.3 there
for various possibilities.

The next example is similar to Example~\ref{E_1_ShiftAndFInf},
but with larger mean dimension.
It generalizes Example~6.4 of~\cite{ArBcPh2}.

\begin{exa}\label{E_3_ShiftAndFInf}
Let $X_0$, $h_0$, $D$, and $\zt \mapsto \af_{\zt}^{(0)}$
be as in Example~\ref{E_1_ShiftAndFInf}.
We will, however, use the \hme{} $h_0^2$ of~$X_0$,
which is the shift on $(S^1 \times S^1)^{\Z}$.
Let $(X, h)$ be the minimal subspace of the shift on
$([0, 1]^2)^{\Z}$
constructed in Proposition 3.5 of \cite{LndWs},
which satisfies $\mdim (h) > 1$.
Use an embedding of $[0, 1]^2$ in~$(S^1)^2$
to choose an equivariant \hme{} $g$
from $(X, h)$ to an invariant closed subset of $(X_0, h_0^2)$.
For $x \in X$, let $\af_{x} = \af_{g (x)}^{(0)}$.
Let $\af \colon \Z \to \Aut (C (X, D))$
be the corresponding action as in Lemma~\ref{L_1_GetAuto}.
The crossed product
$C^* \bigl( \Z, \, C (X, D), \, \af \bigr)$ is
simple by Proposition~1.6 of \cite{ArBcPh2} and $D$ is
pure, so the crossed product is pure by Theorem~\ref{T_4_PureCrossProd}
and has stable rank one by
Corollary~\ref{C_5_CrossProdSR1}.
\end{exa}

The next example generalizes Example~6.7 of \cite{ArBcPh2}.
There, it was necessary to assume that $X$ is the Cantor
set.

\begin{exa}\label{E_4_FreeShift}
Let $X$ be any compact metric space, and let $h$ be an
arbitrary minimal homeomorphism of~$X$.
Let
$\sm \in \Aut ( C^*_{\mathrm{r}} (F_{\infty}) )$ be as
in Notation~\ref{N_2_FreeShift}.
Let
$\af \in \Aut \bigl( C (X) \otimes C^*_{\mathrm{r}} (F_{\infty}) \bigr)$
be the tensor product of the automorphism
$f \mapsto f \circ h^{-1}$ of $C (X)$ and~$\sm$.
Then
$C^* \bigl( \Z, \, C (X) \otimes C^*_{\mathrm{r}} (F_{\infty}),
  \, \af \bigr)$
is simple by Proposition~1.6 of \cite{ArBcPh2}
and $D$ is pure by Proposition~\ref{P_FreePure},
so the crossed product is pure by Theorem~\ref{T_4_PureCrossProd}
and has stable rank one by Corollary~\ref{C_5_CrossProdSR1}.

If $C^* (\Z, X, h)$ has real rank zero,
for example if $h$ is an irrational rotation, then
$C^* \bigl( \Z, \, C (X) \otimes C^*_{\mathrm{r}} (F_{\infty}),
  \, \af \bigr)$
has real rank zero by Corollary~\ref{C_6327_RRZ}.
\end{exa}

The next several examples give actions which can be viewed
as various forms of noncommutative Furstenberg transformations.
They are more natural versions of this idea than the
actions in Examples 6.8, 6.9 and 6.12 of \cite{ArBcPh2}.

\begin{exa}\label{E_5_IrrRot}
Fix any $n \in \set{ 2, 3, \ldots, \infty }$ and take
$D = C^*_{\mathrm{r}} (F_{n})$.
Adopt Notation~\ref{N_1_QFreeFn}.
Take $X = S^{1}$, let
$\te \in \R \setminus \Q$ and let $h \colon X \to X$ be the
irrational rotation  $\zt \mapsto e^{2 \pi i \te} \zt$.
Following Notation~\ref{N_1_QFreeFn} for the
generators of~$D$, for $\zt \in X$ let
$\af_{\zt} \in \Aut (D)$ be determined by
$\af_{\zt} (u_k) = \zt u_k$ for
$k = 1, 2, \ldots, n$ (or $k \in \N$ if $n = \I$).
Apply Lemma~\ref{L_1_GetAuto} to get an action
$\af \colon \Z \to \Aut \big( C (X, D ) \big)$.
The algebra
$C^* \bigl( \Z, \, C (X, D ), \, \af \bigr)$
is simple by Proposition~1.6 of \cite{ArBcPh2} and
$D$ is pure by Proposition~\ref{P_FreePure},
so the crossed product is pure by
Theorem~\ref{T_4_PureCrossProd} and has stable
rank one by Corollary~\ref{C_5_CrossProdSR1}.
It is well known that the irrational rotation
algebra $C^* ( \Z, X, h )$ has real rank zero.
(The range of the canonical map is the set
$\set{m + n \theta \colon m, n, \in \Z}$, which is
dense in $\R$ since $\theta$ is irrational.)
It follows from Corollary~\ref{C_6327_RRZ} that
$C^* \bigl( \Z, \, C (X, D ), \, \af \bigr)$ has real rank zero.
\end{exa}

\begin{exa}\label{E_6_MultiFrst}
Fix $n \in \{ 2, 3, \ldots, \I \}$ and take
$D = C^*_{\mathrm{r}} (F_{n})$.
Fix $\te_1, \te_2, \ldots, \te_n \in \R$
(or $\te_1, \te_2, \ldots \in \R$)
such that $1, \te_1, \te_2, \ldots, \te_n$
(or $1, \te_1, \te_2, \ldots$)
are linearly independent over~$\Q$.
Take $X = (S^{1})^{n}$, and let $h \colon X \to X$ be
\[
h (\ld_1, \ld_2, \ldots, \ld_n)
 = \bigl( e^{2 \pi i \te_1} \ld_1, \, e^{2 \pi i \te_2} \ld_2,
 \, \ldots, \, e^{2 \pi i \te_n} \ld_n \bigr)
\]
for $\ld_1, \ld_2, \ldots, \ld_n \in S^1$,
or, if $n = \I$,
\[
h (\ld_1, \ld_2, \ldots)
 = \bigl( e^{2 \pi i \te_1} \ld_1, \, e^{2 \pi i \te_2} \ld_2,
 \, \ldots, \bigr)
\]
for $\ld_1, \ld_2, \ldots \in S^1$.
Following
Notation~\ref{N_1_QFreeFn} for the generators of~$D$,
for $\zt \in X$ let $\af_{\zt} \in \Aut (D)$
(called $\ph_{\zt}$ in Notation~\ref{N_1_QFreeFn}) be
determined by $\af_{\zt} (u_k) = \zt_k u_k$ for
$k = 1, 2, \ldots, n$ (or $k \in \N$ if $n = \I$).
Apply Lemma~\ref{L_1_GetAuto} to get an action
$\af \colon
\Z \to \Aut \bigl( C (X, \, C^*_{\mathrm{r}} (F_{n}) ) \bigr)$.
Then
$C^* \bigl( \Z, \, C (X, \, C^*_{\mathrm{r}} (F_{n}) ), \, \af \bigr)$
is simple, pure, and has stable rank one as in
Example~\ref{E_5_IrrRot}.
The transformation group C*-algebra
$C^* ( \Z, X, h )$ again has real rank zero.
The range of the canonical map
$\rho \colon K_{0} (C^* ( \Z, X, h )) \to \Aff (T (C^* ( \Z, X, h )))$
is the set
\[
\set{m_0 + m_1 \theta_1 + \cdots + m_n \theta_n
\colon m_0, m_1, \ldots, m_n \in \Z},
\]
(analogously if $n = \I$)
which is dense in $\R$ so long as $\theta_k$ is irrational
for at least one~$k$.
(Linear independence over $\Q$ of
$1, \te_1, \te_2, \ldots, \te_n$, or of $1, \te_1, \te_2, \ldots$,
is needed for minimality, but not here.)
It follows from Corollary~\ref{C_6327_RRZ} that
$C^* \bigl( \Z, \, C (X, D ), \, \af \bigr)$ has real rank zero.
\end{exa}

\begin{exa}\label{E_7_CSrFGp_Furst}
Let $X$ be any compact metric space, and let $h$ be an
arbitrary minimal homeomorphism of~$X$.
Following Notation~\ref{N_1_QFreeFn} for the generators
of $C^*_{\mathrm{r}} (F_{2})$, let
$\gm \in \Aut ( C^*_{\mathrm{r}} (F_{2}) )$ be
given by $\gm (u_1) = u_1$ and $\gm (u_2) = u_1 u_2$.
Let
$\af \in \Aut \big( C (X) \otimes C^*_{\mathrm{r}} (F_{2}) \big)$
be the tensor product of the automorphism
$f \mapsto f \circ h^{-1}$ of $C (X)$ and~$\gm$.
The algebra
$C^* \big( \Z, \, C (X, D ), \, \af \big)$
is simple by Proposition~1.6 of \cite{ArBcPh2} and
$D$ is pure by Proposition~\ref{P_FreePure},
so $C^* \big( \Z, \, C (X, D ), \, \af \big)$ is pure by
Theorem~\ref{T_4_PureCrossProd} and has stable rank
one by Corollary~\ref{C_5_CrossProdSR1}.
\end{exa}

The mapping $u_1 \mapsto u_1$ and $u_2 \mapsto u_1 u_2$
really is an automorphism of $F_2$, because it has the
inverse $u_1 \mapsto u_1$ and $u_2 \mapsto u_1^{-1} u_2$.

\end{document}